\documentclass[12pt]{amsart}
\usepackage{fullpage}

\input xy
\xyoption{all}

\newcommand{\ord}{\mbox{ord}}
\newcommand{\Gal}{\mbox{Gal}}

\newcommand{\id}{\mbox{id}}

\newcommand{\rank}{\mbox{rank}}

\usepackage{color, graphicx}
\usepackage{amsmath}
\usepackage{amssymb}
\usepackage{comment}

\usepackage[T1]{fontenc}
\usepackage{calrsfs}

\theoremstyle{plain}

\newtheorem{theorem}{Theorem}[section]
\newtheorem{lemma}[theorem]{\bf Lemma}
\newtheorem{corollary}[theorem]{\bf Corollary}
\newtheorem{proposition}[theorem]{\bf Proposition}

\theoremstyle{definition}
\newtheorem{definition}[theorem]{\bf Definition}
\newtheorem{question}[theorem]{\bf Question}

\newtheorem{example}[theorem]{\bf Example}

\theoremstyle{remark}
\newtheorem{remark}[theorem]{\bf Remark}

\newtheorem{notation}[theorem]{\bf Notation}
\newtheorem{notationassumptions}[theorem]{\bf Notation and Assumptions}

\newcommand{\calA}{{\mathcal A}}

\newcommand{\calC}{{\mathcal C}}

\newcommand{\calP}{{\mathcal P}}
\newcommand{\calQ}{{\mathcal Q}}

\newcommand{\calS}{{\mathcal S}}

\newcommand{\calW}{{\mathcal W}}

\newcommand{\calZ}{{\mathcal Z}}

\newcommand{\C}{{\mathbb C}}

\newcommand{\Q}{{\mathbb Q}}
\newcommand{\R}{{\mathbb R}}
\newcommand{\Z}{{\mathbb Z}}

\newcommand{\pp}{{\mathfrak p}}
\newcommand{\rr}{{\mathfrak r}}

\newcommand{\qq}{{\mathfrak q}}

\newcommand{\nn}{{\mathfrak n}}
\newcommand{\dd}{{\mathfrak d}}

\newcommand{\ttt}{{\mathfrak t}}
\newcommand{\aaa}{{\mathfrak a}}

\newcommand{\be}{\begin{enumerate}}
\newcommand{\ee}{\end{enumerate}}

\newcommand{\inff}{{\mbox{\small inf}}}

\def\ord{\mathop{\mathrm{ord}}\nolimits}
\def\inff{\mathop{\mathrm{inf}}\nolimits}

\usepackage[OT2,OT1]{fontenc}
\newcommand\cyr{%
\renewcommand\rmdefault{wncyr}%
\renewcommand\sfdefault{wncyss}%
\renewcommand\encodingdefault{OT2}%
\normalfont
\selectfont}
\DeclareTextFontCommand{\textcyr}{\cyr}


\begin{document}
\bibliographystyle{plain}%
 \title{First Order Decidability and Definability of Integers in Infinite Algebraic Extensions of the Rational Numbers}%
\author{Alexandra Shlapentokh}%
\thanks{The research for this paper has been partially supported by NSF grants DMS-0650927 and DMS-1161456,  and a grant from John Templeton Foundation.  The author would also like to thank Carlos Videla for helpful comments.}
\address{Department of Mathematics \\ East Carolina University \\ Greenville, NC 27858}%
\email{shlapentokha@ecu.edu }
\urladdr{www.personal.ecu.edu/shlapentokha} \subjclass[2000]{Primary 11U05; Secondary 11G05} \keywords{Hilbert's Tenth
Problem, Diophantine definition, First-Order Definition}

\date{\today}

\maketitle

\bigskip

\bigskip

\begin{abstract}
We extend results of Videla and Fukuzaki to define algebraic integers in large classes of infinite algebraic extensions of $\Q$ and use these definitions for some of the fields to show the first-order undecidability. We also obtain a structural sufficient condition for definability of the ring of integers over its field of fractions.  In particular, we show that the following propositions hold.  (1) For any rational prime $q$  and any positive rational integer $m$, algebraic integers are definable in any Galois extension of $\Q$ where the degree of any finite subextension is not divisible by $q^{m}$. (2)  Given a prime $q$, and an integer $m>0$,  algebraic integers are definable in a cyclotomic extension (and any of its subfields) generated by any set $\{\xi_{p^{\ell}}| \ell \in \Z_{>0}, p \not=q \mbox{ is any prime such that } q^{m +1}\not | (p-1)\}$.  (3) The first-order theory of any abelian extension of $\Q$ with finitely many  ramified rational primes is undecidable.  We also show that under a condition on the splitting of one rational prime in an infinite algebraic extension of $\Q$, the existence of a finitely generated elliptic curve over the field in question is enough to have a definition of $\Z$ and to show that the field is indecidable. 

\end{abstract}%

\section{Introduction}
The purpose of this paper is to consider the following problems of definability and decidability for an infinite algebraic extension $K_{\inff}$ of $\Q$. 
 \begin{question}
 \label{def}
 Is the ring of integers of $K_{\inff}$ first-order definable over $K_{\inff}$?
 \end{question}
 \begin{question}
 \label{decide}
 Is the first-order theory of $K_{\inff}$ decidable?
 \end{question} 
 The questions of this type have a long history, especially as applied to number fields and in connection to generalizations of Hilbert's Tenth Problem.  We will not attempt to give a full accounting of the work done in the subject here but will limit ourselves to pointing out some surveys as well as results specifically relevant to this paper.  

Perhaps a good place to start is with the results of J. Robinson who proved in \cite{Rob1} and \cite{Rob2} that in any number field the ring of integers of the number field as well as the ring of rational integers are first-order definable in the language of rings, and therefore the first-order theory of these fields (in the language of rings) is undecidable.  In the process of proving these results J. Robinson also proved that integrality at a prime of a number field is existentially definable in the language of rings over a number field.  In \cite{Rob3} J. Robinson produced a uniform definition of $\Z$ over rings of integers of number fields,   R. Rumely in \cite{Rum} improved J. Robinson's result making the definition of the ring of integers over number fields uniform across fields.  More recently, B. Poonen in \cite{Po4} and J. Koenigsmann in \cite{Koenig2}
updated J. Robinson's definition of integers by reducing the number of universal quantifiers used in these definitions, B.  Poonen to two and J. Koenigsmann to one.  

The desire to reduce the number of universal quantifiers is motivated to large extent by the interest in extending Hilbert's Tenth Problem to $\Q$.  This would be accomplished by a purely existential definition of $\Z$ over $\Q$.  Unfortunately there are serious doubts as to whether such a definition exists.  See \cite{Dencollection},  \cite{Po7} and  \cite{Sh35} for surveys on Hilbert's Tenth Problem and related questions of definability.

A lot of work aiming to prove the decidability of the first-order theory has centered around various infinite extensions of $\Q$. (See \cite{Dencollection} for a survey of these results.) One of the more influential results was arguably due to R. Rumely in \cite{Rum1}, where he showed that Hilbert's Tenth Problem is decidable over the ring of all algebraic integers.  This result was strengthened by L. van den Dries proving in \cite{Dries4} that the first-order theory of this ring was decidable.  Another remarkable result is due to M. Fried, D. Haran and H. V{\"o}lklein in \cite{Fried3}, where it is shown that the first-order theory of the field of all totally real algebraic numbers is decidable.  This field constitutes a boundary of sorts between the ``decidable'' and ``undecidable'', since J. Robinson showed in \cite{Rob3}  that the first-order theory of the ring of all totally real integers is undecidable. In the same paper, she also proved that the first-order theory of a family of totally real rings of integers is undecidable and produced a ``blueprint'' for such proofs over rings of integers which are not necessarily totally real.  

Using some ideas of J. Robinson, an elaboration of J. Robinson's ``blueprint'' by C. W. Henson (see page 199 of \cite{Dries4}), and R. Rumely's method for defining integrality at a prime, C. Videla produced the first-order undecidability results for a family of infinite algebraic extensions of $\Q$ in \cite{V1}, \cite{V2} and \cite{V3}.  More specifically, C. Videla showed that the first-order theory of some totally real infinite quadratic extensions, any infinite cyclotomic extension with a single ramified prime, and some infinite cyclotomic extensions with finitely many ramified primes is undecidable.  C. Videla also produced the first result concerning definability of the ring of integers over an infinite algebraic extension of $\Q$ by generalizing a technique of R. Rumely: he showed that if all finite subextensions are of degree equal to a product of powers of a fixed (for the field) finite set of primes, then the ring of integers is first-order definable over the field.

In a recent paper \cite{Fuk}, K. Fukuzaki, generalizing further R. Rumely's method, proved that a ring of integers is definable over an infinite Galois extension of the rationals such that every finite subextension has odd degree over the rationals and its prime ideals dividing 2 are unramified.  He then used one of the results of J. Robinson to show that a large family of totally real fields contained in cyclotomics (with infinitely many ramified primes) has an undecidable first-order theory.

\section{The statements of new results and overview of the proofs}
The results of this paper can be divided into two categories: definability results, more specifically defining rings of integers and $\Z$ over infinite extensions,  and undecidability results for infinite extensions.  We discuss our new definability results first.
\subsection{The new definability results: $q$-boundedness} 
For the purposes of our discussion we fix an algebraic closure $\tilde \Q$ of $\Q$ and consider a progression from $\Q$ to its algebraic closure, first  through the finite extensions of $\Q$, next  through its infinite extensions fairly ``far''  from the algebraic closure, and finally through the infinite extensions of $\Q$ fairly ``close'' to $\tilde \Q$. 

 As one gets closer to $\tilde \Q$, there is an expectation that the language of rings would loose more and more of its expressive power.  It would be interesting to describe the mile posts signifying various stages of this loss.
A definitive description of these mile posts is probably far away, but in this paper we consider a candidate for an early mile post for the loss of definability, the loss of what we called ``$q$-boundedness'' for all rational primes $q$.  

The formal description of $q$-boundedness is in  Definition \ref{unbounded}.  We offer an informal sketch here.  Given an infinite algebraic extension $K_{\inff}$ of $\Q$ we consider what happens to the local degrees of primes over $\Q$ as we move through the factor tree within $K_{\inff}$.  A rational prime $p$  is called {\it $q$-bounded} if it lies on a path through the factor tree in $K_{\inff}$ where the local degrees of its factors over $\Q$ are {\bf not} divisible by arbitrarily high powers of $q$.  If every descendant of $p$ in every number field contained in $K_{\inff}$ has the same property, then we say that $p$ is hereditarily $q$-bounded.  

For  $q$ itself we require a stronger condition:  the local degrees along all the paths of the factor tree should have uniformly bounded order at $q$.  If this condition is satisfied, we say that $q$ (or some other prime in question) is completely {\it $q$-bounded}.  If all the primes $p \not =q$ are hereditarily $q$-bounded and $q$ is competely $q$-bounded, we say that the field $K_{\inff}$ itself is $q$-bounded, and  we show in Theorem \ref{thm:maindef} that the ring of integers is definable in such a field.  Rings of integers are also definable under some modifications of the $q$-boundedness assumptions, such as an assumption that all primes $p \not =q$ are hereditarily $q$-bounded and $q$ is completely $t$-bounded for some prime $t \not=q$, etc.  We give a sample of results of this type in Theorems \ref{thm:maindef} and \ref{general}.  We also show that one can leverage the $q$-unbounded primes for the purposes of definability, i.e. to define rings where only $q$-unbounded primes can appear in the denominator. (See Theorem \ref{unbounded}.)  

Below we explain what new fields our results cover, but perhaps the most important aspect of our definability result is the structural one.  We suspect that $q$-boundedness or a similar condition, e.g. a somewhat more general condition described in Theorem \ref{general}, is  necessary for definability of the ring of integers.  While non-definability examples are scarce over infinite extensions, we offer the following ones: the field of all totally real numbers, not satisfying the assumptions of Theorem \ref{general}, and decidable by the result of M. Fried, H. V\"{o}lkline, and D. Haran (\cite{Fried3}), has the ring of integers  not definable over the field, since it is undecidable by a result of J. Robinson (\cite{Rob2}).  Further, the field of real algebraic number also does not satisfy the assumptions of Theorem \ref{general}, and its ring of integers is not definable over the field by a result of A. Tarski (\cite{Tar}).
\subsection{Results of C. Videla and K. Fukuzaki considered within the $q$-bounded framework}
Proceeding chronologically, we reconsider results of C. Videla first.   As mentioned above, his results concerned infinite Galois extensions  of number fields, where all the finite subextensions are of degree divisible only by primes belonging to a fixed {\it finite} set of primes $A$.  Consequently, in the fields considered by C. Videla all the primes are completely $q$-bounded for  any $q \not \in A$, and thus all these fields are certainly $q$-bounded. 

 The first natural extension of C. Videla's result, obtainable from our work, is the proposition that the integers are definable in any Galois extension where all the finite subextensions  have degree  {\it not} divisible by a {\it single} prime $q$ (while results of C. Videla prohibit divisibility of the degrees by all but finitely many primes).    Further we can allow finitely many subextensions to be divisible by $q$.  For this reason, while C. Videla could show that the ring of algebraic integers is definable in any cyclotomic extension of $\Q$ with finitely many ramified primes, we can show that the ring of integers is definable in a larger class of cyclotomic extensions, including extensions with infinitely many ramified primes.  For example, for any rational prime $q$ and any  $m \in \Z_{>0}$ we can adjoin to $\Q$ all $\ell^n$-th roots of unity for any positive integer $n$ and for any rational prime $\ell$ such that $q^m$ does not divide $\ell-1$.

	Turning our attention to K. Fukuzaki we note that all the fields he considers are 2-bounded.   Further, K. Fukuzaki does not allow any ramification of dyadic ideals and no finite subextensions  of even degrees, options we can allow even if we just consider 2-bounded fields.  Thus, again as described above, K. Fukuzaki's results allow him to consider some totally real subfields of cyclotomics with infinitely many ramified primes but not the  cyclotomics themselves.

Further, both C. Videla and K. Fukuzaki consider only Galois extensions, a restriction we do not require.  Many more examples of $q$-bounded fields, some natural and some less so can be found in Section \ref{examples}.  Among a set of natural examples not covered by earlier work are non-Galois fields that are towers of finite subextensions of degrees less than $m$ for some positive integer $m$.  We should also note that the family of fields we consider is closed under any finite extension, a property not shared by the fields considered by earlier researchers in the area.
 Finally all our definability results are proved more generally for the rings of $\calS$-integers for an arbitrary finite $\calS$, with empty $\calS$ corresponding the ring of integers.
\subsection{Overview of the construction of our definition of integers}
The central part of our construction is a norm equation which has no solutions if a field element in question has ``forbidden'' poles.  (In an effort to simplify terminology we transferred some function field terms   to this number field setting.)   While we are far from being the only or the first practitioners of this method which originates with J. Robinson and R. Rumely, we do employ  a unique,  to our knowledge,  variation of it.  More specifically, as explained below, we do not fix the top or the bottom field in the norm equation, but allow these fields to vary depending on the elements involved.  As long as the degree of all extensions involved is bounded, such a ``floating'' norm equation is still (effectively) translatable into a system of polynomial equations over the given field.  (See the proof of Theorem \ref{thm:maindef} for the description of this translation).   

To set up the norm equation, let 
\begin{itemize}
\item $q$ be a rational prime number,
\item $K$ be a number field containing a primitive $q$-th root of unity,
\item $\pp_K$ be a prime of $K$ not dividing $q$, 
\item  $b \in K$ be such that $\ord_{\pp_K}b=-1$,
\item $c \in K$ be such that $c$ is integral at $\pp_K$ and is not a $q$-th power in the residue field of $\pp_K$,
\end{itemize}
 and consider $bx^q+b^q$.  Note that $\ord_{\pp_K}(bx^q +b^q)$ is divisible by $q$ if and only if $\ord_{\pp_K}x \geq 0$.  Further, if $x$ is an integer,  all the poles of $bx^q+b^q$ must be poles of $b$ and are divisible by $q$.    Assume also that all  zeros of $bx^q+b^q$ and all zeros and poles of $c$ are of orders divisible by $q$ and $c \equiv 1 \mod q^3$.  Finally, to simplify the situation further, assume that either $K$ has no real embeddings into $\tilde \Q$ or $q >2$.  Now consider the norm equation
\begin{equation}
\label{norm:prelim}
{\mathbf N}_{K(\sqrt[q]{c})/K}(y)=bx^q+b^q.
\end{equation}
Since $\pp_K$ does not split in this extension, if $x$ has a pole at $\pp_K$, then $\ord_{\pp_K}bx^q+b^q \not \equiv 0 \mod q$, and the norm equation has no solution $y$ in $K(\sqrt[q]{c})$.  Further, if $x$ is an integer, given our assumptions, using the Hasse Norm Principle we can show that this norm equation does have a solution. Our conditions on $c$ insure that the extension is unramified, and  our conditions on $bx^q+b^q$ in the case $x$ is an integer make sure that locally {\it at every prime not splitting in the extension} the element $bx^q+b^q$ is equal to  a $q$-th power of some element of the local field times a unit.  By the Local Class Field Theory, this makes  $bx^q+b^q$ a norm locally at every prime.

For an arbitrary $b$ and $c\equiv 1 \mod q^3$ in $K$, we will not necessarily have  all  zeros of $bx^q+b^q$ and all zeros and poles of $c$ of orders divisible by $q$.   For this reason, given $x, b, c \in K$ we consider our norm equation in a finite extension $L$ of $K$ and this extension $L$ depends on $x, b, c$ and $q$.  We choose $L$ so that all  primes occurring as zeros of $bx^q+b^q$ or as  zeros or poles of $c$ are ramified with ramification degree divisible by $q$.   We also take care to split  $\pp_K$ completely in $L$, so that in $L$ we still have that $c$  is not a $q$-th power modulo any factor of $\pp_L$.  This way, as we run through all $b, c \in K$ with $c-1 \equiv  0 \mod q^3$, we ``catch'' all the primes  that do not divide $q$ and occur as poles of $x$.  The construction of the field $L$ and the argument concerning the properties of the primes in question in this field are in Propositions \ref{prop:badprime} and \ref{prop:fixorder}. 

Unfortunately, we will not catch factors of $q$ that may occur as poles in this manner, because our assumption on $c$ forces all the factors of $q$ to split into distinct factors in the extension.   Splitting factors of $q$ into distinct factors protects us from a situation where such primes may ramify and cause the norm equation not to have solutions even when $x$ is an integer.  Elimination of factors of $q$ from the denominators of the divisors of the elements of the rings we define will be done separately. 

The end result of this construction, described in detail in  Section \ref{Algebraic}, is essentially a uniform definition of the form $\forall \forall \exists \ldots \exists$  of the ring of $\calQ$- integers, with $\calQ$ containing factors of $q$, across all number fields containing the $q$-th primitive roots of unity.   

Putting aside for the moment the issue of defining the set of all elements $c$ integral at $q$ and equivalent to 1 mod $q^3$, and the related issue of defining integrality at factors of $q$ in general, we now make the transition to an infinite $q$-bounded extension $K_{\inff}$ by noting the following.  Let $K \subset K_{\inff}$, let $\pp_K$ be a prime of $K$ such that $\pp_K$ does not divide $q$, let $x \in K$ and let $\ord_{\pp_K}x <0$.  Since by assumption $\pp_K$ is $q$-bounded, it lies along a path in its factor tree within $K_{\inff}$, where the order at $q$ of local degrees eventually stabilizes.  To simplify the situation once again, we can assume that it stabilizes immediately past $K$.  So let $N$ be another  number field with $K \subset N \subset K_{\inff}$.  In this case for some prime $\pp_N$ above $\pp_K$ in $N$, we have that $\ord_q e(\pp_N/\pp_K)=\ord_qf(\pp_N/\pp_K)=0$.  Now,  let $b,c \in K$ be as above and observe that $c$ is not a $q$-th power in the residue field of $\pp_N$ while $\ord_{\pp_N}(bx^q+b^q) \not \equiv 0 \mod q$.  Thus the corresponding norm equation with $K$ replaced by $N$  and eventually by $K_{\inff}$ in \eqref{norm:prelim} has no solution.  Of course when $x$ is an integer and we have a solution to our norm equation in $K$, we also have a solution in $K_{\inff}$.

Note that for each prime $\pp_K$ of $K$, at every higher level of the tree we need just one factor with the local degree  not divisible by $q$ to make the norm equation unsolvable when $\pp_K$ appears in the denominator of the divisor of $x$.  Hence having one $q$-bounded path per every prime of $K$ is enough to make sure that no prime  of $K$ not dividing $q$ occurs as a pole of any element of $K$ in our set.   

Unfortunately, if we go to an extension of $K$ inside $K_{\inff}$, some primes of $K$ will split into distinct factors and can occur independently in the denominators of the divisors of elements of extensions of $K$.  Thus, in the extensions of $K$ inside $K_{\inff}$ we have to block each factor separately.  This is where the ``hereditary'' part comes in.  We need to require the same condition of $q$-boundedness for every descendant in the factor tree of  every prime of $K$ not dividing $q$, insuring integrality at all factors of all $K$-primes not dividing $q$.  

Before we tackle integrality at factors of $q$, we point out that a preliminary definition of the subring of an infinite extension containing only algebraic numbers with no poles outside the set of factors of $q$ is in \eqref{eq:C}.  Note that $\Phi_q(K_{\inff})$ is precisely the set of all $c \in K_{\inff}$ integral at $q$ and equivalent to 1 mod $q^3$.  Once we have a definition of  integrality at factors of $q$, we will also be able to define $\Phi_q(K_{\inff})$.

The main reason that only one $q$-bounded path per prime not dividing $q$ is enough to construct a definition of integers, is that the failure of the norm equation to have a solution locally at any one prime is enough  for the equation not to have solutions globally.  Conversely, in order to have solutions globally, we need to be able to solve the norm equations locally at all primes.  As already mentioned above, the reason we require $c$ to be integral at $q$ and equivalent to 1 mod $q^3$ is to make sure that factors of $q$ do not ramify when we take the $q$-th root of $c$. Just making $c$ have order divisible by $q$ at all primes does not in general guarantee that factors of $q$ do not ramify in such an extension.   If any factor of $q$ does ramify, then not all local units at this factor are norms in the extension, and making sure that the right side of the norm equation has order divisible by $q$ at all primes might not be enough to guarantee a global solution.   Hence we need to control the order of $c-1$ at {\it all} factors of $q$ at every level of the factor tree simultaneously, necessitating a stronger assumption on $q$, than on other primes. 

Depending on the field we might have a couple of options as far as integrality at $q$ goes.  If $q$ happens to be completely $p$-bounded in our infinite extension for some $p \not =q$, then we can pretty much use the same method as above with $p$-th root replacing the $q$-th root.  The only difference is that, assuming we have the primitive $p$-th root of unity in the field, by definition of a complete $p$-boundedness, we can fix  an element $c$ of the field such that $c$ is not a $p$-th power modulo any factor of $q$ in any finite subextension of $K_{\inff}$ containing some fixed number field.  We can also fix an element $b$ of the field such that the order of $b$ at any factor of $q$ is not divisible by $p$  in any finite subextension of $K_{\inff}$ containing the same fixed number field as above.  Using such elements $c$ and $b$ we can get an {\it existential} definition of a subset of the field containing all elements  with the order at any factor of $q$ bounded from below by a bound depending on $b$ and $p$.  (See Proposition \ref{finitely many for p not q}.)    If  ramification degrees of factors of $q$ are altogether bounded, then we can arrange for this set to be the set of all field elements integral at factors of $q$, but in a general case  the bound from below will be negative.  In this case to obtain the definition of integrality we will need one more step as described in Lemma \ref{integral}.

Before going back to infinite extensions, we would like to make a brief remark about the sets definable by our methods over number fields.  First of all, over any number field all primes are completely $p$-bounded for every $p$, and the ramification degree of factors of $q$ is altogether bounded.  So we can produce an existential and uniform (with parameters) definition  of integrality at all factors of $q$.  Note also that the complement of such a set is also uniformly existentially definable with parameters using the same method.  So, in summary, we now obtain a uniform definition of the form $\forall \forall \exists \ldots \exists$  of the ring of integers of any number field with a $q$-th primitive root of unity.  This result is along the lines of B. Poonen's result in \cite{Po4}, though his method is slightly different from ours since it uses ramified primes rather than non-splitting primes to obtain integrality formulas and restricts the discussion to $q=2$ and quadratic forms. As B. Poonen, we can also use $q=2$ and thus have a two-universal quantifier formula uniformly covering all number fields, but in this case if $K$ has real embeddings, we need to make sure that $c$ satisfies some additional conditions in order for the norm equations to have solutions (below we refer to these conditions as ``making sure that $c \in \Omega_2(K)$'').  

Returning now to the case of infinite extensions, we note that, assuming $q$ is $p$-bounded we now have a uniform first-order definition with parameters of algebraic integers across all $q$-bounded algebraic extensions of $\Q$ where $q$ is completely $p$-bounded.  However, for the infinite case we may require more universal quantifiers.  The number of these universal quantifiers will depend on the whether the ramification degree of factors of $q$ is bounded and on whether $q$ has a finite number of factors.  

The only case left to consider now is the case where $q$ is not completely $p$-bounded for any $p \not=q$ but is completely $q$-bounded.  This case requires a somewhat more technically complicated definition than the case where we had a requisite $p$.  In particular, we still need a cyclic extension (once again of degree $q$), where all the factors of $q$ will not split.  Such an extension does exist, but we might have to extend our field to be in a position to take advantage of it.  This construction is executed in  Lemma \ref{onlyq}.

\subsection{Overview of our construction defining $\Z$ using finitely generated elliptic curves and one completely $q$-bounded prime.}
 This section has an overview of a  construction of a definition of a number field $K$ over an infinite algebraic extension $K_{\inff}$ of $\Q$ using an elliptic curve with a Mordell-Weil group generated by points defined over $K$.  This construction also requires one completely $q$-bounded prime $p$ (which may equal to $q$).  Once we have a definition of $K$, a definition of $\Z$ follows from a result of J. Robinson. The use of elliptic curves for the purposes of definability also has a long history, as long as the one for norm equations and quadratic forms.  We review some of this history at the beginning of Section \ref{elliptic curves}.  Here we briefly dwell on the construction itself.

The main idea of the construction can be described as follows.  Given an element $x \in K_{\inff}$, we write down a statement saying that $x$ is integral at  $p$ and for every $n \in \Z_{>0}$ we have that $x$ equivalent to some element of $K$ mod $p^n$.  By the weak vertical method, this is enough to ``push'' $x$ into $K$.  (See Proposition \ref{extensions}.)   Our elliptic curve is the source of elements of $K$.  Any solution to an affine equation $y^2=x^3+ax+b$ of our elliptic curve must by assumption be in $K$.  Further if we let $P$ be a point of infinite order and let the affine coordinates of $[n]P$ corresponding to our equation be $(x_n, y_n)$, then the following statements are true: 
\be
\item Let $\mathfrak A$ be any integral divisor of $K$ and let $m$ be a positive integer.  Then there exists  $k \in \Z_{>0}$ such that $\mathfrak A \Big{|} \dd(x_{km})$, where $\dd(x_{km})$ is the denominator of the divisor of $x_{km}$ in the integral divisor semigroup of $K$. (See Lemma \ref{le:anydivisor}.)
\item There exists a positive integer $m$ such that for any positive integers $k,l $,
\[%
\dd(x_{lm}) \Big{|} \nn\left (\frac{x_{lm}}{x_{klm}}-k^2\right)^2
\]%
in the integral divisor semigroup of $K$. Here $\dd(x_{lm})$ as above refers to the denominator of the divisor of $x_{lm}$ and $\nn\left (\frac{x_{lm}}{x_{klm}}-k^2\right)$ refers to the  numerator of the divisor of  $\frac{x_{lm}}{x_{klm}}-k^2$.  (See Lemma \ref{le:equiv}.)  
\ee
Given $u \in K_{\inff}$  integral at some fixed $K$-prime $\pp_K$, we now consider a statement of the following sort: $\forall z \in K_{\inff}$ there exists $x, y, \hat x, \hat y \in K_{\inff}$ s.t. $(x,y), (\hat x, \hat y)$ satisfy the chosen elliptic curve equation and both $\displaystyle \frac{1}{zx}$ and  $\displaystyle x(u^2 -\frac{x}{\hat x})^2$ are integral at $\pp_K$ implying  that $\frac{(u^2 -\frac{x}{\hat x})^2}{z}$ is integral at $\pp_K$.  

If $u$ satisfies this formula, then since $\frac{x}{\hat x} \in K$, by the weak vertical method we have that $u \in K$.  Further, if $u$ is a square of an integer, this formula can be satisfied.  Thus we can proceed to define all integers, followed by all rational numbers and eventually $K$.  Finally, being able to define $\Z$ implies undecidability of the first-order theory of the field.

\subsection{Overview of the proof of undecidability of fields via undecidability of the rings of Integers and $\calS$-integers}

		As K. Fukuzaki we obtain first-order undecidability results using results of J. Robinson for totally real fields.  However we are also  able to use existential  undefinability results previously obtained by the author to show that the first order theory of fields and rings of integers of any abelian extension with finitely many ramified primes is undecidable, thus extending results of C. Videla.  The undecidability results are in Theorems \ref{undecidable} and \ref{cyclotomics} and Corollary \ref{cor:int}.  

To be more specific, a result of J. Robinson implies that if a ring of integers has a certain invariant which C. Videla called a ``Julia Robinson number'', one can define a first-order model of $\Z$ over the ring.  The Julia Robinson number $s$ of a ring $R$ of totally real integers is a real number  $s$
or $\infty$, such that $(0,s)$ is the smallest interval containing infinitely many sets of conjugates of numbers of
$R$, i.e., infinitely many $x \in R$ with all the conjugates (over $\Q$) in $(0, s)$.  A result of Kronecker implies that $s \geq 4$, and therefore if a totally real ring of integers in question contains the real parts of infinitely many distinct roots of unity, the Julia Robinson number for the ring is indeed 4, and we have the desired undecidability result.  Using our definability results, we thus obtain large families of totally real $q$-bounded fields with undecidable first-order theory.  Further, we can also show that every $q$-bounded totally real field is contained in an undecidable totally real field, while as we pointed out before, the field of all totally real numbers is decidable.

To use the existential undecidability results for rings, we need to define the integral closures of the rings of $\calS$-integers of number fields in infinite extensions under consideration.  This construction is necessary because the existential undecidability results previously obtained by the author pertain only to these bigger rings and not to the rings of integers.    The definitions of bigger rings require a minor adjustment of our construction above: we have to make $c$ as above equivalent to 1 no just modulo $q^3$ but also modulo all the primes in $\calS$.  Further, as in the case of $q$ and for similar reasons, we need primes in $\calS$ to be completely $q$-bounded.

\subsection{The structure of the paper}

The paper is structured in the following manner.  In Section \ref{Algebraic} we describe most of the algebraic number theory necessary to establish our results.   In Section \ref{primes} we discuss the conditions on primes we need to carry out our proofs over infinite extensions. This is where we introduce the notion of a field being ``$q$-bounded''. Section \ref{inf ext} completes the construction of first-order definitions of the rings of algebraic integers in specified infinite algebraic extensions of $\Q$, and Section \ref{examples} contains various examples of fields satisfying the requirements for our definitions.  Section \ref{undecidability} uses definitions of integers to produce undecidability results for fields.  Finally, Section \ref{elliptic curves} explains how to use finitely generated elliptic curves to obtain definitions of rational integers.

\section{Some Algebraic  Number Theory}
\label{Algebraic}
 In this section we show how to define a set of  elements of a number field containing all integers and such that all non-integers in the set have negative orders (poles) of order divisible by a given prime number $q$ only.  We start with some notation.
\begin{notationassumptions}
\label{not:1}
The following notation are used throughout the rest of the paper.
\begin{itemize}
\item Let $q$ be a rational prime number.
\item Let $ \xi_q$ be a primitive $q$-th root of unity.
\item Let $K, F, G, L$ denote algebraic extensions of $\Q$.
\item For a number field $G$, let $\pp_G, \qq_G, \ttt_G, \aaa_G$ be distinct non-archimedean primes of $G$.
\item If $K$ is any finite extension of a number field $G$, then $\pp_K, \qq_K, \ttt_K, \aaa_K$  denote primes above $\pp_G, \qq_G, \ttt_G, \aaa_G$ respectively.
\item For $K$ and $G$ as above, let $\calC_{K}(\pp_G)$ denote the set of all $K$-primes above $\pp_G$.
\item If $K$ is a number field and $x \in K$ and $\ord_{\pp_K}x >0$, we  say by analogy with function fields that $x$ has a zero at $\pp_K$.  Similarly, if $\ord_{\pp_K}x <0$, we say that $x$ has a pole at $\pp_K$.
\item If $\calS_K$ is a set of non-archimedean primes of $K$, then we let $O_{K,\calS_K}$ denote a subring of $K$ containing all the elements of $K$ without any poles at primes outside $\calS_K$.

\item For $x,b, d, a,c \in K\setminus \{0\}$, such that $bx^q+b^q\not=0, dx^q+d^q \not=0$   let
\[
L_1=K(\sqrt[q]{1+ x^{-1}}),
\]
\[
L_2=L_1(\sqrt[q]{1+ (bx^q+b^q)^{-1}}),
\]
\[
L=L_2(\sqrt[q]{1+(c+c^{-1})x^{-1}}),
\]
\[
F_1=K(\sqrt[q]{1+ d^{-1}}),
\] 
\[
F_2=F_1(\sqrt[q]{1+ (dx^q+d^q)^{-1}}),
\] 
\[
F=F_2(\sqrt[q]{1+(a+a^{-1})d^{-1}}).
\] 
and observe that $L$ depends on $K, q, x, b, c$, while $F$ depends on $K, q, a, x, d$.  For the rest of this section we will assume that $x,b, d, a,c$ take values in $K$ so that all the fields above are defined.

\item Let $\tilde \Q$ be an algebraic closure of $\Q$.
\item If $K$ is a number field, then for any prime $\pp_K$, let  $K_{\pp_K}$ be the completion of $K$ under the $\pp_K$-adic topology.
\item  If $K$ is a number field, and 
 \[
 \calS_K=\{\pp_{1,K}.\ldots,\pp_{l, K}\}
 \]
  is a finite set of primes of $K$, then let $\Theta_q(K,\calS_K)$ denote the set of all elements $c$ of $K$ such that the numerator of the divisor of $c-1$ is divisible by  the divisor $\prod_{i=1}^l \pp_{i,K}$ in the semi-group of the integral divisors of $K$.  If $\calS_K =\emptyset$, then set $\Theta_q(K,\calS_K)=K$.  
\item If $K$ is a number field, let $\Phi_q(K)$ denote the set of all elements $c$ of $K$ such that the numerator of the divisor of $c-1$ is divisible by $q^3$.  
 
\item  If $K$ is an infinite extension of $\Q$, and $\calS_K$ is a set of valuations of $K$ lying above finitely many primes of $\Q$, then  a $K$-element $c$ is  in $\Theta_q(K,\calS_K)$ if and only if for some number field $M \subset K$ and  the set $\calS_M$ of primes of $M$ below valuations of $\calS_K$ we have that $c \in \Theta_q(M,\calS_M)$.  Similarly a $K$-element $c \in \Phi_q(K)$ if and only if $c \in \Phi_q(\Q(c))$.
\item For an algebraic extension $K$ of $\Q$, let $\Omega_2(K)$ be the set of all the elements $c$ of $K$ such that for any embedding $\sigma$ of $K$ into $\tilde \Q$ we have that $\sigma(K) \subset \R\cap \tilde\Q$ implies $\sigma(x) \geq 0$.  If $K$ has no real embeddings or $q >2$, let $\Omega_q(K)=K$.
\end{itemize}
\end{notationassumptions}

The proof of the lemma below follows from the Hasse-Minkowski Theorem and the fact that over a local field a quaternary form is universal.
\begin{lemma}
\label{always non-negative}
If $H$ is any algebraic extension of $\Q$, then the set 
\[
\{x \in H|\exists x_1,x_2, x_3, x_4 \in H: x=x_1^2+x_2^2 + x_3^2 +x_4^2\}
\]
 is exactly the set of all elements of $H$ such that for any embedding $\sigma$ of $H$ into $\tilde \Q$ with $\sigma(H) \subset \R \cap \tilde \Q$ we have that $\sigma(x) \geq 0$.
\end{lemma}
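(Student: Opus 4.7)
The plan is to prove both containments. The forward inclusion is immediate: if $x = x_1^2 + x_2^2 + x_3^2 + x_4^2$ with $x_i \in H$, and $\sigma : H \to \tilde\Q$ is any embedding with $\sigma(H) \subset \R \cap \tilde\Q$, then $\sigma(x) = \sigma(x_1)^2 + \sigma(x_2)^2 + \sigma(x_3)^2 + \sigma(x_4)^2 \ge 0$.

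For the reverse inclusion, the strategy is to locate a number field $M$ with $\Q(x) \subseteq M \subseteq H$ in which $x$ is totally non-negative (non-negative at every real embedding of $M$), and then to apply Hasse--Minkowski to the representation problem $Y_1^2 + Y_2^2 + Y_3^2 + Y_4^2 = x$ over $M$. At every non-archimedean completion of $M$ a non-degenerate quaternary form is universal; at every complex place representation is automatic; and at every real place the form represents exactly the non-negative elements. Hence, once total non-negativity of $x$ in $M$ is secured, $x$ will be a sum of four squares in $M$, and a fortiori in $H$.

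The main task is therefore the construction of $M$. I would use a compactness argument on the profinite set $E$ of $\Q$-embeddings of $H$ into $\tilde\Q$: for each $\alpha \in H$ the set $U_{\alpha} := \{\sigma \in E : \sigma(\alpha) \in \R\}$ is clopen (the value $\sigma(\alpha)$ lies in the finite set of conjugates of $\alpha$), and the set of $\sigma \in E$ with $\sigma(H) \subset \R$ equals $\bigcap_{\alpha \in H} U_{\alpha}$. By hypothesis this intersection is disjoint from the clopen set $B := \{\sigma \in E : \sigma(x) < 0\}$, so the open complements $E \setminus U_{\alpha}$ cover the compact set $B$. Extracting a finite subcover indexed by $\alpha_1,\dots,\alpha_n \in H$ and setting $M := \Q(x,\alpha_1,\dots,\alpha_n)$, one checks that any real embedding $\rho$ of $M$ extends to some $\sigma \in E$ with $\sigma(\alpha_i) = \rho(\alpha_i) \in \R$ for every $i$, so $\sigma \notin B$ and $\rho(x) = \sigma(x) \ge 0$. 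A more pedestrian K\"onig's lemma / inverse-limit version of the same argument can be run over the finite extensions of $\Q(x)$ inside $H$, using that the set of real embeddings of a number field is finite.

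The principal obstacle is precisely this bridging step: Hasse--Minkowski is a statement about number fields, whereas $H$ may be of infinite degree, so one needs a device to convert the global hypothesis ``$\sigma(x) \ge 0$ for every real embedding of $H$'' into a finite-level total-non-negativity statement inside a single, well-chosen number field $M$. The profinite compactness of the embedding space is what makes this conversion possible.
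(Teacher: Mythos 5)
Your proof is correct, and it does more than the paper does: the paper disposes of this lemma with a single sentence, ``follows from the Hasse--Minkowski Theorem and the fact that over a local field a quaternary form is universal,'' without addressing how to get from the infinite extension $H$ down to a number field where Hasse--Minkowski actually applies. Your compactness argument on the profinite space $E$ of embeddings is precisely the bridge the paper elides, and it is exactly the right device: $B=\{\sigma\in E:\sigma(x)<0\}$ is clopen hence compact, the hypothesis says $B$ is covered by the open sets $E\setminus U_\alpha$, a finite subcover yields $\alpha_1,\dots,\alpha_n$, and $M=\Q(x,\alpha_1,\dots,\alpha_n)$ is then a number field in which $x$ is totally non-negative because any real embedding of $M$ extends to some $\sigma\in E$ fixing the $\alpha_i$ in $\R$, hence $\sigma\notin B$. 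From there Hasse--Minkowski in $M$ (non-archimedean places: a quinary form over a local field is isotropic, so a nondegenerate quaternary form is universal; complex places: automatic; real places: total non-negativity) gives $x$ as a sum of four squares in $M\subset H$. So the overall strategy is the one the paper intends, but you have supplied the genuinely nontrivial reduction step, which is the real content of the lemma when $[H:\Q]=\infty$. One cosmetic remark: the reason $U_\alpha$ is clopen is that membership in $U_\alpha$ depends only on the restriction $\sigma|_{\Q(\alpha)}$, which ranges over a finite discrete set; your parenthetical about conjugates is gesturing at this but could be phrased more precisely. Otherwise the argument is complete and sound.
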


\begin{remark}
\label{rem:omega}
If $K/M$ is an algebraic extension and $c \in \Omega_2(M)$, then $c \in \Omega_2(K)$. However, $\Omega_2(K) \cap M \not = \Omega_2(M)$ in all cases,  since there can be an embedding of $K$ into $\tilde \Q$ which is not real but the  restriction to the image of $M$ is real.  At the same time, if $K_{\inff}$ is an infinite algebraic extension of $M$ and $c \in \Omega_{2}(K_{\inff}) \cap M$, then for some finite extension $N$ of $M$ with $N\subset K_{\inff},$ for all $K$ such that $N \subseteq K \subset K_{\inff},$ we have $c \in \Omega_2(K)$.
\end{remark}

Next we state Hensel's lemma and its corollary which play an important role in  our use of Hasse Norm Principle.
\begin{lemma}
If $K$ is a number field, $f(X) \in K_{\pp_K}[X]$ has coefficients integral at $\pp_K$ and for some $\alpha \in K_{\pp_K}$ integral at $\pp_K$ we have that $\ord_{\pp_K}f(\alpha) > 2\ord_{\pp_K}f'(\alpha)$, then $f(X)$ has a root in $K_{\pp_K}$.  (See \cite{L}[Proposition 2, Section 2, Chapter II].)
\end{lemma}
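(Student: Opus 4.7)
The plan is to prove the statement by Newton's method of successive approximation, which is the standard route in the non-archimedean setting. Let $e := \ord_{\pp_K} f'(\alpha)$ and $k_0 := \ord_{\pp_K} f(\alpha)$, so that by hypothesis $k_0 > 2e$. I would define inductively a sequence $\{\alpha_n\} \subset K_{\pp_K}$ by $\alpha_0 = \alpha$ and
\[
\alpha_{n+1} = \alpha_n - \frac{f(\alpha_n)}{f'(\alpha_n)},
\]
and then show that it is Cauchy with respect to the $\pp_K$-adic topology, converges to an integral element $\alpha^\ast$, and that $f(\alpha^\ast) = 0$. Since $K_{\pp_K}$ is complete, the existence of the limit will follow once convergence is established.

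The key technical input is the Taylor expansion around $\alpha_n$: since $f$ has coefficients integral at $\pp_K$ and $\alpha_n$ will also be integral, one can write
\[
f(X) = f(\alpha_n) + f'(\alpha_n)(X - \alpha_n) + g_n(X)(X - \alpha_n)^2,
\]
\[
f'(X) = f'(\alpha_n) + h_n(X)(X - \alpha_n),
\]
with $g_n, h_n \in K_{\pp_K}[X]$ having coefficients integral at $\pp_K$. Substituting $X = \alpha_{n+1}$ yields
\[
f(\alpha_{n+1}) = g_n(\alpha_{n+1})\!\left(\frac{f(\alpha_n)}{f'(\alpha_n)}\right)^{\!2}, \qquad f'(\alpha_{n+1}) = f'(\alpha_n) - h_n(\alpha_{n+1})\frac{f(\alpha_n)}{f'(\alpha_n)}.
\]

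I would now carry out the main induction, proving simultaneously that (i) $\alpha_n$ is integral at $\pp_K$, (ii) $\ord_{\pp_K} f'(\alpha_n) = e$, and (iii) $\ord_{\pp_K} f(\alpha_n) - 2e \geq 2^n(k_0 - 2e)$. The first formula above gives $\ord_{\pp_K} f(\alpha_{n+1}) \geq 2(\ord_{\pp_K} f(\alpha_n) - e)$, which yields (iii); the second shows that the correction term has order $\ord_{\pp_K} f(\alpha_n) - e > e$, so the ultrametric inequality gives (ii); and (i) follows because $\ord_{\pp_K}(\alpha_{n+1} - \alpha_n) = \ord_{\pp_K} f(\alpha_n) - e > 0$. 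Since $k_0 - 2e > 0$, condition (iii) forces $\ord_{\pp_K} f(\alpha_n) \to \infty$, and hence $\ord_{\pp_K}(\alpha_{n+1} - \alpha_n) \to \infty$, making $\{\alpha_n\}$ Cauchy. Completeness gives a limit $\alpha^\ast$ integral at $\pp_K$, and continuity of polynomial evaluation yields $f(\alpha^\ast) = \lim_n f(\alpha_n) = 0$.

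The main obstacle, though a routine one, is the bookkeeping that sustains the simultaneous induction: one must verify that the correction term $f(\alpha_n)/f'(\alpha_n)$ has strictly positive order (to keep integrality propagating) and has order strictly larger than $e$ (to preserve $\ord_{\pp_K} f'(\alpha_n) = e$ via the ultrametric equality case). Both demands reduce to the single inequality $\ord_{\pp_K} f(\alpha_n) > 2e$, which is exactly what assertion (iii) of the induction guarantees, and the strict inequality in the hypothesis $k_0 > 2e$ is what makes the doubling estimate strictly expanding rather than merely nondecreasing.
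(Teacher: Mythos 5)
The paper does not prove this lemma itself: it is stated as a known fact and justified only by a citation to Lang's \emph{Algebraic Number Theory} (Prop.~2, \S 2, Ch.~II), where the proof is precisely the Newton-iteration argument you have given. Your write-up is correct and complete for the purposes of the statement — the three-part simultaneous induction (integrality of $\alpha_n$, stability of $\ord f'(\alpha_n)=e$ via the ultrametric equality case, and the doubling estimate $\ord f(\alpha_n)-2e\ge 2^n(k_0-2e)$) is exactly the right bookkeeping, and the hypothesis $k_0>2e$ is used in exactly the two places you identify (keeping the correction term of order $>e\ge 0$ and making the doubling strictly expanding); so you have essentially reproduced the proof from the cited reference.
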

\begin{corollary}
\label{cor:q-thpower}
 If  $K$ is a number field,  $x \in K$ is integral at all factors of $q$, $x \equiv 1 \mod q^3$, and $\qq_K$ is any prime of $K$ dividing $q$, then $x$ is a $q$-th power in $K_{\qq_K}$.
\end{corollary}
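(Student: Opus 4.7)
The plan is to apply the preceding Hensel's lemma directly to the polynomial $f(T) = T^q - x \in K_{\qq_K}[T]$ with the approximate root $\alpha = 1$.

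First I would verify the hypotheses of Hensel's lemma for this choice. Since $x$ is integral at $\qq_K$, the coefficients of $f$ lie in the valuation ring of $K_{\qq_K}$, and $\alpha = 1$ is certainly integral. The derivative is $f'(T) = q T^{q-1}$, so $f'(1) = q$. Letting $e = \ord_{\qq_K}(q) \geq 1$ (which is positive precisely because $\qq_K$ divides $q$), we have $2\ord_{\qq_K}(f'(1)) = 2e$. On the other hand, $f(1) = 1 - x$, and the hypothesis $x \equiv 1 \bmod q^3$ means the divisor $q^3$ divides the numerator of the divisor of $x-1$, so in particular $\ord_{\qq_K}(1 - x) \geq 3e$.

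Thus $\ord_{\qq_K}(f(1)) \geq 3e > 2e = 2\ord_{\qq_K}(f'(1))$, so Hensel's lemma produces a root $\beta \in K_{\qq_K}$ of $T^q - x$. That root is precisely a $q$-th root of $x$ in $K_{\qq_K}$, which is the required conclusion.

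There is essentially no obstacle here — the only thing to watch is the strict inequality, which comes down to $3e > 2e$, and this uses nothing more than $e \geq 1$, guaranteed by the assumption $\qq_K \mid q$. The exponent $3$ in the condition $x \equiv 1 \bmod q^3$ is slightly more than needed (any exponent strictly greater than $2$ would suffice in the ramified case, and strictly greater than $2/e$ in general), but the stated form makes the argument uniform across all factors of $q$ in all number fields, which is presumably why it is chosen this way for later use in the norm-equation construction.
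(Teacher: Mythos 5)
Your proof is correct and takes essentially the same approach as the paper: apply Hensel's lemma to $f(T)=T^q-x$ at $\alpha=1$, computing $\ord_{\qq_K}f'(1)=e(\qq_K/q)$ and $\ord_{\qq_K}f(1)\geq 3e(\qq_K/q)$. (Your use of $\geq$ rather than the paper's $=$ for $\ord_{\qq_K}(1-x)$ is in fact the more accurate reading of the hypothesis, but the argument is otherwise identical.)
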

\begin{proof}
Let $f(X)= X^q-x$ and observe that by our assumption on $x$ we have the following:
\[
\ord_{\qq_K}f(1)=\ord_{\qq_K}(1-x) =3e(\qq_K/q).  
\]
At the same time $\ord_{\qq_K}f'(1)=\ord_{\qq_K}q=e(\qq_K/q)$ and therefore $\ord_{\qq_K}f(1) > 2\ord_{\qq_K}f'(1)$.  Hence, by Hensel's lemma $f(x)$ has a root in $K_{\qq_K}$, making $x$ a $q$-th power.
\end{proof}
The  two lemmas below, stated without a proof, list some basic number-theoretic facts.

\begin{lemma}
\label{le:first}
If $F$ is a number field containing $\xi_q$, $b \in F$ and $b$ is not a $q$-th power in $F$, then the following statements are true.
\begin{enumerate}
\item \label{firstit:1} If $\ord_{\pp_F}q=\ord_{\pp_F}b=0$, then $\pp_F$ does not ramify in the extension $F(\sqrt[q]{b})/F$.
\item \label{firstit:2} If $\ord_{\pp_F}b=0$, $b$ is not a $q$-th power  mod $\pp_F$,  and $\pp_F$ does not divide $q$, then $\pp_F$ does not split (i.e. has only one prime above it) in the extension $F(\sqrt[q]{b})/F$.
\item \label{firstit:3} If $\ord_{\pp_F} b =0$, $\pp_F$ does not divide $q$,   and $b$ is a $q$-th power  mod $\pp_F$, then $\pp_F$ splits into distinct factors in the extension $F(\sqrt[q]{b})/F$.
\item \label{firstit:4} If $\ord_{\pp_F}b \not \equiv 0 \mod q$, then $\pp_F$ ramifies completely in the extension $F(\sqrt[q]{b})/F$.
\end{enumerate}
\end{lemma}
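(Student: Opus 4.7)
The plan is to treat all four parts through local analysis at $\pp_F$, using that $E = F(\sqrt[q]{b})/F$ is a degree $q$ cyclic Kummer extension (since $\xi_q \in F$ and $b$ is not a $q$-th power in $F$, and $q$ is prime, the polynomial $X^q - b$ is irreducible over $F$). Parts (\ref{firstit:1})--(\ref{firstit:3}) fit together naturally: first I would obtain unramifiedness via a discriminant computation, then, given that a prime extension is unramified, distinguish inertness from complete splitting by a residue-field / Hensel's-lemma argument. Part (\ref{firstit:4}) is an independent short calculation with valuations.

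For (\ref{firstit:1}), the discriminant of $X^q - b$ is $\pm q^{q} b^{q-1}$, which is a $\pp_F$-unit when $\pp_F$ divides neither $q$ nor $b$. Since the relative discriminant of $E/F$ divides the discriminant of the minimal polynomial of $\sqrt[q]{b}$, $\pp_F$ does not divide $\operatorname{disc}(E/F)$, and hence is unramified. For (\ref{firstit:2}) and (\ref{firstit:3}), the extension has prime degree $q$ and is unramified at $\pp_F$ by (\ref{firstit:1}), so the relation $\sum f_i = q$ leaves only the two options: $\pp_F$ is inert, or $\pp_F$ splits completely into $q$ factors. Let $k = O_F/\pp_F$; because $\pp_F \nmid q$, the reduction $\bar\xi_q$ is a primitive $q$-th root of unity in $k$. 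If $b \equiv \beta^q \pmod{\pp_F}$, then
\[
X^q - \bar b \;=\; \prod_{j=0}^{q-1}\bigl(X - \bar\xi_q^{\,j}\bar\beta\bigr)
\]
has $q$ distinct simple roots in $k$, each lifting by Hensel's lemma to a root in $F_{\pp_F}$; so $\pp_F$ splits completely, proving (\ref{firstit:3}). Conversely, if $b$ is not a $q$-th power in $k$, then $X^q - b$ has no root in $k$, hence none in $F_{\pp_F}$, so $\pp_F$ must be inert, proving (\ref{firstit:2}).

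For (\ref{firstit:4}), let $\alpha = \sqrt[q]{b}$ and $v = \ord_{\pp_F}b$, with $\gcd(v,q)=1$ by assumption. For any prime $\pp$ of $E$ above $\pp_F$ with ramification index $e = e(\pp/\pp_F)$, the identity $q\cdot\ord_{\pp}(\alpha) = \ord_{\pp}(b) = ev$ forces $q \mid ev$. Since $q$ is prime and $q \nmid v$, we obtain $q \mid e$, and together with $e \leq [E:F] = q$ this yields $e = q$, so $\pp_F$ is totally ramified. None of the steps presents a genuine obstacle; the only points to verify with care are that the reduction $\bar\xi_q$ really is a primitive $q$-th root of unity in the residue field (which is where $\pp_F \nmid q$ is used in (\ref{firstit:2})--(\ref{firstit:3})) and that $X^q - b$ is the minimal polynomial of $\sqrt[q]{b}$ (which uses primality of $q$ together with the hypothesis that $b$ is not a $q$-th power in $F$).
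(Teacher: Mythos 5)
The paper states this lemma without proof (the text just before it reads: ``The two lemmas below, stated without a proof, list some basic number-theoretic facts''), so there is no paper argument to compare against. Your proof is correct and is the standard local Kummer-theoretic argument: the discriminant $\pm q^{q}b^{q-1}$ of $X^q-b$ gives unramifiedness at primes away from $q$ and $b$; the prime-degree Galois structure then reduces the unramified dichotomy to inert versus totally split, which you settle via Hensel's lemma and the fact that $\bar\xi_q$ is a primitive $q$-th root of unity in the residue field (here $\pp_F\nmid q$ is essential); and for (\ref{firstit:4}) the valuation identity $q\cdot\ord_{\pp}(\sqrt[q]{b})=e(\pp/\pp_F)\cdot\ord_{\pp_F}(b)$ forces $e=q$ when $\gcd(q,\ord_{\pp_F}b)=1$. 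The only minor points I would make explicit: the discriminant argument in (\ref{firstit:1}) should be phrased locally, since $b$ is only assumed to be a $\pp_F$-unit rather than a global integer (then $\sqrt[q]{b}$ is integral over $O_{F,\pp_F}$ and the unit discriminant shows $O_{F,\pp_F}[\sqrt[q]{b}]$ is the full integral closure, hence unramified); and in (\ref{firstit:3}) the conclusion ``splits completely'' is stronger than the stated ``splits into distinct factors'', but in an unramified prime-degree Galois extension the two coincide, which is worth a word. No gaps.
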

The second lemma deals with norms and primes in cyclic extensions of degree $q$.
\begin{lemma}
\label{le:cycextensions}
Let $G/F$ be a cyclic extension of degree $q$ of number fields.  If $\pp_F$ is not ramified in the extension, then either it splits completely (in other words into $q$ distinct factors) or it does not split at all.  Further if $w ={\mathbf N}_{G/F}(z)$ for some $z \in G$, and $\pp_F$ does not split in the extension,  then $\ord_{\pp_F}w \equiv 0 \mod q$.
\end{lemma}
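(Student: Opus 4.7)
My plan is to prove the two assertions separately, both by appealing to standard splitting behaviour in Galois extensions of prime degree.

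For the first assertion, I would apply the fundamental identity $efg = [G:F] = q$ for the Galois extension $G/F$, where $e$ is the common ramification index of the primes above $\pp_F$, $f$ their common residue degree, and $g$ the number of distinct primes above $\pp_F$. Since $\pp_F$ is assumed unramified we have $e=1$, so $fg = q$. Because $q$ is prime, either $f=1$ and $g=q$ (complete splitting into $q$ distinct factors) or $f=q$ and $g=1$ (a single prime above $\pp_F$ with full residue degree). No other possibility exists, which is exactly the dichotomy asserted.

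For the second assertion, suppose $\pp_F$ does not split, so there is a unique prime $\pp_G$ above $\pp_F$ with $e(\pp_G/\pp_F)=1$ and $f(\pp_G/\pp_F)=q$. For any $z \in G$ I would compute
\[
\ord_{\pp_G}\bigl(\mathbf{N}_{G/F}(z)\bigr) = \ord_{\pp_G}\!\Bigl(\prod_{\sigma \in \Gal(G/F)} \sigma(z)\Bigr) = \sum_{\sigma \in \Gal(G/F)} \ord_{\pp_G}\bigl(\sigma(z)\bigr).
\]
Since $\pp_G$ is the only prime of $G$ above $\pp_F$, the Galois group must fix $\pp_G$ set-wise (it permutes the primes above $\pp_F$ transitively), so $\ord_{\pp_G}(\sigma(z)) = \ord_{\pp_G}(z)$ for every $\sigma$. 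Therefore
\[
\ord_{\pp_G}\bigl(\mathbf{N}_{G/F}(z)\bigr) = q\cdot \ord_{\pp_G}(z).
\]
Translating back to $F$ via the relation $\ord_{\pp_G}(\alpha) = e(\pp_G/\pp_F)\,\ord_{\pp_F}(\alpha)$ valid for $\alpha \in F$, and using $e(\pp_G/\pp_F)=1$, I conclude $\ord_{\pp_F}(w) = q\,\ord_{\pp_G}(z) \equiv 0 \pmod q$.

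I do not anticipate any real obstacle here: the argument is entirely a routine combination of the $efg$-formula with the Galois-equivariance of valuations at a unique prime. The only mildly delicate point is remembering that for a Galois extension all primes above $\pp_F$ share the same $e$ and $f$, which is what makes the $efg$ dichotomy clean in degree $q$.
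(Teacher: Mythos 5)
Your proof is correct. The paper states this lemma without proof (it labels it as one of two ``basic number-theoretic facts''), and your argument is exactly the standard one: the $efg=q$ identity with $q$ prime forces the dichotomy, and when $g=1$ the Galois group fixes the unique prime $\pp_G$ above $\pp_F$, so all $q$ conjugates of $z$ have the same $\pp_G$-order, giving $\ord_{\pp_G}(\mathbf{N}_{G/F}(z)) = q\,\ord_{\pp_G}(z)$, which pulls back unchanged to $\pp_F$ since $e=1$.
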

The following lemma provides a way to avoid ramification of factors of $q$ while taking a $q$-th root.
\begin{lemma}
\label{notq}
If $K$ is a number field containing $\xi_q$, a $K$-prime $\qq_K$ is a factor of $q$ and 
\[
\ord_{\qq_K}(c - 1) \geq 3\ord_{\qq_K} q,
\]
 then $\qq_K$ splits completely in the extension $K(\sqrt[q]{c})/K$. 
\end{lemma}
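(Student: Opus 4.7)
The plan is to reduce to the local picture at $\qq_K$ and apply (a local version of) Corollary \ref{cor:q-thpower} to show that the polynomial $X^q - c$ already splits into linear factors over the completion $K_{\qq_K}$.

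First I would note that the hypothesis $\ord_{\qq_K}(c-1) \geq 3 \ord_{\qq_K} q$ is exactly the local version of the global hypothesis of Corollary \ref{cor:q-thpower}: applying Hensel's lemma to the polynomial $f(X) = X^q - c$ at the approximate root $X = 1$, we have $\ord_{\qq_K} f(1) = \ord_{\qq_K}(c-1) \geq 3\ord_{\qq_K} q$ while $\ord_{\qq_K} f'(1) = \ord_{\qq_K} q$, so the Hensel condition $\ord_{\qq_K} f(1) > 2 \ord_{\qq_K} f'(1)$ holds. Hence there exists $\alpha \in K_{\qq_K}$ with $\alpha^q = c$.

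Next, since $\xi_q \in K \subseteq K_{\qq_K}$, the $q$ distinct roots of $X^q - c$, namely $\alpha, \xi_q \alpha, \ldots, \xi_q^{q-1} \alpha$, all lie in $K_{\qq_K}$. Therefore $X^q - c$ factors into $q$ distinct linear factors over $K_{\qq_K}[X]$.

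Finally, I would invoke the standard correspondence between the prime decomposition of $\qq_K$ in $K(\sqrt[q]{c})$ and the factorization over $K_{\qq_K}$ of the minimal polynomial of $\sqrt[q]{c}$ over $K$. If $c$ is already a $q$-th power in $K$ then $K(\sqrt[q]{c}) = K$ and the conclusion is vacuous; otherwise the minimal polynomial is $X^q - c$, and its splitting into $q$ distinct linear factors over $K_{\qq_K}$ yields exactly $q$ primes of $K(\sqrt[q]{c})$ above $\qq_K$, each with ramification index and residue degree equal to $1$. This is precisely the statement that $\qq_K$ splits completely. There is no real obstacle here beyond the Hensel's lemma application, which is already handled by Corollary \ref{cor:q-thpower} and works verbatim in $K_{\qq_K}$.
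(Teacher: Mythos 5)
Your proof is correct and follows essentially the same route as the paper's: invoke the Hensel's-lemma computation behind Corollary~\ref{cor:q-thpower} to produce a $q$-th root of $c$ in $K_{\qq_K}$, use $\xi_q \in K$ to get $q$ distinct roots, and conclude via the standard dictionary between the factorization of $X^q - c$ over $K_{\qq_K}$ and the splitting of $\qq_K$. You are a bit more explicit than the paper in noting that Corollary~\ref{cor:q-thpower} as stated has a global hypothesis (``$x \equiv 1 \bmod q^3$''), whereas the lemma only provides the local condition at $\qq_K$, but that the Hensel argument only ever uses the local valuation, which is a worthwhile clarification rather than a different method.
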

\begin{proof}
By Corollary \ref{cor:q-thpower} the polynomial $X^q-c$ has a root in  $\qq_K$-adic completion of $K$, and since the field contains the primitive $q$-th root of unity, the polynomial has $q$ distinct roots.  Thus, the local degree is one for all the factors  above $\qq_K$.
\end{proof}

The next two propositions explain the purpose of introducing extension 
\[
L =K(\sqrt[q]{1+ x^{-1}}, \sqrt[q]{1+ (bx^q+b^q)^{-1}},\sqrt[q]{1+(c+c^{-1})x^{-1}}).
\]
  In $L$:
\be
\item  all primes that are zeros of $x$ and $bx^q+b^q$ ramify unless the order of these zeros is divisible by $q$;
\item  all primes that are zeros and poles of $c$  ramify unless the order of $c$ at these primes is divisible by $q$;
\item  we  avoid ramifying primes in the cyclic extension obtained taking the $q$-th root of $c$, where we are going solve norm equations; 
\item  we make sure that zeros of $x$ do not have any influence on whether the norm equation has solutions.
\ee
\begin{proposition}
\label{prop:badprime}
If $K$ is a number field containing $\xi_q$, and for some elements $b, c \in K$ and some $K$-prime $\pp_K$ the following assumptions are true:
\be
\item \label{ass:1} $\pp_K$ is not a factor of $q$,
\item \label{ass:2} $c$ is not a $q$-th power modulo $\pp_K$  (note that this assumption includes the assumption that  $\ord_{\pp_{K}}c = 0$),
\item \label{ass:3}$\ord_{\pp_K}x <0$,
\item \label{ass:4}$\ord_{\pp_K}b \not \equiv 0 \mod q, $
\item \label{ass:5} $q\ord_{\pp_K}x < (q-1)\ord_{\pp_K}b$,
\ee
then for every prime factor $\pp_{L}$ of $\pp_{K}$ in $L$ we have that
\be
\item $\ord_{\pp_L}x <0$,
\item $c$ is not a $q$-th power modulo $\pp_L$ and thus not a $q$-th power in $L$,
 and
\item $\ord_{\pp_L}(bx^q + b^q)\not \equiv 0 \mod q$.
\ee
\end{proposition}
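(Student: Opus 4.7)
My strategy is to show that $\pp_K$ splits completely in $L/K$; once this is established, all three conclusions follow immediately. Since $L$ is obtained from $K$ by adjoining the three $q$-th roots $\sqrt[q]{\alpha_i}$ with $\alpha_1 = 1+x^{-1}$, $\alpha_2 = 1+(bx^q+b^q)^{-1}$ and $\alpha_3 = 1+(c+c^{-1})x^{-1}$, it suffices to verify that each $\alpha_i$ is a $q$-th power in the completion $K_{\pp_K}$: because $\xi_q \in K \subset K_{\pp_K}$, this forces $X^q-\alpha_i$ to split completely over $K_{\pp_K}$, so $\pp_K$ splits completely in each of the three intermediate steps and hence in $L/K$.

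To produce these $q$-th roots, I apply the Hensel criterion stated just above Corollary \ref{cor:q-thpower} to $f(X)=X^q-\alpha_i$ at the approximation $r=1$. Assumption (\ref{ass:1}) gives $\ord_{\pp_K}f'(1)=\ord_{\pp_K}q=0$, so the required inequality $\ord_{\pp_K}f(1)>2\ord_{\pp_K}f'(1)$ reduces to $\ord_{\pp_K}(\alpha_i-1)>0$. For $\alpha_1$ this is immediate from assumption (\ref{ass:3}), since $\ord_{\pp_K}(x^{-1})=-\ord_{\pp_K}x>0$. For $\alpha_3$, assumption (\ref{ass:2}) gives $\ord_{\pp_K}c=0$, so $c+c^{-1}$ is integral at $\pp_K$ and $\ord_{\pp_K}(\alpha_3-1)\geq -\ord_{\pp_K}x>0$. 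For $\alpha_2$, assumption (\ref{ass:5}) rearranges to $\ord_{\pp_K}(bx^q)<\ord_{\pp_K}(b^q)$, yielding $\ord_{\pp_K}(bx^q+b^q)=\ord_{\pp_K}b+q\ord_{\pp_K}x$; in the regime in which the proposition is invoked ($b$ chosen with $\ord_{\pp_K}b<0$, as in the overview), this quantity is negative, so $\ord_{\pp_K}(\alpha_2-1)=-\ord_{\pp_K}(bx^q+b^q)>0$.

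With complete splitting in hand, every $\pp_L\mid\pp_K$ in $L$ satisfies $e(\pp_L/\pp_K)=f(\pp_L/\pp_K)=1$, so $\ord_{\pp_L}$ restricts to $\ord_{\pp_K}$ on $K$ and the residue fields at $\pp_L$ and $\pp_K$ coincide. The inequality $\ord_{\pp_L}x<0$ is then immediate from assumption (\ref{ass:3}); the element $c$ remains a non-$q$-th-power modulo $\pp_L$ because the residue field has not grown, whence it is also a non-$q$-th-power in $L$ (a $q$-th root in $L$ would reduce to one modulo $\pp_L$); and
\[
\ord_{\pp_L}(bx^q+b^q)=\ord_{\pp_K}(bx^q+b^q)=\ord_{\pp_K}b+q\ord_{\pp_K}x\equiv \ord_{\pp_K}b\not\equiv 0\bmod q
\]
by assumption (\ref{ass:4}). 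The main technical point is the sign analysis of $\ord_{\pp_K}(bx^q+b^q)$ that underlies the Hensel step for $\alpha_2$, which depends on the interaction of assumption (\ref{ass:5}) with the sign of $\ord_{\pp_K}b$ dictated by the regime of application.
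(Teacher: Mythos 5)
Your approach is essentially the paper's: show that $\pp_K$ splits completely in $L/K$, after which all three conclusions are immediate because $\ord_{\pp_L}$ agrees with $\ord_{\pp_K}$ on $K$ and the residue field is unchanged. You carry out the splitting analysis in one pass via Hensel's lemma in the completion $K_{\pp_K}$, while the paper goes step by step, applying Lemma \ref{le:first}, Part \ref{firstit:3} to each of $L_1/K$, $L_2/L_1$, $L/L_2$; the underlying computation ($\ord_{\pp_K}(\alpha_i -1)>0$ for each adjoined $q$-th root) is the same.

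The one point where you are more candid than the paper deserves comment. You observe that the Hensel step for $\alpha_2=1+(bx^q+b^q)^{-1}$ requires $\ord_{\pp_K}(bx^q+b^q)<0$, and that assumption (\ref{ass:5}) only yields $\ord_{\pp_K}(bx^q+b^q)=\ord_{\pp_K}b+q\ord_{\pp_K}x<q\,\ord_{\pp_K}b$, which is negative only when $\ord_{\pp_K}b\le 0$; you then import $\ord_{\pp_K}b<0$ from ``the regime of application.'' That observation is correct, and it exposes a genuine gap in the proposition \emph{as stated}: take $q=3$, $\ord_{\pp_K}x=-10$, $\ord_{\pp_K}b=100$. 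All five hypotheses hold, but $\ord_{\pp_K}(bx^q+b^q)=70>0$; then $1+(bx^q+b^q)^{-1}$ has a pole of order $70$ at $\pp_{L_1}$ with $70\not\equiv 0\bmod 3$, so by Lemma \ref{le:first}, Part \ref{firstit:4}, $\pp_{L_1}$ ramifies completely in $L_2/L_1$, and $\ord_{\pp_L}(bx^q+b^q)$ picks up a factor of $q$, defeating conclusion (3). So the proposition needs the extra hypothesis $\ord_{\pp_K}b<0$. The paper's own proof makes the identical unjustified inference (it asserts directly from assumption (\ref{ass:5}) that $\ord_{\pp_{L_1}}(bx^q+b^q)<0$). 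In all downstream uses (Proposition \ref{prop:norm} via Corollary \ref{def1}), $b$ is chosen with $\ord_{\pp_M}b=-1$, so nothing later breaks; but a blind proof of the proposition as stated should either record the needed sign hypothesis as an amendment to the statement or show it follows from the other hypotheses (it does not). You located the issue precisely; just state the missing hypothesis explicitly rather than invoking the regime.
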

\begin{proof}
First  by properties of primes and Assumption \ref{ass:3}, we have that  $\ord_{\pp_L}x <0$.  By Assumption \ref{ass:4}, we have that $\ord_{\pp_{K}}b\not \equiv 0 \mod q$. Next we note that $\ord_{\pp_{K}}(x^{-1}) >0$,
 and therefore by Lemma \ref{le:first}, Part \ref{firstit:3}  we have that $\pp_{K}$ splits completely into distinct factors in the extension $L_1/K$. (We remind the reader that $L_1= K(\sqrt[q]{1+ x^{-1}})$.)  Thus, in $L_1$ we have that $\ord_{\pp_{L_1}}x <0$, $\ord_{\pp_{L_1}}b \not \equiv 0 \mod q$, and $c$ is not a $q$-th power modulo $\pp_{L_1}$.  We now note that by Assumption \ref{ass:5} we have that $q\ord_{\pp_K} x+\ord_{\pp_K}b < q\ord_{\pp_K}b$,  and therefore
 \[
 \ord_{\pp_{L_1}}(bx^q+b^q) =\ord_{\pp_{L_1}}b +q\ord_{\pp_{L_1}}x <0.
 \]
  Further, by Assumption \ref{ass:4} we have that $\ord_{\pp_{L_1}}(bx^q+b^q) \not \equiv 0 \mod q$.  Applying Lemma  \ref{le:first}, Part \ref{firstit:3} again, this time over the field $L_2 =L_1(\sqrt[q]{1+ (bx^q+b^q)^{-1}})$, we see that in the extension $L_2/L_1$, the $L_1$- prime $\pp_{L_1}$ splits completely into distinct factors and thus $c$ is not a $q$-th power modulo any $\pp_{L_2}$, while  $\ord_{\pp_{L_2}}(bx^q+b^q) \not \equiv 0 \mod q$ and  $\ord_{\pp_{L_2}}(bx^q+b^q) <0.$   Since, by assumption, $\ord_{\pp_{K}}c =0$ and therefore $\ord_{\pp_{L_2}}c =0$, by Lemma \ref{le:first}, Part \ref{firstit:3} one more time, $\pp_{L_2}$ will split completely into distinct factors in the extension $L/L_2,$ and,  as before, this would imply that $c$ is not a $q$-th power in $L$ or modulo any $\pp_L$ above $\pp_{K}$.  Here we remind the reader that  $L =L_2(\sqrt[q]{1+(c+c^{-1})x^{-1}}).$
  Finally, we also have  $\ord_{\pp_L}(bx^q+b^q) \not \equiv 0 \mod q.$
\end{proof}
\begin{proposition}
\label{prop:fixorder}
 If $K$ is a number field containing $\xi_q$,  and $x,c, b \in K, L$ are as in Proposition \ref{prop:badprime}, then for any $L$-prime $\aaa_L$  that is not a factor of $q$ and is not a pole of $x$, the following statements hold:
\be
\item $\ord_{\aaa_L}c \equiv 0 \mod q$;
\item $\ord_{\aaa_L}(bx^q+b^q) \equiv 0 \mod q$;
\item $\ord_{\aaa_L}x \equiv 0 \mod q$.
\ee
\end{proposition}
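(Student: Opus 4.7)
The plan is to climb the tower $K \subset L_1 \subset L_2 \subset L$ and, at each step, use the $q$-th root that is adjoined to force the relevant element to have order divisible by $q$ at $\aaa_L$. Let $\aaa_{L_2}, \aaa_{L_1}, \aaa_K$ denote the primes of $L_2, L_1, K$ lying under $\aaa_L$; none divides $q$. Because $\aaa_L$ is not a pole of $x \in K$, the ramification identity forces $\ord x \geq 0$ at every one of these primes. The key tool throughout is Lemma \ref{le:first}(\ref{firstit:4}), which converts ``order not divisible by $q$'' into ``complete ramification of index $q$'' in the cyclic degree-$q$ extension generated by a $q$-th root. I will establish conclusion (3) first, then use it in the argument for conclusion (1), and dispatch conclusion (2) independently.

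For conclusion (3), I examine $L_1 = K(\sqrt[q]{1 + x^{-1}})$. If $\ord_{\aaa_K} x$ is $0$ or already divisible by $q$, the ramification identity $\ord_{\aaa_L} x = e(\aaa_L/\aaa_K)\ord_{\aaa_K} x$ yields the claim. Otherwise $\ord_{\aaa_K}(1 + x^{-1}) = -\ord_{\aaa_K} x$ is negative and not divisible by $q$, so Lemma \ref{le:first}(\ref{firstit:4}) forces $e(\aaa_{L_1}/\aaa_K) = q$, whence $\ord_{\aaa_{L_1}} x \equiv 0 \mod q$ and the conclusion propagates to $\aaa_L$. For conclusion (2), I examine $L_2/L_1$. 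If $\ord_{\aaa_{L_1}} b < 0$, then $q \ord_{\aaa_{L_1}} b < \ord_{\aaa_{L_1}}(bx^q)$, so $\ord_{\aaa_{L_1}}(bx^q + b^q) = q\ord_{\aaa_{L_1}} b$ is already divisible by $q$. If $\ord_{\aaa_{L_1}} b \geq 0$ and $\ord_{\aaa_{L_1}}(bx^q + b^q)$ is zero or divisible by $q$, we are done; otherwise this order is strictly positive and not divisible by $q$, so $1 + (bx^q + b^q)^{-1}$ has negative order not divisible by $q$ at $\aaa_{L_1}$, and Lemma \ref{le:first}(\ref{firstit:4}) applied to $L_2/L_1$ forces complete ramification, bringing $\ord_{\aaa_{L_2}}(bx^q + b^q)$ and hence $\ord_{\aaa_L}(bx^q + b^q)$ into a multiple of $q$.

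For conclusion (1), I examine $L/L_2$. Assume $\ord_{\aaa_{L_2}} c \not\equiv 0 \mod q$ (the other case is immediate). Then $\ord_{\aaa_{L_2}} c \neq 0$, so $\ord_{\aaa_{L_2}}(c + c^{-1}) = -|\ord_{\aaa_{L_2}} c|$ is not divisible by $q$. Invoking conclusion (3), $\ord_{\aaa_{L_2}} x \equiv 0 \mod q$, so $\ord_{\aaa_{L_2}}((c + c^{-1}) x^{-1})$ is negative and not divisible by $q$, and the same is therefore true of $1 + (c + c^{-1}) x^{-1}$. Lemma \ref{le:first}(\ref{firstit:4}) applied to $L/L_2$ gives $e(\aaa_L/\aaa_{L_2}) = q$, so $\ord_{\aaa_L} c = q \ord_{\aaa_{L_2}} c \equiv 0 \mod q$. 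The main subtlety is the case split for conclusion (2) when $b$ has negative order (where the ramification argument is replaced by a direct order computation), and the observation that all three $q$-th roots are taken of elements of the form $1 + (\cdot)^{-1}$ or $1 + (\cdot)\, x^{-1}$, which is precisely what makes the ``negative order not divisible by $q$'' condition arise at the primes we wish to ramify.
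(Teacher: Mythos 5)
Your proof is correct and follows essentially the same route as the paper's: climb the tower $K \subset L_1 \subset L_2 \subset L$ and apply Lemma \ref{le:first}, Part \ref{firstit:4} at each step to force ramification of index $q$ wherever the relevant order is not already a multiple of $q$, with the pole-of-$b$ case handled by a direct ultrametric computation. Your write-up simply makes the case analysis and the propagation via $\ord_{\aaa_L} = e(\aaa_L/\cdot)\,\ord$ more explicit than the paper's version.
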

\begin{proof}
We again proceed by applying Lemma \ref{le:first} three times.  In the extension $L_1/K$, where $L_1= K(\sqrt[q]{1+ x^{-1}})$,  all the primes that are zeros of $x$ of order not  divisible by $q$ are ramified by Lemma \ref{le:first}, Part \ref{firstit:4}, since for any $K$-prime $\aaa_{K}$ such that $\ord_{\aaa_{K}}x>0$ we have that $\ord_{\aaa_{K}}(1+x^{-1})=\ord_{\aaa_{K}}(x^{-1})<0$. 

  In the extension $L_2/L_1$, where  $L_2=L_1(\sqrt[q]{1+ (bx^q+b^q)^{-1}})$, as before, we ramify all the primes $\aaa_{L_1}$ such that $\ord_{\aaa_{L_1}}(bx^q+b^q)>0$ and $\ord_{\aaa_{L_1}}(bx^q+b^q) \not \equiv 0 \mod q.$ Further, if $\aaa_{L_1}$ is a pole of $bx^q+b^q$ but not a pole of $x$, then it is a pole of $b$ and therefore $\ord_{\aaa_{L_1}}(bx^q+b^q)  = q\ord_{\aaa_{L_1}}b$.
  
  Finally,  $(c+c^{-1})x^{-1}$ has poles at all primes occurring in the divisor of $c$ and not poles of $x$.  Since in $L_1$, and therefore in $L_2$, all zeros of $x$ are of order divisible by $q$, if $c$ has a pole or a zero of degree not divisible by $q$, and the prime in question  is not a pole of $x$, it follows that $(c+c^{-1})x^{-1}$ has a pole of degree not divisible by $q$ at this prime, forcing it to ramify  in the extension $L_2(\sqrt[q]{1+(c+c^{-1})x^{-1}})/L_2$.  Thus, $\ord_{\aaa_L}c \equiv 0 \mod q$ for any prime $\aaa_L$ not dividing $q$ and not a pole of $x$.
\end{proof}

We now consider what happens to factors of $q$ under cyclic extensions of degree $q$.
\begin{proposition}
\label{prop:badprimeq}
If for some elements $x, d, a$ of a number field $K$ containing $\xi_q$ and some $K$-prime $\qq_K$ the following assumptions are true:
\be
\item \label{ass:1} $\qq_K$ is  a factor of $q$,
\item \label{ass:2} $\qq_K$ does not split in the extension $K(\sqrt[q]{a})/K$,
\item \label{ass:3}$\ord_{\qq_K}x <0$,
\item \label{ass:4}$\ord_{\qq_K}d \not \equiv 0 \mod q, $
\item \label{ass:5} $\ord_{\qq_K}d \leq -3\ord_{\qq_K}q$,
\item \label{ass:6} $\ord_{\qq_K}a=0$,
\item \label{ass:last} $q\ord_{\qq_K}x < (q-1)\ord_{\qq_K}d$,
\ee
then for every prime factor $\qq_F$ of $\qq_{K}$ in $F$ we have that
\be
\item $\ord_{\qq_F}x <0$,
\item  $\qq_{F}$ does not split in the extension $F(\sqrt[q]{a})/F$,
 and
\item $\ord_{\qq_F}(dx^q + d^q)\not \equiv 0 \mod q$.
\ee
\end{proposition}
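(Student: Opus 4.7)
The plan is to mirror the proof of Proposition \ref{prop:badprime}, climbing the tower $K \subset F_1 \subset F_2 \subset F$ and verifying at each step that $\qq_K$ splits completely into distinct factors. The essential substitution is that Lemma \ref{le:first}, Part \ref{firstit:3}, which handled primes prime to $q$ in the previous proposition, must be replaced by Lemma \ref{notq}, since $\qq_K$ now lies above $q$. Once complete splitting is established all the way up, each $\qq_F$ above $\qq_K$ will satisfy $e(\qq_F/\qq_K)=f(\qq_F/\qq_K)=1$, so $F_{\qq_F}\cong K_{\qq_K}$, and every local property at $\qq_K$ transfers to $\qq_F$.

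I would first address $F_1=K(\sqrt[q]{1+d^{-1}})$. Assumption \ref{ass:5} says $\ord_{\qq_K}d\leq -3\ord_{\qq_K}q$, hence $\ord_{\qq_K}(d^{-1})\geq 3\ord_{\qq_K}q$, and Lemma \ref{notq} gives complete splitting of $\qq_K$ in $F_1/K$. Next, for $F_2=F_1(\sqrt[q]{1+(dx^q+d^q)^{-1}})$, the key calculation is the order of $dx^q+d^q$ at $\qq_K$. From Assumption \ref{ass:last} one obtains $\ord_{\qq_K}(dx^q)=\ord_{\qq_K}d+q\ord_{\qq_K}x<q\ord_{\qq_K}d=\ord_{\qq_K}(d^q)$, so the pole of $dx^q$ strictly dominates and $\ord_{\qq_K}(dx^q+d^q)=\ord_{\qq_K}d+q\ord_{\qq_K}x$. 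Combined with Assumption \ref{ass:5} and $\ord_{\qq_K}x<0$, this yields $\ord_{\qq_{F_1}}((dx^q+d^q)^{-1})\geq 3\ord_{\qq_K}q=3\ord_{\qq_{F_1}}q$, and Lemma \ref{notq} again gives complete splitting of $\qq_{F_1}$ in $F_2/F_1$. Finally, at $F=F_2(\sqrt[q]{1+(a+a^{-1})d^{-1}})$, Assumption \ref{ass:6} gives $\ord_{\qq_{F_2}}a=0$ and hence $\ord_{\qq_{F_2}}(a+a^{-1})\geq 0$, so $\ord_{\qq_{F_2}}((a+a^{-1})d^{-1})\geq -\ord_{\qq_K}d\geq 3\ord_{\qq_{F_2}}q$, and Lemma \ref{notq} applies once more.

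With complete splitting of $\qq_K$ preserved all the way up to $F$, the three conclusions now fall out. The order of $x$ transfers verbatim: $\ord_{\qq_F}x=\ord_{\qq_K}x<0$. Non-splitting of $\qq_F$ in $F(\sqrt[q]{a})/F$ follows because $F_{\qq_F}\cong K_{\qq_K}$, which reduces the local behavior of $a$ over $F_{\qq_F}$ to Assumption \ref{ass:2}. And $\ord_{\qq_F}(dx^q+d^q)=\ord_{\qq_K}d+q\ord_{\qq_K}x\equiv \ord_{\qq_K}d\not\equiv 0\bmod q$ by Assumption \ref{ass:4}. The main technical point requiring care is the order comparison at the $F_2$-step: one must use Assumption \ref{ass:last} to ensure the pole of $dx^q$ strictly dominates that of $d^q$ so that no cancellation occurs; everything else is a clean application of Lemma \ref{notq} level by level.
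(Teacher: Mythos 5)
Your proposal is correct and follows essentially the same route as the paper's proof: climb the tower $K \subset F_1 \subset F_2 \subset F$, use the congruence $\equiv 1 \bmod q^3$ (via Hensel) to force complete splitting of the factor of $q$ at each step, and then transfer all local data to $\qq_F$ through the resulting isomorphism $F_{\qq_F}\cong K_{\qq_K}$. The only cosmetic difference is that you cite Lemma \ref{notq} where the paper invokes Corollary \ref{cor:q-thpower} directly; since the lemma is an immediate packaging of that corollary, this is inessential, and your explicit order estimate at the $F/F_2$ step is in fact slightly more careful than the paper's.
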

\begin{proof}
First of all we note that 
$F=K(\sqrt[q]{1+ d^{-1}},\sqrt[q]{1+ (dx^q+d^q)^{-1}},\sqrt[q]{1+(a+a^{-1})d^{-1}}).$  Next we observe that over the $\qq_K$-adic completion $K_{\qq_K}$ of $K,$ a $q$-th root of $a$ generates an unramified extension of degree $q$.  Further, if  $G/K$ is a finite extension, where $\qq_K$ has a local degree one  (i.e. $e=f=1$)  factor $\qq_G$, then $G_{\qq_G}\cong K_{\qq_K},$ and   a $q$-th root of $a$ generates an unramified extension of degree $q$ over $G_{\qq_G},$ where $\qq_G$ does not split. 

 Now note that by Assumption \ref{ass:5}, we have that  $\ord_{\qq_K}d \leq -3\ord_{\qq_K}q$, and therefore by Corollary \ref{cor:q-thpower}  we have that $\qq_{K}$ splits completely into distinct factors in the extension $F_1/K$. (We remind the reader that $F_1= K(\sqrt[q]{1+ d^{-1}})$.)  Thus, in $F_1$ we have that $\ord_{\qq_{F_1}}x <0$, $\ord_{\qq_{F_1}}d \not \equiv 0 \mod q$ and $\qq_{F_1}$ has a factor of relative degree $q$  in the extension generated by adjoining $\sqrt[q]{a}$ to  $F_1$  for any $\qq_{F_1} \in \calC_{F_1}(\qq_K)$.  Further, by Assumption \ref{ass:last},
\[
q\ord_{\qq_K} x+\ord_{\qq_K}d < q\ord_{\qq_K}d \leq-3q\ord_{\qq_K}q ,
\]
and therefore
 \[
 \ord_{\qq_{F_1}}(dx^q+d^q) =\ord_{\qq_{F_1}}d +q\ord_{\qq_{F_1}}x<-3q\ord_{\qq_K}q <0.
 \]
  Further, by Assumption \ref{ass:4} we have that $\ord_{\qq_{F_1}}(dx^q+d^q) \not \equiv 0 \mod q$.  Applying Corollary \ref{cor:q-thpower} again, this time over the field $F_2 =F_1(\sqrt[q]{1+ (dx^q+d^q)^{-1}})$, we see that in the extension $F_2/F_1$, the $F_1$- prime $\qq_{F_1}$ splits completely into distinct factors. Consequently, any $\qq_{F_2}$ has  a factor of relative degree $q$  in the extension generated by adjoining $\sqrt[q]{a}$ to  $F_2$, while  $\ord_{\qq_{F_2}}(dx^q+d^q) \not \equiv 0 \mod q$ and  $\ord_{\qq_{F_2}}(dx^q+d^q) <0.$   
  
  Since, by assumption, $\ord_{\qq_{K}}a =0$ and therefore $\ord_{\qq_{F_2}}a =0$, by Corollary \ref{cor:q-thpower} one more time, $\qq_{F_2}$ will split completely into distinct factors in the extension $F/F_2,$ (here we remind the reader that  $F=F_2(\sqrt[q]{1+(a+a^{-1})d^{-1}})$) and,  as before, this would imply that any $\qq_F$ will have a factor of relative degree $q$  in the extension generated by adjoining $\sqrt[q]{a}$ to  $F$, while  $\ord_{\qq_F}(dx^q+d^q) \not \equiv 0 \mod q.$ So in particular, $a$ is not a $q$-th power in $F$. 
  
\end{proof}

Now a $q$-``analog'' of Proposition \ref{prop:fixorder}.
\begin{proposition}
\label{prop:fixorderq}
Under assumptions of Lemma \ref{prop:badprimeq},   for any $F$-prime $\aaa_F$  that is not a pole of $d$ and is not a pole of $x$, the following statements hold:
\be
\item $\ord_{\aaa_F}d\equiv 0 \mod q$;
\item $\ord_{\aaa_F}a \equiv 0 \mod q$;
\item $\ord_{\aaa_F}(dx^q+d^q) \equiv 0 \mod q$.
\ee
\end{proposition}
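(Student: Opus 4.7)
The plan is to mirror the proof of Proposition \ref{prop:fixorder} almost verbatim, applying Lemma \ref{le:first}, Part \ref{firstit:4} at each of the three stages $K \subset F_1 \subset F_2 \subset F$. At each stage a $q$-th root of an expression of the form $1+u$ is adjoined, and any prime where $u$ has a negative order not divisible by $q$ ramifies completely, so that in the larger field the order of the offending element acquires an extra factor of $q$.

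For (1), in $F_1/K$ any $K$-prime $\aaa_K$ that is a zero of $d$ of order not divisible by $q$ gives $\ord_{\aaa_K}(1+d^{-1}) = -\ord_{\aaa_K} d \not\equiv 0 \pmod q$, and so ramifies completely. Consequently, in $F_1$, and hence in $F_2$ and $F$, every zero of $d$ has order divisible by $q$; combined with the hypothesis that $\aaa_F$ is not a pole of $d$, this yields (1). For (3), in $F_2/F_1$ the same mechanism applied to $1+(dx^q+d^q)^{-1}$ forces every zero of $dx^q+d^q$ of non-$q$-divisible order to ramify; since $\aaa_F$ is a pole of neither $d$ nor $x$ we have $\ord_{\aaa_F}(dx^q+d^q) \geq 0$, so the only nontrivial case is a zero, and (3) follows.

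Claim (2) is the only mildly delicate point, because the element adjoined at the final stage, $1+(a+a^{-1})d^{-1}$, entangles $a$ with $d$. I would combine Step~1 (so that $\ord_{\aaa_{F_2}} d \equiv 0 \pmod q$ whenever $\aaa_{F_2}$ is not a pole of $d$) with the elementary observation that $\ord(a+a^{-1}) = -|\ord a|$ whenever $\ord a \neq 0$. For any $\aaa_{F_2}$ that is not a pole of $d$ or $x$ and has $\ord_{\aaa_{F_2}} a \not\equiv 0 \pmod q$, this gives
\[
\ord_{\aaa_{F_2}}\!\bigl((a+a^{-1})d^{-1}\bigr) \;\equiv\; -|\ord_{\aaa_{F_2}} a| \;\not\equiv\; 0 \pmod q,
\]
and this order is negative, so $\aaa_{F_2}$ ramifies completely in $F/F_2$, forcing the order of $a$ in $F$ to be a multiple of $q$. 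This intertwining of $a$ with $d$ is the one spot where the $q$-analog needs a little more care than the original proposition; once Step~1 is in hand, everything else falls into line.
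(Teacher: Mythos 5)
Your proposal is correct and follows essentially the same route as the paper: three applications of Lemma \ref{le:first}, Part \ref{firstit:4} in the tower $K \subset F_1 \subset F_2 \subset F$, ramifying in turn the non-$q$-divisible zeros of $d$, of $dx^q+d^q$, and of $a$. Your treatment of part (2) — using that zeros of $d$ already have $q$-divisible order in $F_2$, together with $\ord(a+a^{-1}) = -\lvert\ord a\rvert$, to conclude that $(a+a^{-1})d^{-1}$ has a pole of non-$q$-divisible order wherever $a$ has a zero or pole of non-$q$-divisible order — is exactly the point the paper makes, and your observation that the hypotheses rule out poles of $dx^q+d^q$ streamlines the middle step slightly without changing the argument.
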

\begin{proof}
We again proceed by applying Lemma \ref{le:first} three times.  In the extension $F_1/K$, where $F_1= K(\sqrt[q]{1+ d^{-1}})$,  all primes that are  zeros of $d$ not of order divisible by $q$ are ramified by Lemma \ref{le:first}, Part \ref{firstit:4}, since for any $K$-prime $\aaa_{K}$ such that $\ord_{\aaa_{K}}d>0$ we have that $\ord_{\aaa_{K}}(1+d^{-1})<0$. 

  In the extension $F_2/F_1$, where  $F_2=F_1(\sqrt[q]{1+ (dx^q+d^q)^{-1}})$, as before, we ramify all the primes $\aaa_{F_1}$ such that $\ord_{\aaa_{F_1}}(dx^q+d^q)>0$ and
$\ord_{\aaa_{F_1}}(dx^q+d^q) \not \equiv 0 \mod q.$
 Further, if $\aaa_K$ is a pole of $dx^q+d^q$  but $\aaa_{K}$ is not a pole of $d$, then $\ord_{\aaa_K}(dx^q+d^q)  = q\ord_{\aaa_K}x$, and if for some pole $\qq_K$ of $d$ we have that $\ord_{\qq_K}x >0$, then $\ord_{\qq_K}(dx^q+d^q)  = q\ord_{\qq_K}d$.

  Finally,  $(a+a^{-1})d^{-1}$ has poles at all primes occurring in the divisor of $a$ and not poles of $d$.  Further in $F_2$ all zeros of $d$ are of orders divisible by $q$.  Thus if $a$ has a pole or a zero of degree not divisible by $q$,  it follows that $(a+a^{-1})d^{-1}$ has a pole of degree not divisible by $q$ at this prime, forcing it to ramify  in the extension $F_2(\sqrt[q]{1+(a+a^{-1})d^{-1}})/F_2$.  Thus, $\ord_{\aaa_F}a \equiv 0 \mod q$ for any prime $\aaa_F$ as described in the statement of the proposition.
\end{proof}

The lemma below considers some archimedean completions of a number field.
\begin{lemma}
\label{le:real}
If $c \in \Omega_2(K)$ and  $M = K(\sqrt{c})$, then any archimedean completion of $M$ is isomorphic to the corresponding archimedean completion of $K$.
\end{lemma}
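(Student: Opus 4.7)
The plan is to fix an archimedean place $w$ of $M$, let $v$ denote its restriction to $K$, and split into two cases according to whether $v$ is real or complex. In each case I will identify the completion $M_w$ with the completion $K_v$, where the identification in the real case will depend critically on the sign condition built into $\Omega_2(K)$.

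Concretely, I would first handle the complex case. If $v$ is complex, then $K_v \cong \C$, and since $K_v \hookrightarrow M_w$ is an isometric embedding of a complete algebraically closed field into an archimedean local field, we must have $M_w = K_v$. So the completions agree automatically, with no input from $\Omega_2(K)$.

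Next I would handle the real case, which is the substantive one. If $v$ is real, choose an embedding $\sigma : K \hookrightarrow \R \cap \tilde\Q$ realizing $v$. By the hypothesis $c \in \Omega_2(K)$ we have $\sigma(c) \geq 0$, so $\sqrt{\sigma(c)} \in \R$. Any extension of $\sigma$ to $M = K(\sqrt{c})$ must send $\sqrt{c}$ to one of the two square roots $\pm\sqrt{\sigma(c)}$, both of which lie in $\R$. Therefore every archimedean place $w$ of $M$ lying over $v$ corresponds to a real embedding of $M$, and $M_w \cong \R \cong K_v$.

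The only subtlety is to note that this argument covers all archimedean $w$: an archimedean place of $M$ necessarily restricts to an archimedean place of $K$, so enumerating the two cases above is exhaustive. There is no real obstacle here; once the sign condition $\sigma(c)\ge 0$ encoded in $\Omega_2(K)$ is in hand, the square-root extraction in the real completion is immediate, and the complex case is a triviality because $\C$ is algebraically closed.
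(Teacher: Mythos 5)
Your proof is correct and takes essentially the same approach as the paper's: in both, the only substantive point is that a real embedding of $K$ sends $c$ to a non-negative real (by the definition of $\Omega_2(K)$), so $\sqrt{c}$ extends to a real element, and the complex case is automatic. You phrase it in terms of places and completions while the paper works directly with embeddings into $\tilde\Q\cap\R$ versus not, but the content is identical.
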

\begin{proof}
Let $\sigma$ be an embedding of $M$ into $\tilde \Q$.    If $\sigma(M) \subset \tilde \Q \cap \R$, then the archimedean completion of $\sigma(M)$ is isomorphic to $\R$,  and the completion is isomorphic to $\C$ otherwise.  Therefore to prove the lemma, it is enough to show that whenever $\sigma(K) \subset \tilde \Q \cap \R$, we also have $\sigma(M) \subset \tilde \Q \cap \R$.   This implication follows from the fact that whenever $\sigma(K) \subset \tilde \Q \cap \R$, we have $\sigma(c) > 0$ and therefore $\sqrt{\sigma(c)} \in \R$.
\end{proof}
We will need the two lemmas below when analyzing what happens to factors of $q$ in number field extensions of degree $q$.
\begin{lemma}%
\label{le:existprime}
If $U/K$ is a Galois extension of number fields, $F/U$ is a cyclic number field extension, and the extension
$F/K$ is Galois, then there are infinitely many primes of $U$ not splitting in the extension $F/U$ and lying above a prime of $K$ splitting completely in $U$.
\end{lemma}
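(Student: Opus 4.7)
The plan is to apply Chebotarev's density theorem to the Galois extension $F/K$. Set $G=\Gal(F/K)$ and $H=\Gal(F/U)$. Since $F/K$ is Galois and $U/K$ is Galois, $H$ is a normal subgroup of $G$, and since $F/U$ is cyclic, $H$ is cyclic. The quotient $G/H$ is naturally identified with $\Gal(U/K)$.

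First I would pick a generator $\sigma$ of $H$ and consider its conjugacy class $C$ in $G$. The key observation is that $C\subseteq H$: for any $g\in G$, the element $g\sigma g^{-1}$ lies in $H$ because $H$ is normal, and it has the same order as $\sigma$, hence also generates $H$ (conjugation by $g$ restricts to an automorphism of the cyclic group $H$ and so sends generators to generators). Thus every element of $C$ is a generator of $H$. By Chebotarev's density theorem applied to $F/K$, there are infinitely many primes $\pp_K$ of $K$ that are unramified in $F/K$ and whose Frobenius conjugacy class in $G$ equals $C$.

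For each such $\pp_K$, choose a prime $\pp_F$ of $F$ above $\pp_K$ and let $\sigma'\in C$ be its Frobenius in $G$. Since $\sigma'\in H$, its image in $G/H=\Gal(U/K)$ is trivial, which means the decomposition group of $\pp_U:=\pp_F\cap U$ over $\pp_K$ is trivial; since $\pp_K$ is unramified in $U/K$ (as it is unramified in $F/K$), this forces $\pp_K$ to split completely in $U/K$. On the other hand, the decomposition group of $\pp_F$ over $\pp_U$ in $H=\Gal(F/U)$ is generated by the Frobenius $\sigma'$, which generates $H$; hence this decomposition group is all of $H$, and $\pp_U$ has a unique prime above it in $F$, i.e.\ $\pp_U$ does not split in $F/U$.

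There is essentially no obstacle here beyond verifying the set-theoretic claim that conjugates of a generator of a normal cyclic subgroup are again generators of that subgroup, which I highlighted above; the rest is a standard packaging of Chebotarev together with the behaviour of decomposition groups in towers.
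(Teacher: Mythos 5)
Your proof is correct and follows essentially the same approach as the paper: apply Chebotarev to $F/K$ with a generator $\sigma$ of $\Gal(F/U)$ as the target Frobenius, then read off the splitting behaviour from the decomposition groups. The paper phrases this in terms of primes of $F$ with Frobenius exactly $\sigma$, which lets it skip the observation you carefully make that all $G$-conjugates of $\sigma$ are again generators of the normal cyclic subgroup $H$; that verification is a useful detail to record, but the underlying argument is the same.
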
%
\begin{proof}%
If $\sigma$ is a generator of $\Gal(F/U)$, then any prime of $F$ whose Frobenius  over $K$ is
\[
\sigma \in \Gal(F/U) \subset  \Gal(F/K)
\]
 will have the desired property.  Now Tchebotarev Density Theorem tells us that there are infinitely many such primes.
\end{proof}%

\begin{lemma}%
\label{le:frob}%
Let $F/U$ be a cyclic extension of number number fields such that for some rational prime $q$ we have that $[F:U]\equiv 0 \mod q^m$. Let
$N$ be the unique subfield of $F$ containing $U$ such that $[N:U]=q^m$. Let $\pp_F$ be a prime of $F$ and let $\pp_U$ be
the $U$-prime below it.  If  $\sigma$ is  the Frobenius automorphism of $\pp_F$ and $\sigma$ is not a $q$-th power in $\Gal(F/U)$, then $\pp_U$ does not split in the extension $N/U$.
\end{lemma}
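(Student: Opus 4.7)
The strategy is to translate the hypothesis on $\sigma$ into a statement about the image $\bar\sigma$ of $\sigma$ in the quotient group $\Gal(N/U)$, and then to use the general compatibility of decomposition groups with subfields to conclude that $\bar\sigma$ generates the decomposition group at $\pp_N := \pp_F\cap N$, which forces $\pp_U$ not to split in $N/U$.

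First I would fix a generator $\tau$ of the cyclic group $\Gal(F/U)$, set $n := [F:U]$, and write $\sigma = \tau^k$. By Galois theory, $\Gal(F/N) = \langle \tau^{q^m}\rangle$ is the unique subgroup of index $q^m$, so $\Gal(N/U) \cong \langle \tau\rangle/\langle \tau^{q^m}\rangle$ is cyclic of order exactly $q^m$. An element $\tau^j$ is a $q$-th power in $\Gal(F/U)$ iff $q\mid j$ in $\Z/n\Z$, and, because $q^m\mid n$, the same congruence criterion governs whether the image of $\tau^j$ in $\Z/q^m\Z$ is a $q$-th power. Hence the hypothesis $q\nmid k$ (equivalent to $\sigma$ not being a $q$-th power in $\Gal(F/U)$) transfers directly to $\bar\sigma$: it is not a $q$-th power in $\Gal(N/U)\cong \Z/q^m\Z$. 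But in such a cyclic $q$-group, any element not divisible by $q$ is a unit of $\Z/q^m\Z$, hence of order $q^m$, and therefore generates $\Gal(N/U)$.

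Finally I would invoke the standard fact that the decomposition group $D(\pp_N/\pp_U)$ is the image of $D(\pp_F/\pp_U)$ under the restriction map $\Gal(F/U)\to \Gal(N/U)$; since $\sigma\in D(\pp_F/\pp_U)$, its image $\bar\sigma$ lies in $D(\pp_N/\pp_U)$, and because $\bar\sigma$ generates all of $\Gal(N/U)$ we conclude $D(\pp_N/\pp_U)=\Gal(N/U)$. Equivalently, $\pp_N$ is the unique prime of $N$ above $\pp_U$, so $\pp_U$ does not split in $N/U$. There is no real obstacle here; the argument is a routine group-theoretic translation together with the compatibility of Frobenius with restriction, and the only mild care needed is noting that even in the presence of ramification at $\pp_U$, having $\bar\sigma$ generate all of $\Gal(N/U)$ a fortiori forces the full decomposition subgroup at $\pp_N$ to equal $\Gal(N/U)$.
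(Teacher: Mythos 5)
Your argument is correct and follows essentially the same route as the paper: both proofs reduce the hypothesis to the statement that the image $\bar\sigma$ of $\sigma$ generates $\Gal(N/U)$ (you via identifying $\Gal(F/U)\cong\Z/n\Z$ and $\Gal(N/U)\cong\Z/q^m\Z$ and tracking divisibility by $q$, the paper via characterizing $\Gal(F/N)$ as the subgroup of $q^m$-th powers and computing the order of $\sigma_{|N}$), and then both invoke compatibility of the decomposition group with restriction to conclude $D(\pp_N/\pp_U)=\Gal(N/U)$. The only cosmetic difference is that your version makes the elementary $\Z/n\Z$ computation explicit.
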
%
\begin{proof}%
Observe that $\Gal(F/N)$ is the set of all elements of the Galois group that are $q^m$-th powers.  Thus, since $\sigma$ is not a $q$-th power in $\Gal(F/U)$, we must have that $q^m$ is the smallest positive power $r$ of $\sigma$ such that $\sigma^r \in \Gal(F/N)$.  Therefore, we have that  $\sigma_{|N}$ has order $q^m$ and thus generates the Galois group of $N$ over $U$.  Hence, the decomposition group of $\pp_F \cap N=\pp_N$ is the Galois group of $N/U$, and $\pp_U$ does not split in the extension $N/U$.
\end{proof}%
We now construct a cyclic extension of degree equal to a power of $q$ where  $q$ can have an arbitrarily high relative degree and no ramified factors.
\begin{lemma}
\label{le:qextensions}%
If $q$  is a rational prime, $m \in\Z_{>0}$, then there exists a totally real cyclic extension of $\Q$ of degree $q^m$ where  $q$ does not split.
\end{lemma}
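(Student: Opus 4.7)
The natural strategy is to exhibit the required field explicitly inside a cyclotomic tower, splitting into two cases depending on the parity of $q$, and to verify in both cases that $q$ is totally ramified (which is a stronger statement than merely ``does not split'').

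\textbf{Case $q$ odd.} I would take $N$ to be the unique subfield of $\Q(\xi_{q^{m+1}})$ of degree $q^m$ over $\Q$. Since $q$ is odd, the Galois group
\[
G = \Gal(\Q(\xi_{q^{m+1}})/\Q) \cong (\Z/q^{m+1}\Z)^\times
\]
is cyclic of order $q^m(q-1)$, so it has a unique subgroup $H$ of order $q-1$; set $N = \Q(\xi_{q^{m+1}})^H$. Then $\Gal(N/\Q)\cong G/H$ is cyclic of order $q^m$, as required. For total reality, note that complex conjugation $\tau \in G$ has order $2$; since $q-1$ is even and $G$ is cyclic, the subgroup $H$ of order $q-1$ contains the unique element of order $2$, namely $\tau$, so $\tau$ fixes $N$ and $N \subset \tilde\Q \cap \R$. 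For the non-splitting of $q$, recall that $q$ is totally ramified in $\Q(\xi_{q^{m+1}})/\Q$ with ramification index $\varphi(q^{m+1})=q^m(q-1)$; multiplicativity of $e$ in towers forces $q$ to be totally ramified in every intermediate field, and in particular in $N/\Q$ with ramification index $q^m$, so there is a unique prime of $N$ above $q$.

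\textbf{Case $q = 2$.} The group $(\Z/2^{m+2}\Z)^\times \cong \Z/2\Z \times \Z/2^{m}\Z$ is not cyclic, so we cannot proceed exactly as above; instead, I would take $N = \Q(\xi_{2^{m+2}})^+$, the maximal totally real subfield of $\Q(\xi_{2^{m+2}})$. Its Galois group over $\Q$ is $(\Z/2^{m+2}\Z)^\times/\{\pm 1\} \cong \Z/2^m\Z$, which is cyclic of the desired order $2^m$. Total reality holds by construction. As in the odd case, $2$ is totally ramified in $\Q(\xi_{2^{m+2}})/\Q$ with ramification index $2^{m+1}$, and therefore totally ramified in $N/\Q$ with ramification index $2^m$, so $2$ does not split in $N$.

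\textbf{Main obstacle.} There is no deep obstacle: the only subtlety is the failure of $(\Z/2^n\Z)^\times$ to be cyclic for $n \geq 3$, which forces a separate (but equally explicit) construction in the $q=2$ case. In both cases the key structural input is that $q$ is totally ramified throughout the cyclotomic tower at $q$, so ``not splitting'' comes for free once the degree and total reality are arranged.
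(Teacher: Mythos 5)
Your proof is correct as a proof of the lemma as stated, but it takes a genuinely different route from the paper, and the difference matters for how the lemma is later applied. The paper's argument picks an auxiliary prime $\ell$ (via Chebotarev/Lemma~\ref{le:existprime}) with $\ell \equiv 1 \bmod q^m$ and $q$ not a $q$-th power mod $\ell$, and then takes the unique degree-$q^m$ subfield of $\Q(\xi_\ell)$; the Frobenius argument and Lemma~\ref{le:frob} then show $q$ is \emph{inert} in that subfield, i.e.\ unramified with a single prime of residue degree $q^m$. You instead work inside the $q$-power cyclotomic tower ($\Q(\xi_{q^{m+1}})$, resp.\ $\Q(\xi_{2^{m+2}})^+$), where $q$ is \emph{totally ramified}; this likewise forces a single prime above $q$, so it does prove ``does not split'' in the sense the paper uses (one prime above $q$), and your construction is clean and avoids the Chebotarev input.

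The trade-off you should be aware of: the paper later invokes this lemma (in Section~\ref{onlyq}) to produce $F$ with $q$ not split, and then uses Proposition~8 of Chapter~II, \S4 of \cite{L} to conclude that $KF/K$ is \emph{unramified} at factors of $q$. That step relies on $q$ being unramified in $F/\Q$, which holds for the paper's inert construction but fails for yours, where $q$ is totally ramified. So while your proof establishes the literal statement of the lemma, substituting your $F$ into the downstream argument would break it; the paper's proof really produces a strictly stronger conclusion ($q$ inert and unramified) than the lemma states, and that extra strength is used. It would be worth either strengthening the lemma's conclusion (``$q$ is inert'') or switching to a construction like the paper's; if you do pursue the latter, note that the paper's own choice of $\ell$ needs care when $q=2$ and $m\geq 3$, since then $\sqrt{2}\in\Q(\xi_{2^m})$ and the second condition on $\ell$ becomes vacuous, so a slightly different auxiliary extension (e.g.\ using $\sqrt[q]{q}$ over a larger cyclotomic field) is needed to ensure $q$ is not a $q$-th power mod $\ell$ and to secure total reality.
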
%
\begin{proof}%
Let $\ell$ be a rational prime satisfying the following conditions:
\be%
\item $\ell$ splits completely in    $\Q(\xi_{q^m})/\Q$. %
\item Factors of $\ell$ in $\Q(\xi_{q^m})$ do not split in the extension $\Q(\xi_{q^m}, \sqrt[q]{q})/\Q(\xi_{q^m})$.
\ee%
(Observe that by Lemma \ref{le:existprime} there are infinitely many such $\ell$'s.)  It follows that  $\ell\equiv 1\mod q^m$, but $q$ is not a $q$-th power mod $\ell$.  Indeed, since both bases $\{1, \xi_q,\ldots, \xi_q^{(q-1)q^{m-1}}\}$ and $\{1, \sqrt[q]{q},\ldots, \sqrt[q]{q^{q-1}}\}$ are integral bases with respect to $\ell$ and all of its factors,  the factorization of $\ell$ and its factors in the extensions $\Q(\xi_{q^m})/\Q$  and $\Q(\xi_{q^m}, \sqrt[q]{q})/\Q(\xi_q)$ corresponds to the factorization of the respective minimal polynomials modulo $\ell$.  Consequently, $\Z/\ell$ contains a $q^m$-th root of unity, so that $q^m | (\ell-1)$, and the polynomial $T^q-q$ has no roots modulo any factors $\ell$ in $\Q(\xi_q)$.  

  Now consider
the extension $\Q(\xi_{\ell})/\Q$ and note that it is  of degree divisible by $q^m$.   If $\tau$ is the Frobenius of $q$, then $\tau(\xi_{\ell})=\xi_{\ell}^{q}$ and $\tau$ is not a $q$-th power in $\Gal(\Q(\xi_{\ell})/\Q)$.  Indeed, suppose $\tau = \sigma^q$ for some $\sigma \in  \Gal(\Q(\xi_{\ell})/\Q)$.  Let $r$ be a positive integer such that $\sigma(\xi_{\ell})=\xi_{\ell}^r$ and therefore $\xi_{\ell}^q=\tau(\xi_{\ell})=\sigma^q(\xi_{\ell})=\xi_{\ell}^{r^q}$ implying $q \equiv r^q \mod \ell$ in contradiction of our assumption on $\ell$ and $q$.  Therefore, by Lemma \ref{le:frob}, we conclude that  $q$ will not split in the unique degree $q^m$ extension of $\Q$ contained in $\Q(\xi_{\ell})$.  
\end{proof}
We now use the lemma above to construct a cyclic extension of a number field where $q$ has relative degree $q$ and no ramified factors. We do this in two steps.  The first step is the lemma below.
\begin{lemma}
\label{le:staycyc}
If $G$ is algebraic over $\Q$, $H$ a number field with $H/\Q$ cyclic, then $GH/G$ is cyclic with $[GH:G] | [H:\Q]$.
\end{lemma}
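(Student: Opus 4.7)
The plan is to use the standard restriction map from Galois theory. First I would observe that $H/\Q$ is finite and Galois (cyclic implies Galois, and $H$ is a number field), so $H$ is the splitting field over $\Q$ of some separable polynomial $f(X) \in \Q[X]$. Since $G \supseteq \Q$, the compositum $GH$ is then the splitting field of the same polynomial $f(X)$ over $G$, which makes $GH/G$ a finite Galois extension; moreover $[GH:G] \leq [H:\Q]$ because $GH = G(\alpha)$ where $\alpha$ is a primitive element for $H/\Q$, and $\alpha$ satisfies a polynomial of degree $[H:\Q]$ over $\Q \subseteq G$.

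Next I would introduce the restriction homomorphism
\[
\rho: \Gal(GH/G) \longrightarrow \Gal(H/\Q), \qquad \sigma \mapsto \sigma|_H.
\]
This is well-defined: any $\sigma \in \Gal(GH/G)$ fixes $\Q$ (since $\Q \subseteq G$), and permutes the roots of $f(X)$ lying in $GH$, which are precisely the conjugates of $\alpha$; since $H/\Q$ is Galois these roots all lie in $H$, so $\sigma(H) = H$. The map $\rho$ is a group homomorphism by construction.

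To see $\rho$ is injective, suppose $\sigma \in \ker \rho$. Then $\sigma$ fixes $H$ pointwise; but $\sigma$ also fixes $G$ pointwise by assumption, so $\sigma$ fixes the compositum $GH$ pointwise, forcing $\sigma = \id$.

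Therefore $\Gal(GH/G)$ embeds as a subgroup of the cyclic group $\Gal(H/\Q)$. Subgroups of cyclic groups are cyclic, and their orders divide the order of the ambient group, so $\Gal(GH/G)$ is cyclic with $[GH:G] = |\Gal(GH/G)|$ dividing $|\Gal(H/\Q)| = [H:\Q]$. There is no real obstacle here; the only point to keep in mind is that $G$ is allowed to be an infinite algebraic extension of $\Q$, so one must be careful to notice that finiteness and the Galois property of $GH/G$ come from the finiteness and Galois property of $H/\Q$ together with the description of $GH$ as a splitting field over $G$.
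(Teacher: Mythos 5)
Your proof is correct, but it takes a somewhat different route from the paper's. The paper explicitly introduces the intermediate field $A = G \cap H$ and shows $[GH:G] = [H:A]$ by a minimal-polynomial argument: if $\alpha$ generates $H$ over $\Q$ (hence $GH$ over $G$), then the coefficients of the minimal polynomial of $\alpha$ over $G$ are symmetric functions of conjugates of $\alpha$, all of which lie in $H$ by normality, hence those coefficients lie in $G \cap H = A$; this forces the minimal polynomials over $G$ and over $A$ to coincide, giving both the degree divisibility and a concrete isomorphism $\Gal(GH/G) \cong \Gal(H/A)$. You instead define the restriction map $\rho : \Gal(GH/G) \to \Gal(H/\Q)$, check it is well-defined (normality of $H/\Q$) and injective ($GH$ is generated by $G$ and $H$), and then invoke the subgroup structure of finite cyclic groups. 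The two arguments are essentially equivalent in content --- the image of your $\rho$ is precisely $\Gal(H/A)$ --- but your phrasing via the restriction embedding is the more streamlined way to reach the conclusion, whereas the paper's formulation pins down the exact index $[H:A]$ and the intersection field explicitly, which is the sort of information that can be useful when one needs finer control over where the degree drop comes from. Both handle the possibility that $G$ is an infinite extension of $\Q$ correctly, since the key finiteness is supplied by $H/\Q$.
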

\begin{proof}
If $A=G\cap H$, then, since $H/\Q$ is Galois,  $[H:A]=[GH:G]$ and thus   $[GH:G]$ divides $ [H:\Q].$  Indeed, let $\alpha \in H$ generate $H$ over $\Q$ and therefore also $GH$ over $G$, and let $a_0 +a_1T + \ldots +T^r$ be the monic irreducible polynomial of $\alpha$ over $G$.  Since all the conjugates of $\alpha$ over $\Q$ are in $H$, all the conjugates of $\alpha$ over $G$ are in $H$, and thus $a_0,\ldots, a_{r-1} \in H$ and hence in $A$.  So the degree of $\alpha$ over $G$ is at least as large as the degree of $\alpha$ over $A$.  Since $A \subseteq G$, these degrees must be equal.  

  Further $H/A$ is again a cyclic extension, and all the  the conjugates of $\alpha$ over $A$ and over $G$ are the same.  Hence, $\Gal(GH/G) \cong \Gal(H/A)$ and we can conclude that the extension $GH/G$ is cyclic.
\end{proof}

This is the second step of our construction.
\begin{lemma}
\label{le:notsplit}
Let $G$ be a number field such that for some prime $\pp_G$ of $G$ lying above a rational prime $\pp_{\Q}$ we have that $\ord_q(f(\pp_G/\pp_{\Q}))=m$.  Suppose now that $H$ is a cyclic extension of $\Q$ of degree $q^r$ with $r >m$, where $\pp_{\Q}$ does not split.  Let $GH$ be the field compositum of $G$ and $H$ inside the chosen algebraic closure of $\Q$.  Under these assumptions, there exists a field $\hat G$ such that $G \subseteq \hat G \subset GH$ and $GH/\hat G$ is a cyclic extension  of degree $q$ where no factor of $\pp_H$ splits.
\end{lemma}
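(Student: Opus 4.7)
The plan is to take $\hat G$ to be the unique subfield of $GH$ containing $G$ with $[GH:\hat G]=q$, and to verify it has the required properties. By Lemma \ref{le:staycyc}, $GH/G$ is cyclic with $[GH:G]$ dividing $[H:\Q]=q^r$, so $[GH:G]=q^s$ for some $0\leq s\leq r$. One first needs $s\geq 1$, that is, $H\not\subseteq G$; otherwise $\pp_G$ would lie above $\pp_H$, and the factorization $f(\pp_G/\pp_{\Q})=f(\pp_G/\pp_H)\,f(\pp_H/\pp_{\Q})$ combined with $e(\pp_H/\pp_{\Q})\,f(\pp_H/\pp_{\Q})=q^r$ and $r>m$ would force substantial ramification of $\pp_{\Q}$ in $H$, which is ruled out by the intended construction (in the setup of Lemma \ref{le:qextensions}, $\pp_{\Q}=q$ is unramified in $H$).

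Assuming $s\geq 1$, let $\hat G$ be the fixed field of the unique subgroup of order $q$ in the cyclic group $\Gal(GH/G)$, so that $[GH:\hat G]=q$ and $GH/\hat G$ is cyclic of prime degree $q$. For any prime $\mathfrak P$ of $GH$ above $\pp_G$, the equality $\mathfrak P\cap H=\pp_H$ is automatic since $\pp_H$ is the only prime of $H$ above $\pp_{\Q}$. Setting $\hat\pp=\mathfrak P\cap\hat G$, non-splitting of $\hat\pp$ in $GH/\hat G$ is equivalent to $D(\mathfrak P/\hat\pp)=\Gal(GH/\hat G)$; since $D(\mathfrak P/\hat\pp)=D(\mathfrak P/\pp_G)\cap\Gal(GH/\hat G)$ and $|\Gal(GH/\hat G)|=q$ is prime, this reduces to $\Gal(GH/\hat G)\subseteq D(\mathfrak P/\pp_G)$. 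Because $\Gal(GH/\hat G)$ is the unique subgroup of order $q$ in the cyclic group $\Gal(GH/G)$, it suffices to prove $q\mid |D(\mathfrak P/\pp_G)|$.

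Establishing this divisibility is the main obstacle. Passing to completions, $GH_{\mathfrak P}=G_{\pp_G}\cdot H_{\pp_H}$, and since $H_{\pp_H}/\Q_{\pp_{\Q}}$ is Galois, the standard compositum formula gives
\[
|D(\mathfrak P/\pp_G)|=[GH_{\mathfrak P}:G_{\pp_G}]=[H_{\pp_H}:H_{\pp_H}\cap G_{\pp_G}].
\]
Since $H_{\pp_H}/\Q_{\pp_{\Q}}$ is cyclic of $q$-power degree $q^r$, the subfield $H_{\pp_H}\cap G_{\pp_G}$ has $q$-power degree $q^j$ over $\Q_{\pp_{\Q}}$, and $q\mid |D(\mathfrak P/\pp_G)|$ amounts to $j\leq r-1$, i.e., $H_{\pp_H}\not\subseteq G_{\pp_G}$. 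If $H_{\pp_H}\subseteq G_{\pp_G}$, the residue field of $H_{\pp_H}$, of $q$-power degree $q^{r_1}$, embeds into the residue field of $G_{\pp_G}$ of degree $f(\pp_G/\pp_{\Q})=q^m u$ with $\gcd(q,u)=1$, forcing $r_1\leq m$; hence $e(\pp_H/\pp_{\Q})=q^{r-r_1}\geq q^{r-m}\geq q$, incompatible with $\pp_{\Q}$ being unramified in $H$ as produced by Lemma \ref{le:qextensions}.
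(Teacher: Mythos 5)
Your proof is correct and shares the paper's overall strategy: take $\hat G$ to be the unique intermediate field with $[GH:\hat G]=q$, and reduce non-splitting to showing that $q$ divides the order of the decomposition group $D(\mathfrak P/\pp_G)$ for each $\mathfrak P$ above $\pp_G$ in $GH$, so that $\Gal(GH/\hat G)$, the unique subgroup of order $q$, lies inside it. Where you differ is in how this divisibility is established. The paper stays global: it reads off $\ord_q f(\mathfrak P/\pp_G)\geq r-m\geq 1$ from the two towers $\pp_\Q\subset\pp_G\subset\mathfrak P$ and $\pp_\Q\subset\pp_H\subset\mathfrak P$, then cites Lang (Ch.\ II, \S4, Prop.\ 8) to conclude $GH/G$ is unramified at factors of $\pp_G$, so $|D(\mathfrak P/\pp_G)|=f(\mathfrak P/\pp_G)$ and the Frobenius has order divisible by $q$. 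You pass to completions and use the compositum formula $|D(\mathfrak P/\pp_G)|=[H_{\pp_H}:H_{\pp_H}\cap G_{\pp_G}]$, a power of $q$, and rule out the degenerate case $H_{\pp_H}\subseteq G_{\pp_G}$ by a residue-field degree comparison. The routes are of comparable weight; yours trades the citation of Lang's unramified-compositum result for the local degree/compositum formula. You are also right to flag that both arguments quietly use that $\pp_\Q$ is actually \emph{unramified} (hence inert) in $H$, not merely non-split: the paper needs this both for $f(\mathfrak P/\pp_\Q)\geq q^r$ and for the appeal to Lang, and it is indeed what Lemma \ref{le:qextensions} delivers in the application. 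One minor remark: your preliminary check that $H\not\subseteq G$ (i.e.\ $s\geq 1$) is subsumed by the divisibility $q\mid|D(\mathfrak P/\pp_G)|$, which already forces $[GH:G]>1$.
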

\begin{proof}
Consider the following field diagram
\begin{center}
\xymatrix{{\pp_{GH} \in GH}& {\pp_G \in G}\ar[l]\\
{\pp_H\in H}\ar[u]&{\pp_\Q \in\Q}\ar[l]\ar[u]}
\end{center}
and observe that $f(\pp_{GH}/\pp_\Q) \geq q^r$, while $\ord_q(f(\pp_G/\pp_{\Q})) =m <r$.  Consequently,
\[
\ord_q(f(\pp_{GH}/\pp_G))>1
\]
and thus $f(\pp_{GH}/\pp_G)>1$.  By Lemma \ref{le:staycyc}, the extension $GH/G$ is cyclic of degree that is a power of $q$.  Further, by Proposition 8, of Chapter II, \S4 of \cite{L}, $GH/G$ is unramified at all the factors of $\pp_G$. Let $\sigma$ be a generator of the $\Gal(GH/G)$ and observe that for some positive integer $i$,  the Frobenius automorphism of any factor $\pp_{GH}$ of $\pp_G$ over $G$ is $\sigma^i \not = \id$ and must be of order divisible by $q$.  Now, if $\hat G \not =GH$ is the fixed field of $\sigma^{\ord \sigma^i/q}$, we have that any factor $\pp_{\hat G}$ of $\pp_G$ in $\hat G$ will not split in the extension $GH/\hat G$ and $[GH:\hat G]=q$.
\end{proof}
Since for any $G$ and $H$ as above,  the field $\hat G$ satisfying  $G \subset \hat G \subset GH$ and $[GH:\hat G]=q$ is unique, we have the following corollary.
\begin{corollary}
\label{cor:notsplit}
Let $G, H$ be as in Lemma \ref{le:notsplit}, and assume additionally that for {\it any} $G$-prime  $\pp_G$ lying above a rational prime $\pp_{\Q}$ we have that $\ord_qf(\pp_G/\pp_{\Q})<[H:\Q]$.  Let $\hat G$ be a subfield of $GH$ such that $G \subset \hat G$ and $[GH:\hat G]=q$.  In this case no $\hat G$-factor of $\pp_\Q$ splits in the extension $GH/\hat G$.
\end{corollary}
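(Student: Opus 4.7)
The plan is to reduce the claim to Lemma \ref{le:notsplit} applied one $G$-prime at a time, using a uniqueness argument to guarantee that all the intermediate fields produced by the lemma coincide with the single field $\hat G$ fixed in the corollary.

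First I would recall from the proof of Lemma \ref{le:notsplit} that $GH/G$ is cyclic of $q$-power degree (via Lemma \ref{le:staycyc} and the fact that $[H:\Q]=q^r$). A finite cyclic $q$-group has a unique subgroup of order $q$, so by Galois correspondence there is a unique intermediate field $\hat G$ with $G \subset \hat G \subset GH$ and $[GH:\hat G]=q$. This is exactly the field named in the corollary statement, and it does not depend on the choice of prime $\pp_G$.

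Next, for each individual $G$-prime $\pp_G$ lying above $\pp_\Q$, the hypothesis $\ord_q f(\pp_G/\pp_\Q)<[H:\Q]$ (i.e., $m<r$ in the notation of Lemma \ref{le:notsplit}) is exactly what is needed to invoke that lemma. Applying it produces an intermediate field with the non-splitting property for factors of $\pp_G$; by the uniqueness established in the previous step, this intermediate field must equal $\hat G$. Consequently, no $\hat G$-factor of $\pp_G$ splits in $GH/\hat G$. Since every $\hat G$-prime above $\pp_\Q$ lies above some $G$-prime $\pp_G$ above $\pp_\Q$, and the above holds for \emph{every} such $\pp_G$, no $\hat G$-factor of $\pp_\Q$ splits in $GH/\hat G$, which is the desired conclusion.

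There is no real obstacle: the work has been done in Lemma \ref{le:notsplit}, and the only additional ingredient is the observation that the cyclic structure of $\Gal(GH/G)$ forces the intermediate field of index $q$ to be unique, so that the conclusion for each $\pp_G$ can be bundled into a simultaneous statement for all primes of $\hat G$ above $\pp_\Q$.
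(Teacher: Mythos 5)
Your proof is correct and takes essentially the same route as the paper: the paper's proof is precisely the observation preceding the corollary, that $GH/G$ being cyclic of $q$-power degree forces the intermediate field $\hat G$ of index $q$ in $GH$ to be unique, so Lemma \ref{le:notsplit} applied to each $G$-prime $\pp_G$ above $\pp_\Q$ lands on the same $\hat G$. (You have also, implicitly and correctly, read the hypothesis as $\ord_q f(\pp_G/\pp_\Q) < \ord_q[H:\Q] = r$, which is what Lemma \ref{le:notsplit} requires; as literally printed, $\ord_q f(\pp_G/\pp_\Q) < [H:\Q] = q^r$ is weaker and would not suffice.)
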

We now consider the case when $q=2$ and examine generators of $GH$ over $\hat G$.
\begin{lemma}
\label{makereal}
Let $G, \hat G, H$ be as in  Corollary \ref{cor:notsplit}, let $q=2$, and assume $H$ is totally real.  Suppose $HG =\hat{G}(\sqrt{a})$, $a \in \hat{G}$.  In this case, if $\sigma : \hat G \longrightarrow \tilde \Q \cap \R$ is an embedding of $\hat G$, then $\sigma(a)>0$.
  \end{lemma}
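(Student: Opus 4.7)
The plan is to exploit the fact that $GH$ is the compositum $\hat G H$ and to produce a generator of $GH/\hat G$ that actually lies in $H$. Since $H$ is totally real, such a generator is positive under every real embedding of $\hat G$, and passing from it to the given generator $a$ costs only a square in $\hat G^{*}$.

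First, I would set $A = H \cap \hat G$. Since $H/\Q$ is Galois, the standard computation of degrees in composita (compare Lemma \ref{le:staycyc}) gives $[H:A] = [\hat G H : \hat G] = [GH:\hat G] = 2$. Hence $H = A(\sqrt{b})$ for some $b \in A$ that is not a square in $A$. Moreover $b$ is not a square in $\hat G$: otherwise $H = A(\sqrt{b}) \subseteq \hat G$, which would force $GH = \hat G$, contradicting $[GH:\hat G] = 2$. Consequently $GH = \hat G \cdot H = \hat G(\sqrt{b})$.

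Next, I would show that every real embedding of $A$ sends $b$ to a positive real number. Given $\tau : A \to \tilde \Q$ with $\tau(A) \subseteq \R$, extend $\tau$ to an embedding $\tilde \tau$ of $H$ into $\tilde \Q$. Because $H$ is totally real, $\tilde \tau(H) \subseteq \R$, so $\tilde \tau(\sqrt{b}) \in \R$ and therefore $\tau(b) = \tilde\tau(\sqrt{b})^{2} > 0$ (strict because $b \neq 0$ and $b$ is not a square, so $\tilde\tau(\sqrt{b}) \neq 0$).

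Finally, I would compare the two presentations $\hat G(\sqrt{a}) = GH = \hat G(\sqrt{b})$. Two quadratic extensions of a field coincide iff their radicands differ by a square in the base, so $a = b \, s^{2}$ for some $s \in \hat G^{*}$. For any embedding $\sigma : \hat G \to \tilde \Q \cap \R$, the restriction $\sigma|_{A}$ is a real embedding of $A$, so the previous step gives $\sigma(b) > 0$; combined with $\sigma(s)^{2} > 0$, this yields $\sigma(a) = \sigma(b)\sigma(s)^{2} > 0$, as required. The only real subtlety is ensuring $b \notin (\hat G^{*})^{2}$ — otherwise $H \subseteq \hat G$ and the setup collapses — but this is forced by $[GH:\hat G] = 2$; everything else is bookkeeping with compositum degrees and standard extensions of embeddings to algebraic closures.
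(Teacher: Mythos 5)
Your proof is correct, but it takes a longer route than the paper's. The paper simply extends the given real embedding $\sigma$ of $\hat G$ to an embedding of $GH$, and observes that since $H$ is totally real and $G \subseteq \hat G$ already has real image, the extended embedding carries all of $GH$ into $\R$; hence $\sqrt{\sigma(a)} \in \R$ and $\sigma(a) \geq 0$ (with strict inequality because $a$, being a nonsquare radicand, is nonzero). You instead first locate a radicand inside $H$: setting $A = H \cap \hat G$, using the Galois degree computation to get $[H:A] = 2$, writing $H = A(\sqrt{b})$, and checking $b$ is not a square in $\hat G$ so that $GH = \hat G(\sqrt{b})$. You then get $\sigma(b) > 0$ from total reality of $H$, and transfer to $a$ via $a = b s^{2}$. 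What your approach buys is that the positivity is traced back to an explicit element of $H$ and the passage to $a$ is an algebraic identity in $\hat G^{*}$, which makes the strict inequality entirely transparent; what it costs is the detour through $A$ and the radicand change-of-basis, both of which the paper avoids by extending the embedding one step upward instead of pushing a generator one step downward. Both arguments are sound; yours is a legitimate alternative proof.
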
 
  \begin{proof}
  Since $H$ is totally real, for any embedding $\sigma: HG \longrightarrow \tilde \Q$, we have that 
  \[
  \sigma(HG) \subset \R \Leftrightarrow \sigma(G) \subset \R. 
  \]
   If $\sigma(\hat G) \subset \R$, then $\sigma(G) \subset \R$ and $\sigma(HG) \subset \R$ implying $\sqrt{\sigma(a)} \in \R$ and $\sigma(a) \geq 0$. 
  
  \end{proof}

\section{Local degree in infinite extensions.}
\label{primes}
Before proceeding with definitions of integers and integral closures of rings of $\calS$-integers in infinite algebraic extensions of $\Q$, we would like to discuss and clarify the conditions we will use. We start with adding notation.
\begin{notation}
\label{not:1.1}
\begin{enumerate}
\item Let $K_{\inff}$ be an infinite algebraic extension of a number field $G$.
\item Let $I_G=I(G, K_{\inff})=\{K| K \mbox{ is a number field  such that } G \subseteq K \subset K_{\inff}\}$.
\item For any $M\in I_G$, let
\[
I_M=I_M(G, K_{\inff})=\{K| K \mbox{ is a  number field  such that } G \subseteq M \subseteq K \subset K_{\inff}\}.
\]
\item \label{J} For any $M  \in I_G$, let $J_M(G, K_{\inff})$ be an ordered by inclusion subset of $I_M$ such that the union of all the fields in $J_M$ is $K_{\inff}$.  If $\pp_M$ is a prime of $M$, then prime factors of $\pp_M$ in the fields of $J_M$ generate a tree.  A path in such a tree corresponds to a prime ideal of $O_{K_{\inff}}$--the ring of integers of $K_{\inff}$.  We will refer to $J_M$ as a field path from $M$ to $K_{\inff}$.
\item If $M \in I_G$ and $\calS_M$  is a set of primes of $M$, then let $O_{K_{\inff},\calS_{K_{\inff}}}$ denote the integral closure of $O_{M,\calS_M}$ in $K_{\inff}$.  As mentioned above, $O_{K_{\inff}}$ will denote the ring of algebraic integers of $K_{\inff}$
\item If $M$ is a number field, $\pp_M$ is a prime of $M$, and $K\in I_M$, then let $\calC_K(\pp_M)$, as above,  denote the set of all prime factors of $\pp_K$ in $M$.  Let $\calC_{\inff}(\pp_M)=\bigcup_{K \in I_M}\calC_K(\pp_M)$.
\ee
\end{notation}
We now describe conditions on  the primes necessary for our definitions of integers.  The two diagrams below correspond to the Definition \ref{unbounded} of $q$-unbounded and completely $q$-bounded primes.
\begin{center}
{\bf A diagram for $q$-unbounded primes}\\
\xymatrix{
{G}\ar[r]^{\forall}&M \ar[r]^{\exists}& {K}\ar[r] &{\cdots}\ar[r] &{K_{\inff}}\\
{\pp_G}\ar[u]\ar@{^{(}->}[r]^{\forall}&{\pp_M}\ar[u]\ar@{^{(}->}[r]^{\forall}_{q | ef}&{\pp_K}\ar[u]
}
\end{center}

\begin{center}
{\bf A diagram for completely $q$-bounded primes}\\
\xymatrix{
{G}\ar[r]^{\exists}&M \ar[r]^{\forall}& {K}\ar[r] &{\cdots}\ar[r] &{K_{\inff}}\\
{\pp_G}\ar[u]\ar@{^{(}->}[r]^{\forall}&{\pp_M}\ar[u]\ar@{^{(}->}[r]^{\forall}_{\ord_q(ef)=0}&{\pp_K}\ar[u]
}
\end{center}

\begin{definition}[$q$-unbounded and $q$-bounded  primes]
\label{unbounded}
Let $q$ be a rational prime and let $\pp_G$ be a prime of $G$ satisfying the following condition:   for any $M \in I_G$ there exists $K \in I_M$  such that for any  $\pp_M \in \calC_M(\pp_G)$ and any $\pp_K$ in $\calC_K(\pp_M)$ we have that  
\[
d(\pp_K/\pp_M)=e(\pp_K/\pp_M)f(\pp_K/\pp_M) \equiv 0 \mod q,
\]
 where as usual $e(\pp_K/\pp_M)$ is the ramification degree of $\pp_K$ over $\pp_M$, $f(\pp_K/\pp_M)$ is the relative degree of $\pp_K$ over $\pp_M$, and $d(\pp_K/\pp_M)$ is the local degree of $\pp_K$ over $\pp_M$.  In this case we call $\pp_G$ {\it $q$-unbounded}.  (See the diagram above.)

If there exists $M \in I_G$ such that for any $K \in I_M$,   for any  $\pp_M \in \calC_M(\pp_G)$, and any $\pp_K$ in $\calC_K(\pp_M)$ we have that   $\ord_qd(\pp_K/\pp_M)=0$, we call $\pp_G$ {\it completely $q$-bounded}.  (See a diagram above.)   

If $\pp_G$ is {\bf not} $q$-unbounded, we  call $\pp_G$ {\it $q$-bounded}.
If  every prime in $\calC_{\inff}(\pp_G)$ is $q$-bounded, we call $\pp_G$ {\it hereditarily  $q$-bounded}.   

If every prime of $G$ is hereditarily $q$-bounded  in $K_{\inff}$, and all the factors of $q$ are completely $q$-bounded, then we will call $K_{\inff}$ itself $q$-bounded.

\end{definition}
     
Observe that if a prime is completely $q$-bounded, it is hereditarily $q$-bounded.   As is shown below, we  need all the primes of $G$ to be hereditarily $q$-bounded, and we need $q$ to be completely $q$-bounded for our definition method to work for the ring of integers.  At the same time the unbounded primes can be used to define ``big subrings''.  
 \begin{remark}
 \label{rem:unbounded}
   One can rephrase the definition of a $q$-unbounded prime as follows.  A prime $\pp_G$ of $G$ is unbounded if for every $n \in \Z_{>0}$ there exists a field $M \in I_G$ such that for any $\pp_M \in \calC_M(\pp_G)$ we have that $e(\pp_M/\pp_G)f(\pp_M/\pp_G)=d(\pp_M/\pp_G)\equiv 0 \mod q^n$, where $d(\pp_M/\pp_G)$ as above is the local degree $[M_{\pp_M}:G_{\pp_G}]$.
    \end{remark}    
    We also need the following definition.
    \begin{definition}
      Given a $G$-prime $\pp_G$ and a field path $J_G=\{G -M_1 -M_2 \ldots \}$ from $G$ to $K_{\inff}$,  as described in  Notation \ref{not:1.1}, Part \ref{J}, call a path $\calP=\{\pp_G-\pp_{M_1}-\pp_{M_2} \ldots \}$ through the tree of $\pp_G$-factors   $q$-bounded  if  there exists $i \in \Z_{>0}$ such that for all integer $j \geq i$ we have that  $\ord_q(d(\pp_{M_j}/\pp_G))=\ord_q(d(\pp_{M_i}/\pp_G))=n_i$.  Also call $M_i$ a  {\it $q$-bounding field}  and call $n_i$ a {\it $q$-bounding order}.   
     \end{definition}
     \begin{remark}
     \label{work off path}
     A $q$-bounding field and a $q$-bounding order  also ``work'' off the field path where they were defined.  Indeed, let $M$ and $n$ be a $q$-bounding field and order defined along some field path $J_G$, and let $N \in I_G$.  In this case for some $\pp_N \in \calC_N(\pp_G)$ it is true that $\ord_q(d(\pp_N/\pp_G)) \leq n$.  Indeed, some field $L$ along the field path $J_G$ contains $M$ and $N$ and for some $\pp_L \in \calC_L(\pp_G)$ we have that $\ord_q(d(\pp_L/\pp_G)) = n$.  Thus, for $\pp_N = \pp_L \cap N \in \calC_N(\pp_G)$ it is the case that $\ord_q(d(\pp_N/\pp_G))  \leq \ord_q(d(\pp_L/\pp_G)) = n$.  Similarly, for any $L \in I_M$ we have that for some $\pp_L \in \calC_L(\pp_M)$ it is the case that $\ord_qd(\pp_L/\pp_M)=0$.
          \end{remark}
    \begin{lemma}
    \label{bounded path}
Choose any field path $J_G$ as in Notation \ref{not:1.1}, Part \ref{J} and consider the corresponding tree of  factors for some prime $\pp_G$ of $G$.  We claim that 
   $\pp_G$ is $q$-bounded if and only if it lies along a $q$-bounded path.  
   \end{lemma}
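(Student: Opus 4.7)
The plan is to prove both directions of the equivalence using two basic facts: multiplicativity of local degree in towers, $d(\pp_K/\pp_G) = d(\pp_K/\pp_M)\,d(\pp_M/\pp_G)$, and the finiteness of $\calC_N(\pp_G)$ for every number field $N \in I_G$, which will later enable an appeal to K\"onig's lemma.

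For the $(\Leftarrow)$ direction, suppose $\pp_G$ lies on a $q$-bounded path with $q$-bounding field $M_i \in J_G$ and $q$-bounding order $n_i$. I set $M := M_i$ and verify it witnesses $q$-boundedness. Given any $K \in I_M$, choose $j \geq i$ with $K \subseteq M_j$ (possible since the fields of $J_G$ form a chain whose union is $K_{\inff}$ and $K$ is a number field), put $\pp_M := \pp_{M_i}$ and $\pp_K := \pp_{M_j} \cap K$; coherence of the path ensures $\pp_K \in \calC_K(\pp_M)$. Applying $\ord_q$ to the tower identity
\[
d(\pp_{M_j}/\pp_G) = d(\pp_{M_j}/\pp_K)\, d(\pp_K/\pp_M)\, d(\pp_M/\pp_G)
\]
yields $n_i = (\text{something} \geq 0) + \ord_q d(\pp_K/\pp_M) + n_i$, forcing $\ord_q d(\pp_K/\pp_M) = 0$, i.e.\ $q \nmid d(\pp_K/\pp_M)$, as required.

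For the $(\Rightarrow)$ direction, suppose $\pp_G$ is $q$-bounded, witnessed by some $M^* \in I_G$. My first step is to convert this into a uniform bound: set $n := \max\{\ord_q d(\pp_{M^*}/\pp_G) : \pp_{M^*} \in \calC_{M^*}(\pp_G)\}$, which is finite since $\calC_{M^*}(\pp_G)$ is finite. For any $K \in I_{M^*}$, the defining condition combined with multiplicativity produces some $\pp_K \in \calC_K(\pp_G)$ with $\ord_q d(\pp_K/\pp_G) \leq n$; for $K \in I_G \setminus I_{M^*}$, restrict such a prime from the compositum $KM^*$ down to $K$, noting that restriction cannot increase $\ord_q d$. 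My second step is the K\"onig's lemma argument: after truncating $J_G$ so that $M_1 \supseteq M^*$, form the subtree $T_{\mathrm{good}}$ consisting of all factors $\pp_{M_j}$ satisfying $\ord_q d(\pp_{M_j}/\pp_G) \leq n$. Multiplicativity implies that restrictions of good primes are good, so $T_{\mathrm{good}}$ really is a subtree; each of its levels is nonempty (by the uniform bound) and finite (since $\calC_{M_j}(\pp_G)$ is finite). K\"onig's lemma produces an infinite path through $T_{\mathrm{good}}$, and along it the values $\ord_q d(\pp_{M_j}/\pp_G)$ form a non-decreasing integer sequence bounded above by $n$; hence they stabilize from some level onward, yielding a $q$-bounded path.

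The main difficulty is the $(\Rightarrow)$ direction: the raw definition of $q$-boundedness furnishes a suitable factor at each level separately, with no a priori coherence across levels, so a compactness argument is required to splice these local choices into a single coherent path. K\"onig's lemma is the natural tool, and its applicability relies crucially on the finiteness of each $\calC_{M_j}(\pp_G)$, which in turn rests on $M_j$ being a number field.
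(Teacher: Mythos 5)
Your proof is correct and follows the same overall strategy as the paper: both directions, with K\"onig's lemma doing the work in the hard ($\Rightarrow$) direction via the subtree of ``good'' factors. The one genuine difference is that the paper obtains the uniform bound $n$ by implicitly invoking the (unproven) rephrasing of $q$-unboundedness in Remark~\ref{rem:unbounded}, while you derive it directly from the raw definition of $q$-boundedness together with multiplicativity of local degrees and the compositum trick; you likewise prove the converse by exhibiting an explicit witness $M_i$ for the defining $\exists M\,\forall K\,\exists\,\pp_M,\pp_K$ condition rather than appealing to Remark~\ref{rem:unbounded}. This makes your argument slightly more self-contained, at the cost of a little extra bookkeeping. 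One cosmetic point: the ``truncating $J_G$ so that $M_1 \supseteq M^*$'' step is unnecessary (and strictly speaking a field path $J_G$ begins at $G$), since you have already shown that every level $M_j$ of the original tree contains a factor with $\ord_q d(\pp_{M_j}/\pp_G) \leq n$, which is all K\"onig's lemma needs.
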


    \begin{proof}
     Indeed, suppose $\pp_G$ is $q$-bounded and let $n \in \Z_{>0}$ be such that for any $M \in J_G$ for some $\pp_M \in \calC_M(\pp_G)$ we have that $d(\pp_M/\pp_G)\not \equiv 0 \mod q^n$.  From the tree of $\pp_G$ factors corresponding to $J_G$ remove all the ``nodes'' (i.e. factors of $\pp_G$) with the local degree with respect to $\pp_G$ divisible by $q^n$.  Note that if a  node survives removal, all of its predecessors must survive too.  Thus, the tree structure is preserved under the removal of the nodes with the local degree with respect to $\pp_G$ divisible by $q^n$.   This tree will have arbitrarily long paths and thus by K\"{o}nig's Lemma an infinite path.  Since the order at $q$ of the local degree along this path is bounded, after some point the degree can grow only by factors prime to $q$. 
     
     Conversely,  along a $q$-bounded path the order of the local degree at $q$ will be bounded and therefore we cannot have arbitrarily large powers of $q$ divide the local degree for all the factors of a prime on such a path.
     \end{proof}

  In the case a prime $\pp_G$ is completely $q$-bounded, by definition, there is a $q$-bounding field and a $q$-bounding order which work along all paths through the factor tree.
  \begin{definition}
  Let $\pp_G$ be a completely $q$-bounded prime and let $M \in I_G$  be such that for any $K \in I_M$, for any $\pp_M \in \calC_M(\pp_G)$, and any $\pp_K \in \calC_K(\pp_M)$ we have that $\ord_q{d(\pp_K/\pp_M)}=0$.  In this case call $M$ a {\it completely $q$-bounding field} (for $\pp_G$).  Call $\max_{\pp_M \in \calC_M(\pp_G)}(\ord_q(d(\pp_M/\pp_G)))$ a {\it completely $q$-bounding order} (for $\pp_G$).  
  \end{definition}

\section{Defining the Ring of Integers in  Infinite Extensions of $\Q$.}
\label{inf ext}
Our plan is to deal with all but finitely many primes first.  This is accomplished in the section below.
\subsection{The main part of the definition}
We will use the following notation and assumptions in this section.
\begin{notationassumptions}
\begin{itemize} 

\item Let $K_{\inff}$ be an infinite algebraic extension of $\Q$.
\item Let $G \subset K_{\inff}$ be a number field, and let $\calS_G$ be a finite, possibly empty set of primes of $G$.  Suppose all the primes of $G$ not dividing $q$ and not in $\calS_G$ are {\it heredeterily $q$-bounded}  in $K_{\inff}$. 
\item Let $\calQ_G$ be the set of all factors of $q$ in $G$.
\item Let $\calW_G =  \calS_G \cup \calQ_G$
\item Let $O_{K_{\inff},\calW_{K_{\inff}}}, O_{K_{\inff},\calS_{K_{\inff}}}, O_{K_{\inff},\calQ_{K_{\inff}}},$ denote the integral closure of $O_{G,\calW_G}$,  $O_{G,\calS_G}$ and $O_{G,\calQ_G}$ respectively in $K_{\inff}$.
\end{itemize}
\end{notationassumptions}
\begin{proposition}
\label{prop:norm}
If $\xi_q \in G$, $b, x  \in K_{\inff}$, $x \not=0, bx^q+b^q \not =0$
\[
c \in \Omega_q(K_{\inff})  \cap \Phi_q(K_{\inff}) \cap \Theta_q(K,\calS_{K_{\inff}}),
\]
   and there exists $y \in L_{\inff}$, where $L_{\inff}=K_{\inff}(\sqrt[q]{1+ x^{-1}}, \sqrt[q]{1+ (bx^q+b^q)^{-1}},\sqrt[q]{1+(c+c^{-1})x^{-1}})$ such that
\begin{equation}
\label{eq:norminside}
{\mathbf N}_{L_{\inff}(\sqrt[q]{c})/L_{\inff}}(y) = bx^q+b^q,
\end{equation}
then there exists a field $M\in I_G$ such that  for any field $K \in I_{M}$, for any non-archimedean prime $\pp_K$ of $K$ not in $\calW_K$, it is the case that one of the following conditions holds:
\be
\item \label{nass:1}$c$ is a $q$-th power mod $\pp_K$, or
\item \label{nass:2}$\ord_{\pp_K} x \geq 0$, or
\item \label{nass:3}$q\ord_{\pp_K}x  \geq (q-1)\ord_{\pp_K}b$, or
\item \label{nass:4}$\ord_{\pp_K}b \equiv 0 \mod q$.
\ee
At the same time, if $x \in O_{K_{\inff},\calW_{K_{\inff}}}$,  then \eqref{eq:norminside} has a solution $y \in L_{\inff}$.
\end{proposition}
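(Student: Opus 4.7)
Choose $M\in I_G$ containing $b,c,x$ and large enough that $y\in L_M(\sqrt[q]{c})$ (possible since $y$ is algebraic, once a specific $q$-th root of $c$ in $L_{\inff}(\sqrt[q]{c})$ is fixed). For any $K\in I_M$, the Galois action on $L_K(\sqrt[q]{c})/L_K$ agrees with that on $L_{\inff}(\sqrt[q]{c})/L_{\inff}$ when restricted to $y$, so $\mathbf{N}_{L_K(\sqrt[q]{c})/L_K}(y)=\mathbf{N}_{L_{\inff}(\sqrt[q]{c})/L_{\inff}}(y)=bx^q+b^q$. Assume for contradiction that some prime $\pp_K\notin\calW_K$ of some $K\in I_M$ violates all four conditions. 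Because $\calQ_K\subseteq\calW_K$, the prime $\pp_K$ does not divide $q$, so the hypotheses of Proposition~\ref{prop:badprime} are satisfied. The conclusion yields, at every factor $\pp_L$ of $\pp_K$ in $L_K$, that $c$ is not a $q$-th power modulo $\pp_L$ and $\ord_{\pp_L}(bx^q+b^q)\not\equiv 0\pmod q$. The first makes $L_K(\sqrt[q]{c})/L_K$ cyclic of degree $q$, unramified at $\pp_L$ by Lemma~\ref{le:first}.\ref{firstit:1}, with $\pp_L$ inert by Lemma~\ref{le:first}.\ref{firstit:2}; then Lemma~\ref{le:cycextensions} forces every norm from this extension to have $\pp_L$-order divisible by $q$, contradicting the second.

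\textbf{Reverse direction via the Hasse Norm Principle.} Enlarge $M\in I_G$ to contain $b,c,x$ and, invoking Remark~\ref{rem:omega}, arrange $c\in\Omega_q(M)$. If $c$ is a $q$-th power in $L_M$, then $L_M(\sqrt[q]{c})=L_M$ and $y=bx^q+b^q$ solves the trivial norm equation; otherwise $L_M(\sqrt[q]{c})/L_M$ is cyclic of degree $q$, and the plan is to apply Hasse at this finite level to produce $y\in L_M(\sqrt[q]{c})\subseteq L_{\inff}(\sqrt[q]{c})$. For a non-archimedean prime $\pp$ of $L_M$ above $q$, $c\in\Phi_q(K_{\inff})$ yields $\ord_\pp(c-1)\geq 3\ord_\pp q$, so Lemma~\ref{notq} makes $\pp$ split completely and the local norm condition is trivial. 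For $\pp$ above $\calS_G$ and not above $q$, $c\in\Theta_q(K_{\inff},\calS_{K_{\inff}})$ gives $\ord_\pp(c-1)\geq 1 > 0 = 2\ord_\pp q$, so Hensel's lemma applied to $X^q-c$ at $\alpha=1$ produces a root in the completion; with $\xi_q\in L_M$ all $q$ roots are present, again yielding complete splitting. For $\pp\notin\calW_{L_M}$, the prime $\pp$ is neither a factor of $q$ nor a pole of $x$ (since $x\in O_{K_{\inff},\calW_{K_{\inff}}}$ forces the $L_M$-poles of $x$ into $\calW_{L_M}$), so Proposition~\ref{prop:fixorder} gives $\ord_\pp c\equiv 0\pmod q$ and $\ord_\pp(bx^q+b^q)\equiv 0\pmod q$; Lemma~\ref{le:first}.\ref{firstit:1} then makes the extension unramified at $\pp$, and local class field theory for unramified cyclic degree-$q$ extensions declares every element whose order is divisible by $q$ a local norm.

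\textbf{Archimedean places and conclusion.} Every real embedding of $L_M$ restricts to a real embedding of $M$, so $c\in\Omega_q(M)$ propagates to $c\in\Omega_q(L_M)$. For $q$ odd, $L_M(\sqrt[q]{c})/L_M$ has odd degree and is therefore trivial at every archimedean place; for $q=2$, the inequality $\sigma(c)\geq 0$ at each real embedding $\sigma$ forces $\sqrt{\sigma(c)}\in\R$, again trivializing the extension at real places (complex places being automatic). Local norm conditions are thus trivial at every archimedean place, and the Hasse Norm Principle delivers the desired $y\in L_M(\sqrt[q]{c})\subseteq L_{\inff}(\sqrt[q]{c})$ with $\mathbf{N}(y)=bx^q+b^q$. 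I expect the main obstacle to be the coordinated use of the three conditions on $c$—$\Phi_q$ handling factors of $q$, $\Theta_q$ handling primes above $\calS_G$, and $\Omega_q$ handling archimedean places—together with the tailored construction of $L$ via Propositions~\ref{prop:badprime} and~\ref{prop:fixorder}, which forces the order of $bx^q+b^q$ at every remaining prime of $L_M$ to be divisible by $q$ so that local class field theory can conclude.
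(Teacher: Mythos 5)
Your proof tracks the paper's argument closely and is essentially correct, but there is one omission that both directions rely on: you never explicitly arrange that $[L_{M}(\sqrt[q]{c}):L_{M}]=[L_{\inff}(\sqrt[q]{c}):L_{\inff}]$. This equality (the paper's condition~\eqref{eq:cond2}) is what makes the claim $\mathbf{N}_{L_K(\sqrt[q]{c})/L_K}(y)=\mathbf{N}_{L_{\inff}(\sqrt[q]{c})/L_{\inff}}(y)$ legitimate in the forward direction, and conversely is what guarantees in the reverse direction that the $y$ produced by Hasse over $L_M(\sqrt[q]{c})$ also satisfies the norm equation over $L_{\inff}$. If $c$ is not a $q$-th power in $L_M$ but becomes one in $L_{\inff}$, your norm over $L_M(\sqrt[q]{c})$ is a product of $q$ conjugates while the norm over $L_{\inff}(\sqrt[q]{c})=L_{\inff}$ is the identity, and the two need not agree. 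The fix is cheap — enlarge $M$ so that either $\sqrt[q]{c}\in L_M$ already, or $c$ remains a non-$q$-th-power in $L_{\inff}$ — and the paper records it as an explicit condition on the choice of $M$; your "(possible since $y$ is algebraic\ldots)" aside addresses where $y$ lives but not this degree stability, so you should add it.

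Beyond that, the decomposition mirrors the paper's: contrapositive via Proposition~\ref{prop:badprime} and Lemma~\ref{le:cycextensions} for the forward direction, and Hasse Norm Principle with Proposition~\ref{prop:fixorder}, Lemma~\ref{notq}, and Lemma~\ref{le:first} for the reverse. You replace the paper's invocation of Lemma~\ref{le:first} at the $\calS_G$-primes with a direct Hensel computation — mathematically equivalent — and you compress the paper's explicit unit-trick argument into the standard fact from local class field theory that in an unramified cyclic degree-$q$ local extension an element is a norm iff its order is divisible by $q$, which is fine. Your handling of archimedean places is correct; you might note (as the paper does) that for $q>2$ the presence of $\xi_q$ in $L_M$ already precludes real embeddings, which is the cleanest reason the $q$ odd case is vacuous.
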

\begin{proof}
Suppose that  \eqref{eq:norminside} holds for some $x, b,c,y$ as specified above.  Let $M \in I_G$ be such that 
\begin{equation}
\label{eq:cond1}
x , b, c \in M, 
\end{equation}
\begin{equation}
\label{eq:cond1.1}
y \in L_{M}(\sqrt[q]{c}), \mbox{ where } L_{M}=M(\sqrt[q]{1+ x^{-1}}, \sqrt[q]{1+ (bx^q+b^q)^{-1}},\sqrt[q]{1+(c+c^{-1})x^{-1}})
\end{equation}
 and 
\begin{equation}
\label{eq:cond2}
[L_{\inff}(\sqrt[q]{c}):L_{\inff}]=[L_M(\sqrt[q]{c}):L_M].
\end{equation}
 In this case, for any $K \in I_M$, we also have that  $x ,b, c \in K$, $y \in L_K(\sqrt[q]{c}),$ where 
\[
L_{K}=K(\sqrt[q]{1+ x^{-1}}, \sqrt[q]{1+ (bx^q+b^q)^{-1}},\sqrt[q]{1+(c+c^{-1})x^{-1}})
\]
with
\[
[L_{\inff}(\sqrt[q]{c}):L_{\inff}]=[L_K(\sqrt[q]{c}):L_K],
\]
 and therefore it is also the case 
\begin{equation}
\label{eq:norminsidefinite}
{\mathbf N}_{L_K(\sqrt[q]{c})/L_K}(y) = bx^q+b^q.
\end{equation}

 Now, if for some $K$-prime $\pp_K$ such that $\pp_K \not \in \calW_K$,  we have that none of the Conditions \ref{nass:1} -- \ref{nass:4} is satisfied,  then by Proposition \ref{prop:badprime}, we have that 
\[
\ord_{\pp_{L_K}}(bx^q+b^q)\not \equiv 0 \mod q
\]
 and $c$ is not $q$-th power modulo $\pp_{L_K}$.  Hence by by Lemma \ref{le:cycextensions} we conclude that the norm equation \eqref{eq:norminsidefinite} has no solution in $ L_K(\sqrt[q]{c})$ contradicting our assumptions.   
 
Suppose now that $x \in O_{K_{\inff},\calW_{K_{\inff}}}$, let $M \in I_G$ satisfy assumptions \eqref{eq:cond1},  \eqref{eq:cond2} and be such that $c \in \Omega_q(M)   \cap \Phi_q(M) \cap \Theta_q(M,\calS_{M})$.  (We can find $M$ satisfying $c \in \Omega_q(M)$ by Remark \ref{rem:omega}.)  We now choose any $K \in I_M$ and show that  \eqref{eq:norminsidefinite} has a solution  $y \in L_K(\sqrt[q]{c})$.  Since \eqref{eq:cond2} insures that for any $y \in L_K(\sqrt[q]{c})$, it is the case that 
\begin{equation}
\label{eq:samenorm}
{\mathbf N}_{L_K(\sqrt[q]{c})/L_K}(y) ={\mathbf N}_{L_{\inff}(\sqrt[q]{c})/L_{\inff}}(y),
\end{equation}
we need to solve ${\mathbf N}_{L_K(\sqrt[q]{c})/L_K}(y) =bx^q+b^q$ only.

Since $x \in O_{K_{\inff}, \calW_{K_{\inff}}}$, we have that $x \in O_{K,\calW_K}$.  Further, we also have that 
\[
c \in \Omega_q(K)  \cap \Phi_q(K) \cap \Theta_q(K,\calS_{K}),
\]
by definition of these sets and Remark \ref{rem:omega}.
In this case by Proposition \ref{prop:fixorder}, for  every prime $\aaa_{L_K}$, not dividing $q$ or any prime in $\calS_K$, we have the following:
\begin{itemize}
\item  $\ord_{\aaa_{L_K}}(bx^q+b^q) \equiv 0 \mod q$,
and
\item  $\ord_{\aaa_{L_K}}c \equiv 0 \mod q$.
\end{itemize}
Further, by Lemma \ref{notq} and by our assumption that $c \in \Phi_q(K)$, we know that factors of $q$ are not ramified in the extension $L_K(\sqrt[q]{c})/L_K$, and since the divisor of $c$ is a $q$-th power in $L_K$, the extension $L_K(\sqrt[q]{c})/L_K$ is unramified at all finite primes by Lemma \ref{le:first}.

 By  Hasse's Norm Principle (see Theorem 32.9 of \cite{Reiner}) this norm equation has solutions globally (i.e. in $L_K(\sqrt[q]{c})$) if and only if it has a solution locally (i.e. in every completion).   

 Observe further that locally every unit is a norm in an unramified extension (see Proposition 6, Section 2, Chapter XII of \cite{W}), and we do not have to worry about archimedean primes, given our assumption on $c$.   Indeed, if $q >2$, then $K\not \subset \R$ and therefore all the archimedean completions of all the fields involved are isomorphic to $\C$.  If $q=2$, then we have to worry about one possibility only: an archimedean completion of $L_K$ is isomorphic to $\R$, while a corresponding archimedean completion of $L_K(\sqrt{c})$ is isomorphic to $\C$.  However, this case is precluded by Lemma \ref{le:real} and our assumption that $c \in \Omega_q(K)$.

   Next we observe that since $L_K(\sqrt[q]{c})/L_K$ is a cyclic extension of prime degree, by Lemma \ref{le:cycextensions} every unramified prime either splits completely or does not split at all.  If a prime splits completely, then the local degree is one and every element of the field below is automatically a norm locally at this prime.  So the only primes where we might have elements which are not local norms are the primes which do not split, or, in other words, the primes where the local degree is $q$.   (Note that any factor of $q$ and any factor of a prime in $\calS_K$ split completely in the extension $L_K(\sqrt[q]{c})/L_K$ by our assumptions on $c$ and Lemmas \ref{le:first} and \ref{notq}.) 

So let $\rr_{L_K}$ be  a prime of local degree $q$ not in $\calW_{L_K}$.  By the argument above we have that $\ord_{\rr_{L_K}}(bx^q +b^q) \equiv 0 \mod q$.  In this case, by the Weak Approximation Theorem, there exists $u \in L_K$ such that $\ord_{\rr_{L_K}}u =1$  and therefore for some integer $m$ it is the case that  $u^{qm}(bx^q+b^q)$ has order 0 at $\rr_{L_K}$ or in other words $u^{qm}(bx^q+b^q)$ is a unit at $\rr_{L_K}$.   

As any $q$-th power of an $L_K$-element, $u^{mq}$ is a norm locally since the degree of the local extension is $q$ by our assumption.  Therefore, $u^{mq}(bx^q+b^q)$ is a norm at $\rr_{L_K}$ if and only if $(bx^q+b^q)$ is a norm at $\rr_{L_K}$.  But $u^{mq}(bx^q+b^q)$ is a unit at $\rr_{L_K}$ and therefore is a norm.  Hence $bx^q+b^q$ is a norm.
\end{proof}
\begin{corollary}
\label{def1}
If $\xi_q \in G$ then
\begin{equation}
\label{eq:A}
\begin{array}{c}
O_{K_{\inff},\calW_{K_{\inff}}}=\{0\} \cup\\
 \{x \in K_{\inff} \setminus \{0\}|\forall c \in \Theta_q(K_{\inff},\calS_{K_{\inff}}) \cap \Phi_q(K_{\inff})\cap \Omega_q(K_{\inff}) \forall b \in K_{\inff} \\ ((bx^q+b^q=0) \lor \exists y \in L_{\inff}(\sqrt[q]{c}): {\mathbf N}_{L_{\inff}(\sqrt[q]{c})/L_{\inff}}(y) = bx^q+b^q)\},
\end{array}
\end{equation}
In particular, if  $\calS_{K_{\inff}}$ is empty, then 
\begin{equation}
\label{eq:B}
\begin{array}{c}
O_{K_{\inff},\calQ_{K_{\inff}}}=\{0\} \cup\\
\{x \in K_{\inff}\setminus \{0\}|\forall c \in  \Phi_q(K_{\inff})\cap \Omega_q(K_{\inff}) \forall b \in K_{\inff}( (bx^q+b^q=0) \lor \exists y \in L_{\inff}(\sqrt[q]{c}): \\
{\mathbf N}_{L_{\inff}(\sqrt[q]{c})/L_{\inff}}(y) = bx^q+b^q)\},
\end{array}
\end{equation}
and if additionally $q >2$ or $K_{\inff}$ has no real embeddings, then 
\begin{equation}
\label{eq:C}
\begin{array}{c}
O_{K_{\inff},\calQ_{K_{\inff}}}=\{0\} \cup \\
\{x \in K_{\inff}\setminus \{0\}|\forall c \in  \Phi_q(K_{\inff}) \forall b \in K_{\inff} ((bx^q+b^q=0) \lor  \exists y \in L_{\inff}(\sqrt[q]{c}): {\mathbf N}_{L_{\inff}(\sqrt[q]{c})/L_{\inff}}(y) = bx^q+b^q)\}.
\end{array}
\end{equation}
\end{corollary}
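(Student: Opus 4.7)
The plan is to prove \eqref{eq:A} in full; formulas \eqref{eq:B} and \eqref{eq:C} will follow as immediate specializations, since $\calS_{K_{\inff}}=\emptyset$ gives $\Theta_q(K_{\inff},\emptyset)=K_{\inff}$ and $\calW_{K_{\inff}}=\calQ_{K_{\inff}}$, and the additional hypothesis ``$q>2$ or no real embeddings'' gives $\Omega_q(K_{\inff})=K_{\inff}$. The inclusion $\supseteq$ in \eqref{eq:A} is immediate from the second assertion of Proposition~\ref{prop:norm}: for $x\in O_{K_{\inff},\calW_{K_{\inff}}}\setminus\{0\}$ and any admissible $b,c$, either $bx^q+b^q=0$ (the clause is satisfied trivially) or that proposition itself produces the desired $y$.

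For the inclusion $\subseteq$, fix $x\in K_{\inff}\setminus (O_{K_{\inff},\calW_{K_{\inff}}}\cup\{0\})$; the task is to exhibit $b,c$ for which no $y\in L_{\inff}$ solves the norm equation, thereby excluding $x$ from the right-hand side. By assumption there exist $K_0\in I_G$ with $x\in K_0$ and a prime $\pp_{K_0}\notin\calW_{K_0}$ with $\ord_{\pp_{K_0}}x<0$. Using the hereditary $q$-boundedness hypothesis together with Lemma~\ref{bounded path}, $\pp_{K_0}$ lies along a $q$-bounded path; replace $K_0$ by the corresponding $q$-bounding field $L_0\supseteq K_0$ and $\pp_{K_0}$ by the prime $\pp_{L_0}$ of $L_0$ on this path. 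Now $L_0$ is itself a bounding field of bounding order $0$, so Remark~\ref{work off path} provides that for \emph{every} $N\in I_{L_0}$ some prime $\pp_N\in\calC_N(\pp_{L_0})$ satisfies $\ord_q d(\pp_N/\pp_{L_0})=0$, i.e.\ both $e(\pp_N/\pp_{L_0})$ and $f(\pp_N/\pp_{L_0})$ are prime to $q$.

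Next, by weak approximation in $L_0$, choose $b\in L_0$ with $\ord_{\pp_{L_0}}b=-1$ and $c\in L_0$ such that $c\bmod\pp_{L_0}$ is a non-$q$-th power in the residue field of $\pp_{L_0}$; $c\equiv 1\bmod q^3$; $c\equiv 1\bmod\pp$ for every $\pp\in\calS_{L_0}$; and (only when $q=2$ and $L_0$ has real embeddings) $\sigma(c)>0$ at every real embedding $\sigma$ of $L_0$. A non-$q$-th power class exists because $\xi_q\in G\subseteq L_0$ and $\pp_{L_0}\nmid q$ force $q$ to divide the order of the residue field's unit group, and the conditions sit at finitely many pairwise distinct places of $L_0$, so weak approximation supplies $c$. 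By the infinite-extension definitions of $\Phi_q,\Theta_q,\Omega_q$, this $c$ lies in the intersection $\Theta_q(K_{\inff},\calS_{K_{\inff}})\cap\Phi_q(K_{\inff})\cap\Omega_q(K_{\inff})$, and a quick order computation at $\pp_{L_0}$ gives $bx^q+b^q\ne 0$.

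Finally, assume for contradiction that some $y\in L_{\inff}$ satisfies \eqref{eq:norminside}. Proposition~\ref{prop:norm} then produces $M\in I_{L_0}$ for which every prime $\pp_K\notin\calW_K$ of every $K\in I_M$ satisfies one of conditions~\ref{nass:1}--\ref{nass:4}. Choose $K\in I_M$ and a prime $\pp_K$ above $\pp_{L_0}$ with $\ord_q d(\pp_K/\pp_{L_0})=0$, which exists by the property established above and lies outside $\calW_K$ since $\pp_{L_0}\notin\calW_{L_0}$. Direct computation shows all four conditions fail at $\pp_K$: the pole of $x$ at $\pp_{L_0}$ rules out condition~\ref{nass:2}; the equality $\ord_{\pp_K}b=-e(\pp_K/\pp_{L_0})$ together with $q\nmid e(\pp_K/\pp_{L_0})$ rules out condition~\ref{nass:4}, and the estimate $q\ord_{\pp_K}x\le -q\,e(\pp_K/\pp_{L_0})<-(q-1)\,e(\pp_K/\pp_{L_0})=(q-1)\ord_{\pp_K}b$ rules out condition~\ref{nass:3}; and condition~\ref{nass:1} fails because a non-$q$-th power in a finite field containing $\xi_q$ remains a non-$q$-th power in any residue extension of degree prime to $q$, which applies since $q\nmid f(\pp_K/\pp_{L_0})$. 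This contradicts Proposition~\ref{prop:norm}. The main obstacle is coordinating the choice of $c$ with the ``for-every-$M$'' quantifier produced by Proposition~\ref{prop:norm}: the hereditary form of $q$-boundedness, combined with the trick of replacing $K_0$ by its bounding field at the outset, is exactly what secures a prime $\pp_K$ with $\ord_q d(\pp_K/\pp_{L_0})=0$ inside every $M$, not merely along one preferred field path.
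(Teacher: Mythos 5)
Your proof is correct and follows essentially the same route as the paper's: the inclusion $\supseteq$ falls out of the last assertion of Proposition~\ref{prop:norm}, and the inclusion $\subseteq$ is proved contrapositively by locating a pole $\pp_{G(x)}$ of $x$ outside $\calW_{G(x)}$, passing to a $q$-bounding field $M$ for the associated $q$-bounded path, choosing $b$ and $c$ at $\pp_M$ by approximation, and then using Remark~\ref{work off path} to find, in every $K\in I_M$, a factor $\pp_K$ at which all four conditions of Proposition~\ref{prop:norm} fail. Your only cosmetic deviations are saying ``weak approximation'' where the paper invokes the Strong Approximation Theorem (both deliver the needed $c$), and the slight shortcut of asserting that Proposition~\ref{prop:norm} produces $M\in I_{L_0}$ rather than merely $M\in I_G$ (justified, since the $M$ in that proposition may always be enlarged); neither is a substantive difference.
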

\begin{proof}
 First we  assume $x \not \in O_{K_{\inff},\calW_{K_{\inff}}}$ and find $b,c$ as specified above for which \eqref{eq:norminside} has no solutions $y \in  L_{\inff}(\sqrt[q]{c})$.

 If  $x \not \in O_{K_{\inff},\calW_{K_{\inff}}}$, then for some prime $\pp_{G(x)} \not \in \calW_{G(x)}$ we have that 
\[
\ord_{\pp_{G(x)}}x<0,
\]
\[
 \pp_G=\pp_{G(x)}\cap G \not \in \calW_G
\]
 and $\pp_G$ is heredeterily $q$-bounded in  $K_{\inff}$.  Thus, $\pp_{G(x)}$ is $q$-bounded in $K_{\inff}$.  Let $M \in I_{G(x)}$ be a $q$-bounding field for $\pp_{G(x)}$ and note that by the Strong Approximation Theorem there exists $c \in \Theta_q(M,\calS_{M}) \cap \Phi_q(M)\cap \Omega_q(M) \subset \Theta_q(K_{\inff},\calS_{K_{\inff}}) \cap \Phi_q(K_{\inff})\cap \Omega_q(K_{\inff})  $ such that $c$ is not a $q$-th power modulo $\pp_M$, where $\pp_M \in \calC_M(\pp_{G(x)})$ lies along the $q$-bounded path for which $M$ is a $q$-bounding field.  Further, let $b \in M$ such that $\ord_{\pp_{M}}b=-1$ and thus $q\ord_{\pp_M}x  < (q-1)\ord_{\pp_M}b$.  Observe further that for any $K \in I_M$ we also have that
 \be
 \item $c \in \Omega_q(K)  \cap \Phi_q(K) \cap \Theta_q(K,\calS_{K})$, by definition of sets $\Omega_q(K), \Phi_q(K)$, and $\Theta_q(K,\calS_K)$,
 \item \label{it:same} for at least one $\pp_K \in \calC_K(\pp_M)$ we have that $d(\pp_K/\pp_M)$ and therefore $f(\pp_K/\pp_M)$ are not divisible by $q$ by definition of a $q$-bounding field, and therefore $c$ is not a $q$-th power modulo at least one $\pp_K \in \calC_K(\pp_M)$, 
 \item for the same $\pp_K$ as in \eqref{it:same} we also have that $e(\pp_K/\pp_M)$ is not divisible by $q$, and therefore $\ord_{\pp_K}b \not \equiv 0 \mod q$ while $q\ord_{\pp_K}x  < (q-1)\ord_{\pp_K}b$.
 \ee
 Thus none of Conditions \ref{nass:1} --\ref{nass:4} of Proposition \ref{prop:norm} is satisfied, and  hence \eqref{eq:norminside} has no solution $y \in L_{\inff}$.

\end{proof}

\subsection{Integrality at finitely many primes using complete $p$-boundedness for $p\not =q$.}
We now consider definitions of integrality at finitely many primes to define $\Theta_q(K_{\inff}, \calS_{K_{\inff}})$, $\Phi_q(K_{\inff})$  and their complements. One way to do this is to use a bit of  ``circular reasoning'' by introducing another rational prime $p$ into the picture and making additional assumptions about our field.  (Here ``circular reasoning'' refers to the fact that we use $q$ to define integrality at factors of $p$, and we use $p$ to define integrality at factors of $q$.)
\begin{notationassumptions}
\label{not:p}
\begin{itemize}
\item Let $p \not =q$ (with $q$ as above) be a rational prime.
\item Assume $\xi_p \in G$.  
\item Assume factors of $q$ and primes in $\calS_G$ are {\it completely $p$-bounded} in $K_{\inff}$ and are all prime to $p$. 
\item Let $\calW_G =\calS_G \cup\{ \mbox{factors of } q \mbox{ in }G\}$, as above.
 \item Let $M_p \in  I_G$ be a completely $p$-bounding field for {\it all} primes in $\calW_G$.  (Even though  completely bounding fields were defined  for a single prime, clearly any finite collection of completely bounded primes has a common completely bounding field, a field that contains a completely bounding field for each prime in the set.)
\end{itemize}

\end{notationassumptions}

\begin{proposition}
\label{finitely many for p not q}

Let  $d \in M_p$ be such that the denominator of its divisor  is divisible by every prime of $\calW_{M_p}$ and $d$ has  no other poles.  Assume further that for any $\pp_{M_p} \in \calW_{M_p}$ it is the case that $\ord_{\pp_{M_p}}d \not \equiv 0 \mod p$.
  (Note that such an element $d \in M_p$ exists by the Strong Approximation  Theorem.)
 Let  $a \in \Phi_p(M_p) \cap \Omega_p(M_p)$, and let $a$ be equivalent to a non-$p$-th power element of the residue field modulo any prime  of $\calW_{M_p}$.  (Existence of $a$ is also guaranteed by the Strong Approximation Theorem.)  Now let 
\[
N_{\inff}=K_{\inff}(\sqrt[p]{1+ d^{-1}},\sqrt[p]{1+ (dx^p+d^p)^{-1}},\sqrt[p]{1+(a+a^{-1})d^{-1}})
\]
 and let
\[
B(K_{\inff},p,a,d)=\{x \in K_{\inff}|\exists y \in N_{\inff}(\sqrt[p]{a}): {\mathbf N}_{N_{\inff}(\sqrt[p]{a})/N_{\inff}}(y) = dx^p+d^p\}.
\]
We claim $B(K_{\inff},p,a,d)=\{x \in K_{\inff}|\forall K \in I_{M_p(x)} \forall \pp_{K} \in \calW_{K}: \ord_{\pp_{K}}x > \frac{p-1}{p}\ord_{\pp_{K}}d \}$.
\end{proposition}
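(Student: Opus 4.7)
The plan is to mirror Proposition \ref{prop:norm} and Corollary \ref{def1} with $p$ playing the role of $q$, the set $\calW$ replacing the ``poles of $b$'' the earlier argument could not control, and the tower $N_{\inff}$ playing the role of the $L$-tower from Propositions \ref{prop:badprimeq} and \ref{prop:fixorderq}. The inequality $\ord_{\pp_K} x > \frac{p-1}{p}\ord_{\pp_K} d$ is engineered precisely so that $\ord_{\pp_K}(dx^p+d^p)=p\ord_{\pp_K} d \equiv 0 \mod p$, which is what Hasse's Norm Principle requires at unramified primes of local degree $p$. I would prove the two containments separately, after reducing to a number field $K \in I_{M_p(x)}$ large enough that $[N_K(\sqrt[p]{a}):N_K]=[N_{\inff}(\sqrt[p]{a}):N_{\inff}]$, exactly as in the proof of Proposition \ref{prop:norm}.

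For $B(K_{\inff},p,a,d) \subseteq \{x : \text{order condition}\}$ I would argue by contraposition. Assume the order condition fails at some $K_0 \in I_{M_p(x)}$ and $\pp_{K_0} \in \calW_{K_0}$. Complete $p$-boundedness of $\pp_{M_p}$ (which makes both $e$ and $f$ for every $\pp_K$ above $\pp_{M_p}$ prime to $p$) together with $\ord_{\pp_{M_p}} d \not\equiv 0 \mod p$ forces $\ord_{\pp_{K_0}} d \not\equiv 0 \mod p$, ruling out equality in $p\ord_{\pp_{K_0}} x \leq (p-1)\ord_{\pp_{K_0}} d$ and upgrading the failure to strict inequality; then $\ord_{\pp_{K_0}}(dx^p+d^p) = \ord_{\pp_{K_0}} d + p\ord_{\pp_{K_0}} x \not\equiv 0 \mod p$. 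Enlarging $K_0$ to a $K \in I_{M_p(x)}$ containing a hypothetical solution $y$ and picking $\pp_K \in \calC_K(\pp_{K_0})$ via complete $p$-boundedness preserves these properties and keeps $a$ a non-$p$-th-power residue modulo $\pp_K$ (the latter because $f(\pp_K/\pp_{M_p})$ prime to $p$ means the $p$-th-power map in the residue extension does not absorb $a$). The $p$-analog of Proposition \ref{prop:badprimeq}---whose hypothesis $\ord_{\pp} d \leq -3\ord_{\pp} p$ is vacuous since $\pp_K \nmid p$---then yields $\ord_{\pp_{N_K}}(dx^p+d^p) \not\equiv 0 \mod p$ and $a$ not a $p$-th power modulo any factor $\pp_{N_K}$ of $\pp_K$, so $\pp_{N_K}$ does not split in $N_K(\sqrt[p]{a})/N_K$, and Lemma \ref{le:cycextensions} contradicts the existence of $y$.

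For the reverse containment, I would fix $K \in I_{M_p(x)}$ large enough (with $a \in \Omega_p(K)$ by Remark \ref{rem:omega}) and solve the norm equation in $N_K(\sqrt[p]{a})$ by Hasse's Norm Principle. Unramifiedness of $N_K(\sqrt[p]{a})/N_K$ splits into three cases: factors of $p$ split completely because $a \in \Phi_p(K)$ (Lemma \ref{notq}); each $\pp_{M_p} \in \calW_{M_p}$ is totally ramified in $M_p(\sqrt[p]{1+d^{-1}})/M_p$ by Lemma \ref{le:first}, Part \ref{firstit:4}, and the two subsequent radical adjunctions split the resulting prime completely by Hensel's lemma (the argument behind Corollary \ref{cor:q-thpower} is even easier because $\ord_{\pp_{M_p}} p = 0$), leaving a unique $\pp_{N_K}$ at which $a$ is a unit with non-$p$-th-power residue, hence unramified in $N_K(\sqrt[p]{a})/N_K$ by Lemma \ref{le:first}, Part \ref{firstit:1}; at every other prime the $p$-analog of Proposition \ref{prop:fixorderq} forces $\ord_{\aaa_{N_K}} a \equiv 0 \mod p$, again preventing ramification. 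Archimedean obstructions vanish by $a \in \Omega_p(K)$ and Lemma \ref{le:real}. Local norm solvability then reduces to $\ord(dx^p+d^p) \equiv 0 \mod p$ at non-split primes, which at $\pp_{N_K} \in \calW_{N_K}$ holds with value $p\ord_{\pp_{N_K}} d$ by the order condition and elsewhere by the fixorder analog; the closing step---multiply by a $p$-th power of a local uniformizer to make $dx^p+d^p$ a local unit, which is automatically a norm in an unramified extension---is copied verbatim from the end of Proposition \ref{prop:norm}.

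The hard part will be verifying that the three radical adjunctions defining $N_{\inff}$ cooperate correctly at primes in $\calW_{M_p}$: the tower must totally ramify such a prime (so that $a$'s non-$p$-th-power-residue status is preserved along the unique lift and the cyclic extension $N_K(\sqrt[p]{a})/N_K$ is non-split there), while simultaneously splitting factors of $p$ completely and pushing the orders of $d$, $dx^p+d^p$, and $a$ into multiples of $p$ at every remaining prime. The $p$-translations of Propositions \ref{prop:badprimeq} and \ref{prop:fixorderq} accomplish each of these, and the accounting is uniform across $K \in I_{M_p(x)}$ precisely because $M_p$ was chosen as a completely $p$-bounding field for all of $\calW_G$.
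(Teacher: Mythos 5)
Your proof follows the paper's approach (the paper's own proof is terse, deferring almost entirely to Proposition \ref{prop:norm} and noting only two adjustments), but there is a genuine error in your reverse-containment direction that recurs in your closing paragraph. You assert that each $\pp_{M_p} \in \calW_{M_p}$ is totally ramified in $M_p(\sqrt[p]{1+d^{-1}})/M_p$ by Lemma \ref{le:first}, Part \ref{firstit:4}. That is wrong: since $\ord_{\pp_{M_p}}d < 0$, we have $\ord_{\pp_{M_p}}(1+d^{-1}) = 0$ with $1 + d^{-1} \equiv 1 \pmod{\pp_{M_p}}$, a $p$-th power, so Part \ref{firstit:3} applies and $\pp_{M_p}$ splits completely. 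You appear to be confusing poles with zeros of $d$: zeros of $d$ of order prime to $p$ ramify totally via Part \ref{firstit:4}, but poles of $d$ (the primes of $\calW$) split.

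This is worse than cosmetic, because if the tower actually did ramify $\pp_{M_p}$ with degree $p$, your own forward-direction argument would collapse: $\ord_{\pp_{N_K}}(dx^p+d^p)$ would be multiplied by $p$, become $\equiv 0 \pmod p$, and hence be a local norm after all. The forward direction survives precisely because complete splitting keeps both $e$ and $f$ equal to $1$, preserving the congruence $\ord(dx^p+d^p) \not\equiv 0 \pmod p$ as well as the non-$p$-th-power status of $a$ (which depends on $f$ being prime to $p$, not on there being a unique lift). So the closing paragraph's ``the tower must totally ramify such a prime'' has the mechanism exactly backwards; replace it with ``the tower splits such a prime completely, so both $e$ and $f$ stay prime to $p$,'' and your blueprint matches the paper's intent, with the two substantive adjustments the paper records being (i) that the divisor of $a$ is disjoint from the poles of $d$ (since $a$ is a non-$p$-th-power unit at every prime of $\calW$), so the third adjunction forces $\ord a \equiv 0 \pmod p$ away from poles of $d$, and (ii) the equivalence between the strict order inequality and $\ord_{\pp_K}(dx^p+d^p) \equiv 0 \pmod p$, which you do handle correctly.
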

\begin{proof}
The proof of the proposition is almost identical to the proof of Proposition \ref{prop:norm}.  One should only point out the following two adjustments.
\be
\item By construction no pole of $d$ in any $K \in I_{M_p}$ occurs in the divisor of $a$, since $a$ is not a $p$-th power  modulo primes of $\calW_K$.  Thus, $(a+a^{-1})d^{-1}$ has poles at all the primes occurring in the divisor of $a$.  Also, all zeros of $d$ of orders not divisible by $p$ in $K$ are ramified with ramification degree $p$ before  we adjoin $\sqrt[p]{1+(a+a^{-1})d^{-1}}$, and therefore in $N_K=K(\sqrt[p]{1+ d^{-1}},\sqrt[p]{1+ (dx^p+d^p)^{-1}},\sqrt[p]{1+(a+a^{-1})d^{-1}})$ all zeros and poles of $a$ have order divisible by $p$.  
 
\item   For any prime $\pp_K \in \calW_K$  we have that  
\[
\ord_{\pp_K}(dx^p) \not = \ord_{\pp_K}(d^p),
\]
 since the left order is not equivalent to 0 mod $p$ and the right one is.  Thus under these circumstances, $\ord_{\pp_K}(dx^p+d^p) \equiv 0 \mod p$, implies that $\ord_{\pp_K}(dx^p+d^p)=\ord_{\pp_K}(d^p)$ and 
 \begin{equation}
 \label{eq:omodp}
 \ord_{\pp_K}x > \frac{p-1}{p} \ord_{\pp_K}d > \ord_{\pp_K}d. 
 \end{equation}
 Conversely, if  for some $K \in I_{M_p(x)}$ we have that  \eqref{eq:omodp} holds for all $K$-primes above primes of $\calW_G$, then $\ord_{\pp_K}(dx^p+d^p) \equiv 0 \mod p$ and $x \in B(K_{\inff},p,a,d)$.
  \ee
\end{proof}

We now use this definition of $B(K_{\inff},p,a,d)$ to obtain a definition of $R_{K_{\inff},\calW_{\inff}}$ -- the ring of elements of $K_{\inff}$ integral with respect to primes of $\calW_G$.  To do this we note the following.
\begin{lemma}
\label{integral}
$R_{K_{\inff},\calW_{\inff}}=\{ x \in  B(K_{\inff},p,a,d)| \forall y \in B(K_{\inff},p,a,d): xy \in B(K_{\inff},p,a,d)\}.$
\end{lemma}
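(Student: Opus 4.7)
The plan is to verify the two inclusions of the claimed identity separately, both directly from the characterization of $B(K_{\inff},p,a,d)$ obtained in Proposition \ref{finitely many for p not q}.

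For the forward inclusion, let $x \in R_{K_{\inff},\calW_{\inff}}$, so that $\ord_{\pp} x \geq 0$ at every prime of $K_{\inff}$ lying above a prime of $\calW_G$.  Since every prime of $\calW_{M_p}$ is a pole of $d$ and this persists under extension, the quantity $\tfrac{p-1}{p}\ord_{\pp_K}d$ is always strictly negative at primes $\pp_K \in \calW_K$, so $x \in B(K_{\inff},p,a,d)$ is immediate.  To check $xy \in B(K_{\inff},p,a,d)$ for an arbitrary $y \in B(K_{\inff},p,a,d)$, I would fix a pair $(K,\pp_K)$ with $K \in I_{M_p(xy)}$ and $\pp_K \in \calW_K$, pass to the finite extension $L = K(x,y) \subset K_{\inff}$ so that $L \in I_{M_p(x)} \cap I_{M_p(y)}$, and choose any prime $\pp_L$ of $L$ above $\pp_K$ (necessarily in $\calW_L$).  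Then
\[
\ord_{\pp_L}(xy) = \ord_{\pp_L}x + \ord_{\pp_L}y \;\geq\; \ord_{\pp_L}y \;>\; \tfrac{p-1}{p}\ord_{\pp_L}d,
\]
and since $xy,d \in K$, dividing through by $e(\pp_L/\pp_K)$ yields $\ord_{\pp_K}(xy) > \tfrac{p-1}{p}\ord_{\pp_K}d$, as required.

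For the reverse inclusion I would argue by contrapositive: if $x \notin R_{K_{\inff},\calW_{\inff}}$, I produce some $y \in B(K_{\inff},p,a,d)$ with $xy \notin B(K_{\inff},p,a,d)$.  By hypothesis there exist $L \in I_{M_p(x)}$ and $\pp_L \in \calW_L$ with $\ord_{\pp_L}x = -m < 0$.  Set $\alpha = \tfrac{p-1}{p}\ord_{\pp_L}d$, a negative real; the interval $(\alpha,\alpha+m]$ has length $m \geq 1$ and therefore contains some integer $n$.  By the Strong Approximation Theorem applied to the finite set $\calW_L$ of primes of $L$, choose $y \in L$ with $\ord_{\pp_L}y = n$ and $\ord_{\rr_L}y \geq 0$ for every other $\rr_L \in \calW_L$.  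A direct computation then gives
\[
\ord_{\pp_L}(xy) = -m + n \;\leq\; \alpha = \tfrac{p-1}{p}\ord_{\pp_L}d,
\]
so the strict inequality defining $B(K_{\inff},p,a,d)$ fails at $(L,\pp_L)$, and since $L \in I_{M_p(xy)}$ with $\pp_L \in \calW_L$ this yields $xy \notin B(K_{\inff},p,a,d)$.

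The main subtlety, which I expect to be the only real obstacle, is confirming that the constructed $y$ actually lies in $B(K_{\inff},p,a,d)$, because the defining condition is universal over \emph{every} $K' \in I_{M_p(y)}$ and not just over $L$.  The key observation is that both $\ord_{\pp}y$ and $\ord_{\pp}d$ scale by the same ramification index when one passes between a prime of a smaller field and a prime of a larger field lying above it; hence the strict inequality at $(K',\pp_{K'})$ is equivalent to that at $(K'',\pp_{K''})$ for any tower $K' \subseteq K''$ with $\pp_{K''}$ above $\pp_{K'}$.  Applied first to the compositum $K'L$ and then descending to $L$, this equivalence reduces the condition at an arbitrary $(K',\pp_{K'})$ to a strict inequality at a prime of $\calW_L$, which was arranged by construction at $\pp_L$ itself and is automatic at the remaining $\rr_L \in \calW_L$ (where $\ord_{\rr_L}y \geq 0 > \tfrac{p-1}{p}\ord_{\rr_L}d$).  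This confirms $y \in B(K_{\inff},p,a,d)$ and completes the argument.
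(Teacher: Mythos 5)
Your proof is correct, and its overall shape (verify both inclusions; for the reverse, exhibit a witness $y \in B(K_{\inff},p,a,d)$ with $xy \notin B(K_{\inff},p,a,d)$) matches the paper's. The forward inclusion is essentially the paper's argument, made more careful by spelling out the ramification-index scaling between $(K,\pp_K)$ and $(L,\pp_L)$ that the paper leaves implicit. The reverse inclusion, however, takes a genuinely different route in constructing the witness: you build $y$ from scratch by Strong Approximation, choosing $\ord_{\pp_L}y$ to be an integer $n$ in the window $\bigl(\tfrac{p-1}{p}\ord_{\pp_L}d,\ \tfrac{p-1}{p}\ord_{\pp_L}d + m\bigr]$, and then must separately verify that this $y$ actually lies in $B(K_{\inff},p,a,d)$ (your closing paragraph). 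The paper instead exploits the multiplicative structure already at hand: since $\ord_{\pp_K}x^r = r\,\ord_{\pp_K}x \to -\infty$, the powers of $x$ must eventually leave $B(K_{\inff},p,a,d)$; taking $r$ maximal with $x^r \in B(K_{\inff},p,a,d)$ and setting $y = x^r$ gives $y \in B(K_{\inff},p,a,d)$ by choice and $xy = x^{r+1} \notin B(K_{\inff},p,a,d)$ automatically. The paper's witness thus avoids Strong Approximation and the side verification that $y \in B$; your version trades a little extra work for making the order-scaling mechanism explicit, which is a useful clarification but not logically necessary here.
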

\begin{proof}
First assume that $x \in R_{K_{\inff},\calW_{\inff}} \subset B(K_{\inff},p,a,d)$ and note that in this case $x$ has non-negative order at all primes of $\calW_{G(x)}$.  Thus, if for some field $K \in I_{G(x)}$ and some $K$-prime $\pp_K$ above a prime of  $\calW_G$ we have that $ \ord_{\pp_K}y > \frac{p-1}{p} \ord_{\pp_K}d$, then 
\[
 \ord_{\pp_K}xy \geq  \ord_{\pp_K}y> \frac{p-1}{p} \ord_{\pp_K}d.
 \]
    Conversely, suppose that $x \in B(K_{\inff},p,a,d) \setminus R_{K_{\inff},\calW_{\inff}}$ and note that in $K=M_p(x)$ we must have for some $K$-prime $\pp_K$ above a prime of $\calW_G$ that 
    \[
     \frac{p-1}{p} \ord_{\pp_K}d <\ord_p x <0.
  \]
    Therefore there exists an $r \in \Z_{\geq 1}$ such that $x^r \in B(K_{\inff},p,a,d)$ but $x^{r+1} \not \in B(K_{\inff},p,a,d)$.  Hence if we set $y =x^r$, we see that $y \in B(K_{\inff},p,a,d)$ but $xy \not \in B(K_{\inff},p,a,d)$.
\end{proof}
\subsection{Defining Integrality at finitely many primes  using complete $q$-boundedness.}
\label{onlyq}
Our next step is to show that we can get away using $q$-boundedness only (without introducing $p$-boundedness for an additional prime $p$).    The integrality at primes of $\calS_{K_{\inff}}$ can be handeled with complete $q$-boundedness only using sets $B(K_{\inff}, q, a, d)$ for appropriately selected $a$ and $d$ as above, since the primes of $\calS_K$ are not factors of $q$.  Thus we need to make special arrangements for factors of $q$ only.  Since we are going to use $q$-boundedness exclusively, we now drop Assumptions and Notation  \ref{not:p} and introduce the following assumptions and notation.
\begin{notationassumptions}
 We will use the following notation and assumptions.
\begin{itemize}
\item Assume all the primes of $\calW_G$ are completely $q$-bounded.  
\item Let $M_q$ be a completely $q$-bounding field for all primes  in $\calW_G$. 
\item Assume $\xi_q \in G$.
\item Let $\calQ_G$ be the set of all factors of $q$ in $G$. 
\item Let $f_{q}=\max_{\qq_{M_q} \in \calQ_{M_q}}\{f(\qq_{M_q}/q)\}$.  
\item Let $F/\Q$ be a totally real cyclic extension of degree $q^{f_q+1}$, where $q$ does not split. (Such an extension exists by Lemma \ref{le:qextensions}.)
\end{itemize}
\end{notationassumptions}
Now consider a cyclic extension $FK_{\inff}/K_{\inff}$ of degree $q^r$ (this extension is cyclic of degree equal to a power of $q$ by Lemma \ref{le:staycyc}), where $0 \leq r \leq f_q+1$.  We claim that in fact $r>0$.  Assume the opposite.  In this case for some $K \in I_{M_q}$ we have that $F \subseteq K$.  But the relative degree of any factor of $q$ in $K$ is at most $f_q$, while the relative degree of all the factors of $q$ in $FK$ is bigger than $f_q$.  Thus, $r>0$.  

Now let $E_{\inff}$ be the unique subfield of $FK_{\inff}$ such that $[FK_{\inff}:E_{\inff}]=q$ and $K_{\inff} \subset E_{\inff}$.  Since $\xi_q \in E_{\inff}$, we must have $FK_{\inff}=E_{\inff}(\sqrt[q]{a})$ for some $a \in E_{\inff}$  (this is so by Theorem 6.2, page 288 of \cite{Langalgebra}). Let $\beta \in E_{\inff}$ generate $E_{\inff}$ over $K_{\inff}$.  Now let $N \in I_{M_q}$ be such that $F \subset N(\sqrt[q]{a},\beta)$, $a \in N(\beta)$, and $\beta$ is of the same degree over $N$ as over $K_{\inff}$.  Let $K \in I_N$ and note that $\beta$ is of the same degree over $K$ as over $N$, $a \in K(\beta)$, and  $F \subset K(\sqrt[q]{a},\beta)$.    Further, $KF=K(\sqrt[q]{a},\beta)/K$ is a cyclic extension of degree $q^r$ for some $r>0$, no factor of $q$ ramifies in this extension  (by Proposition 8 of Chapter II, \S4 of \cite{L}), and no factor of $q$ splits in the extension $K(\sqrt[q]{a},\beta)/K(\beta)$ by Lemma \ref{le:notsplit}.  By Lemma \ref{makereal} we can also assume $a \in \Omega_q(K(b))$.  

Since factors of $q$ in $N(\beta)$ do not ramify in the extension $N(\beta,\sqrt[q]{a})/N(\beta)$, if for some factor $\qq_{N(\beta)}$ of $q$ in $N(\beta)$ we have that $\ord_{\qq_{N(\beta)}}a \not =0$, we also must have $\ord_{\qq_{N(\beta)}}a \equiv 0 \mod q$.  Thus without loss of generality (multiplying $a$ by $q$-th powers of some elements of $N(\beta)$, if necessary), we can assume that $a$ has no occurrences of factors of $q$ in its divisor.  Note that if $q=2$, we would only be multiplying $a$ by squares and thus not changing the fact that $a \in \Omega_2(N(\beta))$.

Now let $\calA_{N(\beta)} \subseteq \calC_{N(\beta)}(q)=\calQ_{N(\beta)}$ and let $d \in N(\beta)$ be such that for all primes $\qq_{N(\beta)} \in \calA_{N(\beta)}$ we have that  $\ord_{\qq_{N(\beta)}}d \not \equiv 0 \mod q$, $\ord_{\qq_{N(\beta)}}d \leq -3\ord_{\qq_{N(\beta)}}q$ and $d$ has no other poles.  As above, such a $d$ exists by the Strong Approximation Theorem.

 The reason for possibly choosing a subset of  factors of $q$ is to point out that in principle we don't have to treat all the factors of $q$ the same way, i.e.  we may want to allow some of the factors in ``denominators'', while banning others.    The proposition below lets us bound the order of the poles the elements of our field can have at factors of $q$ in $\calA_{N(\beta)}$, while imposing no constraints on other factors of $q$.

\begin{proposition}
\label{onlyq}
Let $E_{\inff}$ be defined as above, let  
\[
F_{\inff}=E_{\inff}(\sqrt[q]{1+ d^{-1}},\sqrt[q]{1+ (dx^p+d^p)^{-1}},\sqrt[q]{1+(a+a^{-1})d^{-1}}),
\]
 and let
\[
C(E_{\inff},a,d,q)=\{x \in K_{\inff}|\exists y \in F_{\inff}(\sqrt[q]{a}): {\mathbf N}_{F_{\inff}(\sqrt[q]{a})/F_{\inff}}(y) = dx^q+d^q\}.
\]
We claim $C(E_{\inff},a,d,q)=\{x \in K_{\inff}| \forall K \in I_{N(\beta,x)}\, \forall \qq_K \in \calA_K : \ord_{\qq_K}x > \frac{q-1}{q}\ord_{\qq_K}d \}$.   
\end{proposition}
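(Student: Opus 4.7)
The plan is to mirror the proofs of Propositions \ref{prop:norm} and \ref{finitely many for p not q}, with Proposition \ref{prop:badprimeq} replacing Proposition \ref{prop:badprime} and Proposition \ref{prop:fixorderq} replacing Proposition \ref{prop:fixorder}. The critical point is that the cyclic degree $q$ extension $F_{\inff}(\sqrt[q]{a})/F_{\inff}$ has, by the construction of $F/\Q$ together with Lemma \ref{le:notsplit} and Corollary \ref{cor:notsplit}, the property that every factor of $q$ in $\calA$ is both non-split and unramified in it. Thus $\sqrt[q]{a}$ plays the role the auxiliary $p$-th root played in Proposition \ref{finitely many for p not q}, now applied to factors of $q$ themselves.

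For the forward inclusion, suppose $x \in C(E_{\inff},a,d,q)$ and let $y \in F_{\inff}(\sqrt[q]{a})$ witness the norm equation. Pick $K \in I_{N(\beta,x)}$ large enough that $y \in F_K(\sqrt[q]{a})$ and the local degree of the extension at $K$ matches that over $K_{\inff}$, so that ${\mathbf N}_{F_K(\sqrt[q]{a})/F_K}(y) = dx^q+d^q$. Assume for contradiction that $\ord_{\qq_K}x \leq \frac{q-1}{q}\ord_{\qq_K}d$ at some $\qq_K \in \calA_K$. Since $\gcd(q,q-1)=1$ and $\ord_{\qq_K}d \not\equiv 0 \mod q$, the equality case is impossible, giving the strict inequality $q\ord_{\qq_K}x < (q-1)\ord_{\qq_K}d$. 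The remaining hypotheses of Proposition \ref{prop:badprimeq} hold: $\ord_{\qq_K}d \leq -3\ord_{\qq_K}q$ and $\ord_{\qq_K}d \not\equiv 0 \mod q$ by the choice of $d$, $\ord_{\qq_K}a = 0$ since the divisor of $a$ was arranged to avoid factors of $q$, and $\qq_K$ is non-split in $K(\beta,\sqrt[q]{a})/K(\beta)$ by Corollary \ref{cor:notsplit}. The conclusion then yields a factor $\qq_{F_K}$ of $\qq_K$ in $F_K$ with $\ord_{\qq_{F_K}}(dx^q+d^q) \not\equiv 0 \mod q$ and $\qq_{F_K}$ non-split in $F_K(\sqrt[q]{a})/F_K$, contradicting Lemma \ref{le:cycextensions} applied to $y$.

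For the reverse inclusion, assume the order condition on $x$. Choose $K \in I_{N(\beta,x)}$ large enough that $[F_K(\sqrt[q]{a}):F_K] = q$; solving the norm equation in $F_K(\sqrt[q]{a})$ then suffices, and for this I would invoke Hasse's Norm Principle place by place. At archimedean places we use $a \in \Omega_q(N(\beta))$ together with Lemma \ref{le:real}. At finite places, $F_K(\sqrt[q]{a})/F_K$ is unramified: at factors of $q$ because $F/\Q$ is unramified at $q$ (Lemma \ref{le:qextensions}) combined with Proposition 8 of Chapter II, \S 4 of \cite{L}, and at other primes because Proposition \ref{prop:fixorderq} forces $\ord_{\aaa_{F_K}} a \equiv 0 \mod q$ away from poles of $d$ and $x$ (and then Lemma \ref{le:first} applies). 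At any non-split finite prime one needs $\ord(dx^q+d^q) \equiv 0 \mod q$: at a factor of $q$ in $\calA$ the assumed inequality $\ord x > \frac{q-1}{q}\ord d$ forces $\ord(dx^q+d^q) = q\ord d$, while at any other non-split prime Proposition \ref{prop:fixorderq} supplies the claim directly. Local solvability then follows because any element of order divisible by $q$ is a local $q$-th power times a unit, and units are norms in an unramified extension.

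The main technical hurdle is verifying unramifiedness of $F_K(\sqrt[q]{a})/F_K$ uniformly at every finite prime, since the arguments for factors of $q$ and for other primes are quite different. One must combine the construction-driven non-ramification at factors of $q$ (via $F/\Q$ and Lemma \ref{le:notsplit}) with the ramification-trapping behavior of the auxiliary generators $\sqrt[q]{1+d^{-1}}$, $\sqrt[q]{1+(dx^q+d^q)^{-1}}$, and $\sqrt[q]{1+(a+a^{-1})d^{-1}}$, which forces the order of $a$ to be divisible by $q$ at every remaining prime of $F_K$.
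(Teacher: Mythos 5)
Your proposal takes essentially the same route as the paper: the author's own proof is a one-line remark that the argument is the same as for Proposition \ref{finitely many for p not q}, with Proposition \ref{prop:badprimeq} and Proposition \ref{prop:fixorderq} substituted for Proposition \ref{prop:badprime} and Proposition \ref{prop:fixorder}, and that is precisely what you carry out. The details you fill in (the strict-inequality reduction from $\gcd(q,q-1)=1$, the verification of the hypotheses of Proposition \ref{prop:badprimeq}, and the Hasse Norm Principle check at archimedean, factor-of-$q$, and remaining finite places via the ramification-trapping generators of $F_{\inff}$) all match what the cited propositions are designed to supply.
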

\begin{proof}
The proof of this proposition is almost identical to the proof of Proposition \ref{finitely many for p not q} except that it relies on Proposition \ref{prop:badprimeq} and Proposition \ref{prop:fixorderq}  in lieu of Proposition \ref{prop:badprime} and Proposition \ref{prop:fixorder}.
\end{proof}

As above, Lemma \ref{integral} allows us to use the definition of $C(E_{\inff},a,d,q)$ to obtain a definition $R_{K_{\inff}, \calA_{\inff}}$.  With the definition of  $R_{K_{\inff}, \calA_{\inff}}$ in mind, we now modify slightly the definition in Corollary \ref{def1} to replace $ \Phi_q(K_{\inff},\calS_{K_{\inff}}) \cap \Phi_q(K_{\inff})$ with  an expression involving  $R_{K_{\inff},\calW_{\inff}}$.   We also state the corresponding definitions of $O_{K_{\inff},\calS_{K_{\inff}}}$ for the case where $\calS_G \cap \calQ_G = \emptyset$,  and $O_{K_{\inff}}$. Let $w, \hat w \in G$ be such that 
\be
\item $\ord_{\qq_G}w=3\ord_{\qq_G}q$ for any $\qq_G \in \calC_G(q)$, 
\item $\ord_{\pp_G}w =1$ for any $\pp_G \in \calS_G$,
\item  $w$ has no other zeros. 
\item  $\ord_{\qq_G}\hat w=3\ord_{\qq_G}q$ for any $\qq_G \in \calC_G(q)$, 
\item $\hat w$ has no other zeros. 
\ee
(As above such elements $w$ and $\hat w$ exist by the Strong Approximation Theorem.)
\begin{corollary}
\label{diffversion} 
\be
\item
 \[
 x \in O_{K_{\inff},\calW_{K_{\inff}}}, x \not=0
 \]
 \[
 \Updownarrow
 \]
 \[
\forall c \mbox{ such that } \left(\frac{(c-1)}{w} \in R_{K_{\inff},\calW_{\inff}} \land c \in \Omega_q(K_{\inff}) \right)\forall b \in K_{\inff} 
\]
\[
 ((bx^q+b^q=0) \lor  \exists y \in L_{\inff}(\sqrt[q]{c}):{\mathbf N}_{L_{\inff}(\sqrt[q]{c})/L_{\inff}}(y) = bx^q+b^q).
\]
\item   \[
 x \in O_{K_{\inff},\calS_{K_{\inff}}}, x \not =0 
 \]
 \[
 \Updownarrow
 \]
 \[
x \in R_{K_{\inf}, \calQ_{\inff}} \land \forall c \mbox{ such that } \left(\frac{(c-1)}{w} \in R_{K_{\inff},\calW_{\inff}} \land c \in \Omega_q(K_{\inff}) \right)\forall b \in K_{\inff}
\]
\[
 ((bx^q+b^q=0) \lor \exists y \in L_{\inff}(\sqrt[q]{c}): {\mathbf N}_{L_{\inff}(\sqrt[q]{c})/L_{\inff}}(y) = bx^q+b^q).
\]
\item  \[
 x \in O_{K_{\inff}}, x \not=0 
 \]
 \[
 \Updownarrow
 \]
  \[
x \in R_{K_{\inf}, \calQ_{\inff}} \land \forall c \mbox{ such that } \left(\frac{(c-1)}{\hat w} \in R_{K_{\inff},\calQ_{\inff}} \land c \in \Omega_q(K_{\inff}) \right)\forall b \in K_{\inff} 
\]
\[
((bx^q+b^q=0)\lor \exists y \in L_{\inff}(\sqrt[q]{c}): {\mathbf N}_{L_{\inff}(\sqrt[q]{c})/L_{\inff}}(y) = bx^q+b^q).
\]
\ee
\end{corollary}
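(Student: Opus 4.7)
The plan is to reduce all three claims to Corollary \ref{def1} by showing that the new condition "$\frac{c-1}{w} \in R_{K_{\inff},\calW_{\inff}}$" (and its analogue with $\hat w$ and $R_{K_{\inff},\calQ_{\inff}}$) selects a set of $c$'s that is both small enough to be contained in $\Theta_q(K_{\inff},\calS_{K_{\inff}}) \cap \Phi_q(K_{\inff})$ and large enough for the Strong Approximation argument in the reverse direction.

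First I would verify by a direct local calculation that $\frac{c-1}{w} \in R_{K_{\inff},\calW_{\inff}}$ implies $c \in \Theta_q(K_{\inff},\calS_{K_{\inff}}) \cap \Phi_q(K_{\inff})$. Indeed, for every $K \in I_G$ with $c \in K$ and every $\pp_K$ above a prime of $\calW_G$, integrality of $\frac{c-1}{w}$ at $\pp_K$ gives $\ord_{\pp_K}(c-1) \geq \ord_{\pp_K}w$; the construction of $w$ then yields $\ord_{\pp_K}(c-1) \geq e(\pp_K/\pp_G) \geq 1$ for $\pp_G \in \calS_G$ and $\ord_{\qq_K}(c-1) \geq 3\ord_{\qq_K}q$ for $\qq_G \in \calQ_G$. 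Hence the ``$\Rightarrow$'' direction of part (1) is immediate from Corollary \ref{def1}: if $x \in O_{K_{\inff},\calW_{K_{\inff}}}$, the norm equation is solvable for all $c \in \Theta_q \cap \Phi_q \cap \Omega_q$, so in particular for the smaller class of $c$ picked out by the new formula.

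For the ``$\Leftarrow$'' direction of part (1) I would replay the construction from the proof of Corollary \ref{def1}: pick a pole $\pp_{G(x)} \notin \calW_{G(x)}$ of $x$ (which exists and is $q$-bounded by hypothesis), pass to a $q$-bounding field $M \in I_{G(x)}$, and use the Strong Approximation Theorem to produce $c \in M$ simultaneously satisfying (i) $\ord_{\pp_M}(c-1) \geq \ord_{\pp_M}w$ at every $\pp_M$ above $\calW_G$, (ii) $c$ is a non-$q$-th-power unit modulo a chosen prime $\pp_M$ along the $q$-bounded path, and (iii) $c$ is positive at every real embedding of $M$. These conditions are prescribed at pairwise disjoint finite sets of places, so Strong Approximation applies. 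Since $w \in G \subset M$ has fixed local orders, condition (i) propagates under extension: for $K \in I_M$ containing $c$ and $\pp_K$ above $\pp_M$, one has $\ord_{\pp_K}(c-1) = e(\pp_K/\pp_M)\ord_{\pp_M}(c-1) \geq e(\pp_K/\pp_M)\ord_{\pp_M}w = \ord_{\pp_K}w$. Thus $\frac{c-1}{w} \in R_{K_{\inff},\calW_{\inff}}$ and $c \in \Omega_q(K_{\inff})$, while $b \in M$ with $\ord_{\pp_M}b = -1$ defeats the norm equation exactly as in Corollary \ref{def1}.

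Parts (2) and (3) then follow from part (1) by intersecting with $R_{K_{\inff},\calQ_{\inff}}$. Under the disjointness hypothesis $\calS_G \cap \calQ_G = \emptyset$ one has $O_{K_{\inff},\calS_{K_{\inff}}} = O_{K_{\inff},\calW_{K_{\inff}}} \cap R_{K_{\inff},\calQ_{\inff}}$, so adjoining the clause ``$x \in R_{K_{\inff},\calQ_{\inff}}$'' to the formula of part (1) gives part (2). Part (3) is the case $\calS_G = \emptyset$, so $\calW_G = \calQ_G$ and $\hat w$ takes the role of $w$; the same derivation yields $O_{K_{\inff}}$ since $O_{K_{\inff}} = O_{K_{\inff},\calQ_{K_{\inff}}} \cap R_{K_{\inff},\calQ_{\inff}}$. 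The main obstacle throughout is the Strong Approximation step in part (1), where one must package together the divisibility condition induced by $w$, the non-$q$-th-power residue condition at $\pp_M$, and the archimedean positivity condition; but because these are imposed at pairwise disjoint places, no obstruction arises.
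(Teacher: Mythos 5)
Your proof is correct and follows the same general route as the paper's, namely reduction to Corollary \ref{def1}. The paper's own argument simply asserts that the condition $\frac{c-1}{w}\in R_{K_{\inff},\calW_{\inff}}$ (together with $c\in\Omega_q$) is \emph{equivalent} to $c\in\Theta_q(K_{\inff},\calS_{K_{\inff}})\cap\Phi_q(K_{\inff})\cap\Omega_q(K_{\inff})$, so that the quantifier over $c$ ranges over exactly the same set as in Corollary \ref{def1}. You instead prove only the containment ``new set $\subseteq$ old set'' and then, for the hard direction, rerun the Strong Approximation step from the proof of Corollary \ref{def1} with the stronger requirement $\ord_{\pp_M}(c-1)\ge\ord_{\pp_M}w$ at primes over $\calW_G$, observing that this is compatible with the non-residue and positivity conditions because they live at disjoint places, and that it propagates up the tower since $w$ lies in $G$. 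This is actually a more careful argument: the reverse containment claimed in the paper (that $c\in\Theta_q\cap\Phi_q$ forces $\frac{c-1}{w}$ to be integral at primes above $\calS_G$) requires $\ord_{\pp_M}(c-1)\ge e(\pp_M/\pp_G)$, whereas $\Theta_q$ only guarantees $\ord_{\pp_M}(c-1)\ge 1$; your version sidesteps the issue entirely by not relying on that containment. The reduction of parts (2) and (3) to part (1) via intersection with $R_{K_{\inff},\calQ_{\inff}}$ is also the intended argument.
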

\begin{proof}
We show that the first formula defines the right set.  The argument for the other two definitions is similar.  It is enough to observe the following.  In any $K \in I_N$ the numerator of the divisor of $c-1$ is divisible by the numerator of the divisor of $q^3$ and by every $\pp_K$ in $\calS_K$.    Thus  $c \in \Theta_q(K,\calS_K) \cap \Phi_q(K)$.  Conversely, if $c \in \Theta_q(K,\calS_K) \cap \Phi_q(K)$, then  the divisor $c-1$ is divisible by the numerator of the divisor of $q^3$ and by every $\pp_K$ in $\calS_K$ and therefore $\frac{c-1}{w}$ does not have any poles at primes of $\calW_K$, so that $\frac{(c-1)}{w} \in R_{K_{\inff},\calW_{\inff}}$.
\end{proof}

\begin{theorem}
\label{thm:maindef}
Let $p, q$ be rational prime numbers, not necessarily distinct.  Let $H$ be a number field, and let $H_{\inff}$ be an algebraic extension of $H$.  Let $\calS_H$ be a finite, possibly empty, set of primes of $H$.  Assume all primes of $H$ not in $\calS_H$ are heredeterily $q$-bounded in $H_{\inff}$, and primes in $\calS_H$ and factors of $q$ are completely $p$-bounded in $H_{\inff}$.  In this case, the integral closure of $O_{H,\calS_H}$ in $H_{\inff}$ is first-order definable over $H_{\inff}$.
\end{theorem}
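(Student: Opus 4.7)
The plan is to assemble the machinery of Section \ref{inf ext} into a single first-order formula. The heart of the argument is the universal--existential definition in the second part of Corollary \ref{diffversion}, which expresses the integral closure of $O_{G,\calS_G}$ in $K_{\inff}$ once (i) the roots of unity $\xi_q$ (and $\xi_p$ if $p\neq q$) are available as constants, and (ii) the rings $R_{K_{\inff},\calW_{\inff}}$ and $R_{K_{\inff},\calQ_{\inff}}$ of elements integral at the primes above $\calW_G=\calS_G\cup\calQ_G$ and at the factors of $q$ respectively are first-order definable with parameters.

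First I would reduce to the case $\xi_q,\xi_p\in H_{\inff}$. Setting $\tilde H_{\inff}=H_{\inff}(\xi_q,\xi_p)$, the extension $\tilde H_{\inff}/H_{\inff}$ has degree $n$ dividing $(q-1)(p-1)$. Both hypotheses of the theorem transfer to the enlarged field: when we pass to a prime above in $\tilde H$, local degrees are affected by at most a factor of $n$, so hereditary $q$-boundedness is preserved (with a bound on the $q$-adic valuation of local degrees shifted by at most $\ord_q n$), and complete $p$-boundedness is preserved by taking $\tilde M_p=M_p(\xi_q,\xi_p)$ as the new bounding field. Once a definition of the integral closure $\tilde R$ of $O_{\tilde H,\calS_{\tilde H}}$ in $\tilde H_{\inff}$ is obtained, I would interpret $\tilde H_{\inff}$ inside $H_{\inff}$ in the standard way, encoding each element of $\tilde H_{\inff}$ as an $n$-tuple of elements of $H_{\inff}$ with ring operations determined by the minimal polynomial of a primitive element; the integral closure of $O_{H,\calS_H}$ in $H_{\inff}$ is then $\tilde R\cap H_{\inff}$, first-order definable in $H_{\inff}$.

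Working henceforth with $\xi_q,\xi_p\in H_{\inff}$, I would fix a number field $G\subset H_{\inff}$ containing $\xi_q,\xi_p$, a primitive element of a common completely $p$-bounding field $M_p$ for every prime of $\calW_G$ (or $M_q(\beta)$ in the diagonal case $p=q$), the primes $\calS_G$ lying above $\calS_H$, and the auxiliary parameters $b,c,d,a,w,\hat w$ appearing in Corollary \ref{diffversion} and in Propositions \ref{finitely many for p not q} and \ref{onlyq}. Proposition \ref{finitely many for p not q} (when $p\neq q$) or Proposition \ref{onlyq} (when $p=q$), combined with Lemma \ref{integral}, then produces first-order definitions of $R_{H_{\inff},\calW_{\inff}}$ and $R_{H_{\inff},\calQ_{\inff}}$ with parameters in $G$. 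Substituting these definitions into the second formula of Corollary \ref{diffversion} yields a first-order definition of the integral closure of $O_{H,\calS_H}$ in $H_{\inff}$, using along the way that $\Omega_q(H_{\inff})$ is either all of $H_{\inff}$ (when $q>2$ or $H_{\inff}$ has no real embeddings) or the set of sums of four squares by Lemma \ref{always non-negative}, and hence is first-order definable.

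The step that requires the most care is the first-order translation of the floating norm equation ${\mathbf N}_{L_{\inff}(\sqrt[q]{c})/L_{\inff}}(y)=bx^q+b^q$, whose ambient field $L_{\inff}(\sqrt[q]{c})$ depends on the parameters $x,b,c$. The key observation is that this field has degree dividing $q^4$ over $H_{\inff}$, because each of the four radicals $\sqrt[q]{1+x^{-1}}$, $\sqrt[q]{1+(bx^q+b^q)^{-1}}$, $\sqrt[q]{1+(c+c^{-1})x^{-1}}$, $\sqrt[q]{c}$ generates an extension of degree at most $q$. Representing $y$ as a formal polynomial of degree less than $q$ in each radical with $q^4$ unknown coefficients in $H_{\inff}$ expands the norm, computed symbolically as the product over the Galois conjugates, into a polynomial expression in these coefficients and the parameters. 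The existential quantifier $\exists y$ then becomes an existential quantifier over $H_{\inff}^{q^4}$ subject to polynomial equations expressing both the defining relations of the radicals and the norm equality; the same bounded-degree device handles the norm equations appearing in Propositions \ref{finitely many for p not q} and \ref{onlyq}. The main obstacle is verifying that this symbolic encoding remains faithful under all specializations of the parameters, including when some radical extensions degenerate; since the pole-catching arguments of Propositions \ref{prop:badprime} and \ref{prop:badprimeq} produce a non-degenerate radical precisely at the $q$-bounded prime witnessing the pole, the encoding correctly rejects non-integral $x$, while for integral $x$ any solution $y$ of the global norm equation furnishes values of the $q^4$ symbolic coefficients, so the formula remains equivalent to the set-theoretic description of Corollary \ref{diffversion}.
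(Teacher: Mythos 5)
Your proposal follows essentially the same route as the paper: pass to $G=H(\xi_q,\xi_p)$, $K_{\inff}=H_{\inff}(\xi_q,\xi_p)$, use Corollary \ref{diffversion} together with Propositions \ref{finitely many for p not q}, \ref{onlyq} and Lemma \ref{integral} to define the auxiliary rings $R_{K_{\inff},\calW_{\inff}}$, $R_{K_{\inff},\calQ_{\inff}}$, handle $\Omega_q$ via Lemma \ref{always non-negative}, and translate the floating norm equation into a polynomial system over $H_{\inff}$ of degree bounded by $q^4$, interpreting $K_{\inff}$ over $H_{\inff}$ coordinatewise at the end.

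The one place your explanation diverges from the paper's argument is the last step, the faithfulness of the symbolic rewriting when a radical degenerates. You appeal to Propositions \ref{prop:badprime} and \ref{prop:badprimeq} to say that a nondegenerate radical is produced at the witnessing prime, but those propositions only guarantee that $\sqrt[q]{c}$ (respectively $\sqrt[q]{a}$) is a genuine degree-$q$ generator; they say nothing about the three auxiliary radicals $\sqrt[q]{1+x^{-1}}$, $\sqrt[q]{1+(bx^q+b^q)^{-1}}$, $\sqrt[q]{1+(c+c^{-1})x^{-1}}$, any of which may collapse into the base for particular $x,b,c$. What the paper actually does at this step is a purely formal device: the radical is treated as a free indeterminate $\Gamma$ over the polynomial ring, the norm expression is reduced modulo the ideal $(\Gamma^q-1-(C+C^{-1})X^{-1})$, and the coefficients $N_i$ of powers of $\Gamma$ are read off as a system of equations; this system is then shown to be equivalent to the norm equation in both directions regardless of whether $\gamma$ lies in the base field. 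So the justification is independent of the pole-catching lemmas and applies uniformly to all specializations of the parameters. Your overall plan is correct, but you should replace the appeal to Propositions \ref{prop:badprime}--\ref{prop:badprimeq} with this coefficient-gathering argument in the degenerate case.
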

\begin{proof}
Given an arbitrary number field $H$ and an algebraic extension $H_{\inff}$ of $H$, not necessarily containing any roots of unity required above, we have to show that the norm equations we have been using in our definitions can be rewritten as polynomial equations with relevant solutions in $H_{\inff}$. Below we present an informal outline of this rewriting process.  For a more general and formal discussion of the rewriting techniques we refer the reader to the section on coordinate polynomials in \cite{Sh34}.  Let $G=H(\xi_q, \xi_p), K_{\inff}=H_{\inff}(\xi_q,\xi_p)$. 

  We start with rewriting the norm equation itself.   If $T$ is any field of characteristic 0 and $c \in T \setminus T^q$, $u_1,\ldots, u_{q}, z \in T$, $y = \sum_{i=1}^{q}a_i\sqrt[q]{c}^{(i-1)}$, then
\begin{equation}
\label{eq:normrewrite}
{\mathbf N}_{T(\sqrt[q]{c})/T}(y)-z =\prod_{j=0}^{q-1}\sum_{i=1}^{q}u_i\xi_q^{(i-1)j}\sqrt[q]{c}^{(i-1)}-z=N(u_1,\ldots,u_{q},c,z) \in \Z[U_1,\ldots,U_{q}, C, Z],
\end{equation}
and the coefficients of $N(U_1,\ldots,U_{q}, C, Z)$ depend on $q$ only.

If $c, w \in T, c=w^q$, then for any $z \in T$ the equation $N(U_1,\ldots,U_q,c,z)=0$ has solutions $a_1,\ldots, a_{q} \in T(\xi_q)$.
Indeed, consider the following system of equations:
\[
\left \{
\begin{array}{c}
\sum_{i=0}^{q-1}a_iw^{i}=z,\\
\sum_{i=0}^{q-1}a_i\xi_q^{ij}w^{i}=1, j=1,\ldots,q-1
\end{array}
\right .
\]
This is a nonsingular system with a matrix $(\xi_q^{ij}w^{i}), i=0,\ldots, q-1,j = 0,\ldots,q-1$ having all of its entries in $T(\xi_q)$.  Since the vector $(z,1,\ldots,1)$ also has all of its entries in $F(\xi_q)$,we conclude that the system has a unique solution in $F(\xi_q)$.
So if we, for example, consider ${\mathbf N}_{L_{\inff}(\sqrt[q]{c})/L_{\inff}}(y) = bx^q+b^q$ with  potential solutions $y$ ranging over $L_{\inff}(\sqrt[q]{c})$, then we can conclude that that this norm equation is equivalent to a polynomial equation 
\begin{equation}
\label{stepdown}
N(u_1,\ldots,u_{q},c,bx^q+b^q) =0
\end{equation}
 with coefficients in $\Z$ and potential solutions 
 \[
 u_1,\ldots, u_q \in L_{\inff}=K_{\inff}(\sqrt[q]{1+ x^{-1}}, \sqrt[q]{1+ (bx^q+b^q)^{-1}},\sqrt[q]{ 1+(c+c^{-1})x^{-1}}).  
\]
We now would like to replace \eqref{stepdown} by an equivalent equation but with solutions in 
\[
L_{2,\inff}=K_{\inff}(\sqrt[q]{1+ x^{-1}}, \sqrt[q]{1+ (bx^q+b^q)^{-1}}).
\]  We have to consider two options: either there exists $\gamma \in L_{2,{\inff}}$ such that 
\begin{equation}
\label{eq:gamma}
\gamma^q=1+(c+c^{-1})x^{-1}
\end{equation}
 and in this case all the solutions $u_1,\ldots, u_q \in L_{2, \inff}$, or $1+(c+c^{-1})x^{-1}$ is not a $q$-th power in $L_{2,\inff}$ so that $u_i=\sum_{j=0}^{q-1}u_{i,j}\gamma^j$, where $\gamma$ is as in \eqref{eq:gamma} and $u_{i,j} \in L_{2,\inff}$.  In the latter case we can rewrite \eqref{stepdown} first as 
\begin{equation}
\label{eq:system}
N(\sum_{j=0}^{q-1}u_{1,j}\gamma^j,\ldots,\sum_{j=0}^{q-1}u_{q,j}\gamma^j,c,bx^q+b^q) =0,
\end{equation}
 and then as a system of equations over $L_{2,\inff}$ using the fact that the first $q-1$ powers of $\gamma$ are linearly independent over $L_{2,\inff}$.  In other words, we rewrite \eqref{eq:system} first as 
\begin{equation}
\label{eq:powers}
\sum_{i=0}^{q-1}N_i(u_{1,0},\ldots, u_{q,q-1},c,b, x)\gamma^i =0,
\end{equation}
where $N_i$ are polynomials in listed variables with coefficients in $\Z$, by systematically replacing $\gamma^q$ via $1+(c+c^{-1})x^{-1}$ and clearing the denominators (i.e. clearing $c$ from denominators by multiplying through by a sufficient high power of $c$), and then as a system 
\begin{equation}
\label{eq:ind}
\bigwedge_{i=0}^{q-1} N_i(u_{1,0},\ldots, u_{q,q-1},c,b, x)=0.
\end{equation}
Note that, even if $\gamma \in L_{2,\inff}$, we can still replace \eqref{stepdown} by \eqref{eq:ind}.  To see this reconsider \eqref{eq:system} as
\begin{equation}
\label{eq:systemv}
N(\sum_{j=0}^{q-1}U_{1,j}\Gamma^j,\ldots,\sum_{j=0}^{q-1}U_{q,j}\Gamma^j,C,BX^q+B^q) =0,
\end{equation}
with $U_{i,j}, X, C, B, \Gamma$ algebraically independent over $\tilde \Q$,  and produce a system of equations 
\begin{equation}
\label{eq:ind2}
\bigwedge_{i=0}^{q-1} N_i(U_{1,0},\ldots, U_{q,q-1},C,B, X)=0
\end{equation}
by the process described above, first systematically replacing  $\Gamma^q$  by $1+(C+C^{-1})X^{-1}$ and clearing the denominators (this time removing $C$ from denominators), and then treating the first $q-1$ powers of $\Gamma$ as linearly independent over the polynomial ring $\tilde \Q[U_{i,j},X, X^{-1}, C, C^{-1}, B]$. Observe that the left side of \eqref{eq:systemv} is equivalent to 
\[
\sum_{i=0}^{q-1} N_i(U_{1,0},\ldots, U_{q,q-1},C,B, X)\Gamma^i
\]
 modulo the ideal $(\Gamma^q -1-(C+C^{-1})X^{-1})$ in the ring $\tilde \Q[X, X^{-1}, C, C^{-1}, U_{i,j} B, \Gamma]$.  In other words for any values of $U_{i,j}, X \not =0, C\not =0, B$ in $\tilde \Q$   satisfying the system \eqref{eq:ind2}, we have that \eqref{eq:systemv} will be satisfied as long as $\Gamma$ is set to a value $\gamma \in \tilde \Q$ satisfying \eqref{eq:gamma} where $c$ and $x$ are the $\tilde \Q$-values assigned to $C$ and $X$ respectively. Of course if \eqref{eq:system} has solutions in $L_{2,\inff}$ we can always find solutions to the system \eqref{eq:ind} whether or not $\gamma \in L_{2,\inff}$.

Thus for any $c, b, x \in K_{\inff}$ we can conclude that \eqref{stepdown} has solutions $u_1,\ldots, u_q \in L_{\inff}$ if and only if there exist $u_{1,0}, \ldots, u_{q, q-1} \in  L_{2,\inff}$ satisfying \eqref{eq:system}.

  Proceeding in the same fashion we can eventually obtain an equivalent system of equations with potential solutions in $K_{\inff}$.  Now if a given field $H_{\inff}$ does not contain $\xi_q$ or $\xi_p$, then we can rewrite all the equations one more time so that the final system has solutions   and coefficients in $H_{\inff}$.  
\end{proof}
We can also separate out  results concerning integrality at finitely many primes.
  \begin{theorem}
  \label{finite}
The following statements are true.
\be
\item  If a $G$-prime $\pp_G$ is completely $q$-bounded, $M$ is a $q$-bounding field for $\pp_G$,  $b \in K_{\inff}$ is such that for some $\pp_{M(b)} \in \calC_{M(b)}(\pp_G)$ we have that $\ord_{\pp_{M(b)}}b \not \equiv 0 \mod q \land \ord_{\pp_{M(b)}}b <0$, and $b$ has no other poles, then the set of all elements $x \in K_{\inff}$ such that $\ord_{\pp_{M(x,b)}}x \geq \frac{q-1}{q} \ord_{\pp_{M(x,b)}}b$ for all $\pp_{M(x,b)} \in \calC_{M(b,x)}(\pp_{M(b)})$ is existentially definable.   (For future reference in Section \ref{elliptic curves} denote this set by $\mbox{Int}(b,\pp_{M(b)}, q)$.)
\item  If  ramification degrees over $G$ of all factors of $\pp_G$  in  number fields contained in $I_G$ are uniformly  bounded, then the integral closure of the valuation ring of $\pp_G$ in $K_{\inff}$ is existentially definable.  
\ee
\end{theorem}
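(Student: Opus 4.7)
The plan for Part 1 is to localize the constructions of Proposition \ref{finitely many for p not q} and Section \ref{onlyq} to the single chosen factor $\pp_{M(b)}$ instead of to a finite set of primes. Suppose first that $\pp_G$ does not divide $q$. Using complete $q$-boundedness together with Strong Approximation, I would fix an auxiliary element $a \in M(b)$ lying in $\Phi_q(M(b)) \cap \Omega_q(M(b))$ and not a $q$-th power modulo $\pp_{M(b)}$, and consider the norm equation
\[
{\mathbf N}_{L_{\inff}(\sqrt[q]{a})/L_{\inff}}(y) = bx^q + b^q,
\]
with $L_{\inff}$ built from $a,b,x$ as in Notation \ref{not:1}. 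The argument of Proposition \ref{prop:norm} adapts essentially verbatim: Proposition \ref{prop:badprime} together with complete $q$-boundedness of $\pp_G$ and Lemma \ref{le:cycextensions} forbids a solution whenever $x$ violates the target bound at some factor of $\pp_{M(b)}$ in $M(x,b)$, while Proposition \ref{prop:fixorder} combined with the Hasse Norm Principle produces a solution whenever $x$ satisfies the bound. If instead $\pp_G \mid q$, I would substitute the Section \ref{onlyq} construction, replacing Propositions \ref{prop:badprime} and \ref{prop:fixorder} by their $q$-companions \ref{prop:badprimeq} and \ref{prop:fixorderq} and working inside the auxiliary field $E_{\inff}$; the localization to a single factor is otherwise identical. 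As in Theorem \ref{thm:maindef}, the resulting norm equation unwinds into a finite polynomial system over $K_{\inff}$, and since the quantifiers remaining are existential, this yields the desired existential definition of $\mbox{Int}(b,\pp_{M(b)},q)$.

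For Part 2, let $E$ be the uniform bound on $e(\pp_K/\pp_G)$ for $K\in I_G$ and $\pp_K \in \calC_K(\pp_G)$, and select $b \in G$ with $\ord_{\pp_G}b = -1$ and no other poles (Strong Approximation). Applying Part 1 gives an existentially definable set $\mbox{Int}(b,\pp_G,q)$ whose elements satisfy $\ord_{\pp_K} x \geq -\lfloor (q-1)E/q \rfloor$ at every factor of $\pp_G$ in every $K \in I_G$; that is, denominators at $\pp_G$ are uniformly bounded in magnitude. I would then mimic Lemma \ref{integral}: the integral closure of the valuation ring of $\pp_G$ in $K_{\inff}$ consists of those $x \in \mbox{Int}(b,\pp_G,q)$ such that $xy \in \mbox{Int}(b,\pp_G,q)$ for every $y \in \mbox{Int}(b,\pp_G,q)$. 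The reason is that if $x$ had a pole of some order $-r < 0$ at a factor of $\pp_G$, one can choose $s \in \Z_{\geq 0}$ so that $y = x^s$ still lies in $\mbox{Int}(b,\pp_G,q)$ while $xy = x^{s+1}$ overflows the bound; conversely, if $x$ is integral at every factor of $\pp_G$, multiplication by $x$ manifestly preserves the order inequality defining the set.

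The main obstacle I anticipate is the $\pp_G \mid q$ branch of Part 1: the auxiliary element $a$ must simultaneously lie in $\Phi_q \cap \Omega_q$, fail to be a $q$-th power at the chosen factor of $\pp_{M(b)}$, and retain this non-power property after descent through a completely $q$-bounding field, all while the $q$-th root is taken inside the subfield $E_{\inff} \subset FK_{\inff}$ rather than in $K_{\inff}$ itself. Lemmas \ref{le:qextensions} and \ref{le:notsplit}, together with Lemma \ref{makereal} when $q=2$, should however furnish exactly the cyclic degree $q$ extension in which the chosen factor of $\pp_G$ remains inert, after which Strong Approximation supplies a suitable $a$ and the Hasse Norm computation at the inert local place proceeds as in Proposition \ref{prop:norm}.
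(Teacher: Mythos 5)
The paper gives no proof of Theorem~\ref{finite}; it is stated immediately after Proposition~\ref{finitely many for p not q}, Lemma~\ref{integral} and Proposition~\ref{onlyq} and is meant to be read off from those. So the comparison has to be against that implicit argument.

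For Part~1 your case split is correct, but the construction you invoke when $\pp_G \nmid q$ is the wrong one. You take the tower $L$ of Notation~\ref{not:1} (the one built with $\sqrt[q]{1+x^{-1}}$ and $\sqrt[q]{1+(a+a^{-1})x^{-1}}$) and cite Propositions~\ref{prop:badprime} and~\ref{prop:fixorder}. That tower ramifies the \emph{zeros of $x$}, not the zeros of $b$. But the set $\mbox{Int}(b,\pp_{M(b)},q)$ places no restriction on the poles of $x$ away from factors of $\pp_{M(b)}$, so $x$ is free to have a pole at a $K$-prime $\aaa$ which is a zero of $b$ of order $s\not\equiv 0\bmod q$. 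At such an $\aaa$ one has $\ord_{\aaa}(bx^q+b^q)=s+q\ord_{\aaa}x\not\equiv 0\bmod q$, and in the $L$-tower $\aaa$ is \emph{not} ramified (all three radicands are $\equiv 1 \bmod \aaa$), nor is $\ord b$ forced into $q\Z$. Since the fixed $a$ is not controlled away from $\pp_{M(b)}$ and the factors of $q$, $\aaa$ may well be inert in $L(\sqrt[q]{a})/L$, so the Hasse condition fails at $\aaa$ and the norm equation has no solution even though $x\in\mbox{Int}(b,\pp_{M(b)},q)$. The fix is exactly what the paper does in Proposition~\ref{finitely many for p not q}: use the $F$/$N$-type tower of Notation~\ref{not:1} with $d=b$, namely adjoin $\sqrt[q]{1+b^{-1}}$, $\sqrt[q]{1+(bx^q+b^q)^{-1}}$, $\sqrt[q]{1+(a+a^{-1})b^{-1}}$, and then quote (suitably trimmed, since $\pp_G\nmid q$ lets you drop the $\ord d\leq -3\ord q$ hypothesis) Propositions~\ref{prop:badprimeq} and~\ref{prop:fixorderq} rather than~\ref{prop:badprime} and~\ref{prop:fixorder}. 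Your treatment of the $\pp_G\mid q$ branch, via Section~\ref{onlyq}, does use the correct tower; the irony is that the same tower is what you need in the other branch too.

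For Part~2 there is a more substantial mismatch. You define the integral closure as $\{x\in \mbox{Int}(b,\pp_G,q): \forall y\in \mbox{Int}(b,\pp_G,q)\; xy\in \mbox{Int}(b,\pp_G,q)\}$, exactly as in Lemma~\ref{integral}. That is a perfectly good \emph{first-order} definition, but it has a universal quantifier in front, while the statement you are proving asserts \emph{existential} definability. The multiplication trick of Lemma~\ref{integral} is precisely the step the paper reserves for the case of unbounded ramification (see the overview: ``if ramification degrees \dots\ are altogether bounded, then we can arrange for this set to be the set of all field elements integral at factors of $q$ \dots\ in a general case \dots\ we will need one more step as described in Lemma~\ref{integral}''). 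The point of the bounded-ramification hypothesis is to let you pick the parameter $b$ and a base field $M_0$ so that $\ord_{\pp_K}b=-1$ for every factor $\pp_K$ of $\pp_G$ in every $K\in I_{M_0}$, which forces $\frac{q-1}{q}\ord_{\pp_K}b\in(-1,0)$ and hence $\mbox{Int}(b,\pp_G,q)$ to coincide with the integral closure of the valuation ring outright, with no universal quantifier needed. In your write-up you compute the denominator bound $-\lfloor(q-1)E/q\rfloor$ and then concede it is negative; that concession is exactly what the hypothesis is there to eliminate. If you wish to present this carefully you should argue (or at least record as part of the hypothesis) that the uniform bound allows ramification over $G$ to stabilize at a fixed $M_0$, and then build $b$ over that $M_0$.

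The high-level plan — localize the norm equation to a single prime, split into $\pp_G\mid q$ and $\pp_G\nmid q$, and then, for Part~2, normalize $b$ — is the right one, but as written both parts have gaps: the wrong auxiliary tower in Part~1 leaves an uncontrolled local obstruction, and the quantifier in Part~2 delivers a first-order rather than existential definition.
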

We now make use of unbounded primes.
\begin{theorem}
Let $\calS_G\cup \{\mbox{ factors of } q\}$ be a completely $q$-bounded in $K_{\inff}$ finite set of primes of $G$, and let $R_{\inff,\calS_G}$ be a subring of $K_{\inff}$ such that $x \in R_{\inff,\calS_G}$ if and only if in $G(x)$ the poles of  $x$ are either factors of $q$ or primes of $\calS_G$, or are at primes that are $q$-unbounded.  In this case $R_{\inff, \calS_G}$ is first-order definable over $K_{\inff}$.
\end{theorem}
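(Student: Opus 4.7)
The plan is to re-use the formula of Corollary \ref{diffversion} (first item) --- the one defining $O_{K_{\inff},\calW_{K_{\inff}}}$ under a hereditary $q$-boundedness hypothesis --- and to show that, once that hypothesis is dropped, the \emph{same} formula defines $R_{\inff,\calS_G}$. Because the primes of $\calW_G=\calS_G\cup\calQ_G$ are completely $q$-bounded, Proposition \ref{onlyq} combined with Lemma \ref{integral} makes $R_{K_{\inff},\calW_{\inff}}$ first-order definable, which is exactly what is needed to express the side conditions $c\in\Theta_q\cap\Phi_q$ (via $(c-1)/w$ lying in $R_{K_{\inff},\calW_{\inff}}$); Lemma \ref{always non-negative} handles $\Omega_q(K_{\inff})$; and Theorem \ref{thm:maindef} rewrites the norm equation as a polynomial system over $K_{\inff}$.

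For soundness, suppose $x\notin R_{\inff,\calS_G}$, so that $x$ has a pole at some prime $\pp_{G(x)}$ which is $q$-bounded and lies outside $\calW$. The proof of Corollary \ref{def1} transplants verbatim: Lemma \ref{bounded path} provides a $q$-bounding field $M\in I_{G(x)}$ for $\pp_{G(x)}$; by the Strong Approximation Theorem I pick $c\in\Theta_q(M,\calS_M)\cap\Phi_q(M)\cap\Omega_q(M)$ which is not a $q$-th power modulo some $\pp_M$ along the $q$-bounded path, together with $b\in M$ satisfying $\ord_{\pp_M}b=-1$; Proposition \ref{prop:norm} then prohibits the existence of a solution to the norm equation in $L_{\inff}(\sqrt[q]{c})$.

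The new content is the completeness direction. Given $x\in R_{\inff,\calS_G}$ and admissible $b,c$, I will solve the norm equation in $L_K(\sqrt[q]{c})$ for a sufficiently large $K\in I_{G(x,b,c)}$ via the Hasse Norm Principle. Propositions \ref{prop:fixorder} and \ref{prop:badprime} supply local solvability at every prime outside $\calW$ not above a pole of $x$; at $\calW$-factors the choice of $c$ combined with Lemma \ref{notq} splits the extension completely, trivializing local solvability there. The remaining primes lie above the finitely many poles of $x$, all of which are by hypothesis $q$-unbounded, so I enlarge $K$ to ensure $q\mid d(\pp_K/\pp_{K_0})$ at every factor above each such pole, where $K_0$ is the initial field containing $x,b,c$. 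At such a factor $\pp_{L_K}$ there is a dichotomy. If $q\mid f(\pp_K/\pp_{K_0})$, then since $\xi_q\in G$ reduces to a primitive $q$-th root of unity in the residue field at $\pp_{K_0}$ (a prime not dividing $q$), one has $q\mid N-1$ for the residue field order $N$, and the congruence $N^{f-1}+\dots+1\equiv f\equiv 0\pmod q$ makes $c$ a $q$-th power in the residue field at $\pp_{L_K}$; Hensel's lemma then lifts this to a $q$-th power in the completion, so the local Kummer extension is trivial and the norm is surjective. If $q\mid e(\pp_K/\pp_{K_0})$ instead, then $\ord_{\pp_{L_K}}(b)=e(\pp_K/\pp_{K_0})\cdot\ord_{\pp_{L_{K_0}}}(b)$ (using, from the construction in Proposition \ref{prop:badprime}, that $\pp_K$ splits completely into $L_K$ at poles of $x$) is divisible by $q$, whence $\ord_{\pp_{L_K}}(bx^q+b^q)\equiv 0\pmod q$; and the Kummer extension is unramified at $\pp_{L_K}$ by the choice of $c$, so $bx^q+b^q$ is a local norm by unramified local class field theory. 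The Hasse Norm Principle glues these local solutions into a global one.

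The main obstacle is this dichotomy at $q$-unbounded poles: $q$-unboundedness delivers only $q\mid ef$, not $q\mid f$ alone, so the argument must handle the ramification contribution separately. The twin observations --- that $q\mid f$ collapses the local Kummer extension via Hensel, while $q\mid e$ promotes $\ord(b)$ into $q\Z$ so that the norm group in an unramified cyclic degree-$q$ extension accommodates $bx^q+b^q$ --- together cover both possibilities and let the Hasse Norm Principle deliver the gluing, pinning down $R_{\inff,\calS_G}$ as the first-order set defined by the formula.
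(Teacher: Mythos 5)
Your proposal follows the paper's own proof quite closely: re-use the norm-equation formula, observe that the soundness (negative) direction transplants verbatim from Corollary \ref{def1} once the offending pole is merely $q$-bounded rather than hereditarily $q$-bounded, and for the completeness (positive) direction enlarge $K$ at the $q$-unbounded poles of $x$ so that $q\mid ef$ at every factor, then split into the cases $q\mid e$ and $q\mid f$. That is exactly the shape of the paper's argument. Two of the intermediate steps, however, are not airtight as you have written them.

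In the $q\mid e$ branch you derive $\ord_{\pp_{L_K}}(bx^q+b^q)\equiv 0\pmod q$ from $\ord_{\pp_{L_K}}(b)\equiv 0\pmod q$. That implication is not valid: when $\ord(bx^q)=\ord(b^q)$ there can be cancellation and the order of the sum need not be a multiple of $q$ even though both summands have orders divisible by $q$. The correct and simpler observation is that $bx^q+b^q$ already lies in $K_0$, so $\ord_{\pp_K}(bx^q+b^q)=e(\pp_K/\pp_{K_0})\,\ord_{\pp_{K_0}}(bx^q+b^q)$ is divisible by $q$ automatically; no detour through $\ord(b)$ is needed (and the same observation gives $q\mid\ord_{\pp_K}(c)$, so $\pp_K$ is unramified in $L_K$ and in $L_K(\sqrt[q]{c})$).

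In the $q\mid f$ branch your Hensel argument implicitly assumes $\ord_{\pp_{L_K}}(c)=0$: the statement ``$c$ is a $q$-th power in the residue field'' has no content if $c$ has a zero or pole there. At a pole of $x$ Proposition \ref{prop:fixorder} does \emph{not} apply, so the construction of $L$ does not force $\ord(c)$ to be a multiple of $q$ at such a prime; when $q\nmid e$ and $q\nmid\ord_{\pp_{K_0}}(c)$ the extension $L_{K}(\sqrt[q]{c})/L_K$ actually ramifies at $\pp_{L_K}$ and the ``split completely'' conclusion fails. (The paper's own one-sentence dichotomy has the same gap.) The cleanest way to repair both branches at once is the projection formula for the $q$-th Hilbert symbol: since $c$ and $bx^q+b^q$ both lie in $K_0$, one has
\[
(c,\ bx^q+b^q)_{q,\,\pp_{L_K}}=(c,\ bx^q+b^q)_{q,\,\pp_{K_0}}^{\;[L_{K,\pp_{L_K}}:K_{0,\pp_{K_0}}]},
\]
and once $K$ is enlarged so that $q\mid e(\pp_K/\pp_{K_0})f(\pp_K/\pp_{K_0})$ the exponent on the right is a multiple of $q$, killing the $q$-th root of unity on the left. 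Thus $bx^q+b^q$ is a local norm from $L_{K,\pp_{L_K}}(\sqrt[q]{c})$ regardless of whether that local extension is split, unramified, or ramified, and the Hasse Norm Principle then closes the argument without any case distinction.
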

\begin{proof}
It is enough to consider what happens to the solvability of the norm equation below for $c$ chosen so that factors of $q$ and primes in $\calS_G$ split and $x$ has poles only at the primes described in the statement of the theorem. So let $K \in I_G$ and consider
\begin{equation}
\label{old}
 {\mathbf N}_{L_K(\sqrt[q]{c})/L_K}(y) = bx^q+b^q.
\end{equation}
 As above, since factors of $q$ and primes in $\calS_G$ split, this equation will be solvable locally at these primes.  Now as far as unbounded primes are concerned, we can always consider the norm equation over a field $K$ large enough so that  factors of the unbounded primes occurring with a non-zero order in the divisor of the right-side of \eqref{old}   either ramify with ramification degree divisible by $q$ or their relative degree goes up by a factor divisible by $q$.  Over this $K$, either these factors split completely when we adjoin the $q$-th root of $c$ or the right side of \eqref{old} has order divisible by $q$ at the factors of these $q$-unbounded primes.  Thus, in any case of large enough $K$, the norm equation is solvable at all the factors of unbounded primes.
\end{proof}
One can prove a few more variations of such results.  The theorem below is another example.  Its proof is completely analogous to the proofs above.
\begin{theorem}
\label{general}
Let $P=\{p_1,\ldots,p_k\}$ be a finite set of rational primes such that each prime of $G$ not dividing any element of $P$,  is heredeterily $p_i$-bounded in $K_{\inff}$ with respect to some $p_i$, and each $p_i$ is completely $p_j$-bounded in $K_{\inff}$ for some $p_j$.  In this case $O_{K_{\inff}}$ is first-order definable over $K_{\inff}$.
\end{theorem}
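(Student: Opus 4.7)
The strategy is a divide-and-conquer reduction: for each prime $p_i \in P$ I use the norm equation construction of Corollary \ref{def1} with $q = p_i$ to kill poles at $G$-primes that are hereditarily $p_i$-bounded and not factors of $p_i$, and separately handle the (finite in $G$) set of factors of each $p_i$ using its complete $p_{j(i)}$-boundedness. The ring of integers is then obtained as the intersection of all these sets.

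More precisely, as in the proof of Theorem \ref{thm:maindef} I would first pass to $G(\xi_{p_1},\ldots,\xi_{p_k})$ and afterwards rewrite the relevant norm equations as polynomial equations over $K_{\inff}$ to remove the roots-of-unity assumption. Then, for each $i \in \{1,\ldots,k\}$, Corollary \ref{def1} applied with $q = p_i$ and $\calS = \emptyset$ produces a first-order definition of a set $B_i \subset K_{\inff}$ whose elements have no poles at any $G$-prime that is simultaneously hereditarily $p_i$-bounded and coprime to $p_i$. The hereditary hypothesis is exactly what is needed so that, at every descendant of such a pole in $K_{\inff}$, a witness $(b,c)$ defeating the norm equation survives, just as in the proof of Corollary \ref{def1}.

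To handle poles at factors of the $p_i$ themselves, for each $i$ I introduce a set $T_i \subset K_{\inff}$ of elements integral at every factor of $p_i$. When $j(i) \neq i$, Proposition \ref{finitely many for p not q} applied with auxiliary prime $p_{j(i)}$ and with $d, a$ chosen so that $\calW$ consists of the factors of $p_i$ inside a common completely $p_{j(i)}$-bounding field, combined with the closure-under-multiplication trick of Lemma \ref{integral}, yields a first-order definition of $T_i$. When $j(i) = i$, that auxiliary-prime approach is unavailable and I would instead use the cyclic-extension machinery of Lemma \ref{le:qextensions}, Lemma \ref{le:notsplit}, Lemma \ref{makereal} and Proposition \ref{onlyq}, again upgraded to $T_i$ via Lemma \ref{integral}.

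Setting $O_{K_{\inff}} = \bigl(\bigcap_{i=1}^k B_i\bigr) \cap \bigl(\bigcap_{i=1}^k T_i\bigr)$, any $G$-prime $\pp$ is either a factor of some $p_i$ (and so excluded as a pole by $T_i$) or a factor of none of the $p_i$, in which case the hypothesis provides an $i_0$ for which $\pp$ is hereditarily $p_{i_0}$-bounded (and so excluded by $B_{i_0}$). The main obstacle I anticipate is the bookkeeping in the case $j(i) = i$: the construction leading to Proposition \ref{onlyq} requires passing to the field $E_{\inff}$ and choosing a generator $\beta$, and doing this uniformly in $i$ while maintaining the simultaneous archimedean conditions $c \in \Omega_{p_i}(K_{\inff})$ for each $i$ is a Strong Approximation exercise that has to be executed carefully, though in spirit it is the same as the single-prime arguments already developed in Section \ref{inf ext}.
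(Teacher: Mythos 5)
Your approach is correct and matches what the paper's one-line proof (``completely analogous to the proofs above'') is pointing at: allocate each $G$-prime coprime to $P$ to the norm-equation test for whichever $p_i$ hereditarily bounds it, handle the factors of each $p_i$ via its auxiliary $p_{j(i)}$, and intersect. Two execution notes. The $B_i$ formula taken from Corollary \ref{def1} quantifies $c$ over $\Phi_{p_i}(K_{\inff})\cap\Omega_{p_i}(K_{\inff})$, and $\Phi_{p_i}(K_{\inff})$ is not a priori first-order; so you must build the $T_i$'s first and then, exactly as in Corollary \ref{diffversion}, replace $c\in\Phi_{p_i}(K_{\inff})$ by $(c-1)/\hat w_i\in T_i$ for a fixed $\hat w_i\in G$ with the appropriate order at each factor of $p_i$, which is what makes the $B_i$ formulas genuinely first-order. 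Also, the ``main obstacle'' you anticipate is not actually there: $\Omega_q(K_{\inff})=K_{\inff}$ unless $q=2$, at most one $p_i$ equals $2$, and each $T_i$ and $B_i$ is built from its own independently chosen auxiliary data $a_i, d_i, \beta_i$, so no uniformity-in-$i$ or simultaneous Strong Approximation is ever needed beyond the $k$ separate single-prime constructions already carried out in Section \ref{inf ext}.
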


\section{Examples of Infinite Extensions of $\Q$ where the Ring of Integers is First-Order Definable}
\label{examples}
In this section we describe some example to which our methods apply.  Some of these examples will be pretty straightforward while others are more esoteric.  We start with the more straightforward examples.
\begin{example}[Fields with Uniformly Bounded Local Degrees]
Perhaps the simplest example of a $q$-bounded infinite extension of rationals is an infinite extension where the local degrees of all primes are uniformly bounded.  In such a field every prime is completely $q$-bounded for any prime $q$.  An example of such an extension is an infinite Galois extension generated by all extensions of degree $p$ (for a fixed prime $p$) of $\Q$ contained in cyclotomics. More examples of such fields can be found in \cite{Checcoli}.    Most of such examples where the field is Galois over $\Q$ were already covered by definability results of Videla with respect to the ring of integers.  However, one can construct many non-Galois examples of such fields.   It is enough to take a collection $\{K_i\}$ of number fields which are Galois but not abelian over $\Q$, linearly disjoint over $\Q$, of degree less or equal to some fixed $n$ over $\Q$, and consider a collection of number fields $\{N_i\}$, where $N_i \subset K_i$ and $N_i$ is not Galois over $\Q$.  Now let $N_{\inff}$ be the compositum of all $N_i$ inside $\tilde \Q$.  If $K_{\inff}$ is the compositum of all $K_i$ inside $\tilde \Q$, then $N_{\inff} \subset K_{\inff}$ and $[K_{\inff}:N_{\inff}]=\infty$.   Thus, while Videla's results give us a first-order definition of $O_{K_{\inff}}$ over $K_{\inff}$, they do not give us a first-order definition of $O_{N_{\inff}}$ over $N_{\inff}$, obtainable by our methods.  
\end{example}

\begin{example}[Galois extensions without cyclic  subextensions of degree divisible by arbitrarily high powers of  $q$]
\label{Galois}
If $K_{\inff}$ is a Galois extension of a number field $G$ such that for any Galois $K \in I_G$, we have that $[K:G] \not \equiv 0 \mod q$, then $O_{K_{\inff}}$ and the integral closure of any ring of $\calS$-integers in $K_{\inff}$ is first-order definable over $K_{\inff}$. 

  It is not hard to see that in this case ramification and relative degrees in all finite subextensions are prime to $q$ and thus all the primes are completely $q$-bounded.  This example  covers cyclotomic extensions with finitely many ramified primes, i.e. extensions of the form $\Q(\xi_{p^{\ell}_1}, \ldots, \xi_{p^{\ell}_k}, \ell \in \Z_{>0})$, where $p_1,\ldots,p_k$ are rational primes, and all their subfields that include all abelian extensions with finitely many ramified primes.  (The definability of rings of integers in these extensions follows from Videla's results.)

 Given a prime $q$, and an integer $m>0$,  our method also applies to the case of  a cyclotomic extension (and any of it subfields) generated by the set  
\[
\{\xi_{p^{\ell}}| \ell \in \Z_{>0}, p \not=q \mbox{ is any prime such that } q^{m +1}\not | (p-1)\}.
\]
  (In other words we need to omit primes occurring in the arithmetic sequence $kq^{m+1} +1, k \in \Z_{>0}$, and by increasing $m$, we can make the density of the omitted primes arbitrary small.)   This example generalizes an example of Fukuzaki where he defined integers over the field $\Q(\{\cos(2\pi/\ell^n ) : \ell\in \Delta, n \in \Z_{>0}\})$  and any of its Galois subextensions, and where $\Delta$ is the set of all the prime integers which are congruent to −1 modulo 4.

  On top of such a  cyclotomic field we can also add a field generated by any subset of $p$-th roots of algebraic numbers contained in this cyclotomic field, with $p$ as above not equivalent to 1 modulo $q^{m+1}$.  Clearly, many more examples of Galois extensions of this sort can be generated.
   
\end{example}
As we pointed above, being Galois is not required for our method to work.  Thus we have some obvious examples of non-Galois extensions where we can define integers.
\begin{example}[Extensions that are not necessarily Galois]
If $K_{\inff}$ is a tower of finite extensions of degree less than some positive integer $m$,   then $O_{K_{\inff}}$ and the integral closure of any ring of $\calS$-integers in $K_{\inff}$ are first-order definable over $K_{\inff}$.  Observe that a field of this sort can have primes of arbitrarily large or infinite local degree, and thus this example is a non-trivial generalization of the first example.

If the extension is Galois, we are looking at a field discussed in the second example.  So the new cases will come from extensions that are not Galois.  Observe, that in such a field for any $q >m$ all the primes are completely $q$-bounded.
\end{example}
It is more difficult to describe examples where primes are not necessarily completely $q$-bounded.
\begin{example}[Less natural fields]
Let $q$ be a rational prime and let $\{p_1,\ldots\}$ be a listing of all rational primes omitting $q$.  Let $\pi_{i}=\prod_{j=1}^ip_j$.  Let $G$ be any number field and let $\{\pp_1,\ldots\}$ be a listing of all primes of $G$ not lying above $q$.  We construct a tower of fields starting with $G$ where  all factors of  $q$ are completely $q$-bounded, all the other primes of $G$ and any finite extension of $G$ are $q$-bounded but not completely $q$- bounded and are $p$-unbounded for any other prime $p$. Let $K_0=G$ and assume we have constructed $K_1,\ldots,K_n$ for some $n\geq 0$.  We now construct $K_{n+1}$ in three steps.  

First we construct an extension $M_{n,1}$ of $K_n$ of degree  $\pi_n$, where all the primes above $\pp_1,\ldots,\pp_n$ will have ramification degrees divisible by $\pi_n$ and all the primes above $q$ split completely. (Such an extension always exist.  For example take an element $a$ of $O_{K_n}$ such that $\ord_{\pp_i}a=1$ for $i =1,\ldots,n$ and $a \equiv 1$ mod $q$ and adjoin $\sqrt[\pi_i]{a}$ to $K_n$.)  This step insures that the ramification degree of factors of any prime of $G$ not dividing $q$ will eventually be divisible by arbitrarily high powers of rational primes distinct from $q$. 

We now construct a non-trivial extension $M_{n,2}$ of $M_{n,1}$ where all the factors of $\pp_1, \ldots, \pp_n$ and $q$ in $M_{n,1}$ split completely into distinct factors.    (For example we can adjoin $\sqrt[p]{b}$, where $p$ is prime to $\pp_1,\ldots, \pp_n$ and $q$ and $b \equiv 1 \mod (q\pp_1\ldots \pp_n)$.)  This step allows us to produce $q$-bounded and $q$-unbounded paths above every prime.

Finally $K_{n+1}$ is an extension of  $M_{n,2}$ of degree $q$ satisfying the following requirements:
\be
\item  All the factors of $q$ split completely.
\item For each $i=1,\ldots,n$ and each $\ttt_i$ that is a factor of some $\pp_i$ in $M_{n,1}$, if  $\ttt_{i,1},\ldots,\ttt_{i,k}$ are factors of $\ttt_i$ in $M_{n,2}$ under some ordering, then $\ttt_{i,1}$ splits completely into distinct factors and $\ttt_{i,2},\ldots, \ttt_{i,k}$ do not split in the extension $K_{n+1}/M_{n,2}$.
\ee
To construct such an extension, by Lemma \ref{notq}, we can take a $q$-th root of an algebraic integer of $M_{n,2}$ such that it is equivalent to $1 \mod q^{3}$ and modulo $\ttt_{i,1}$, and to a non-$q$-th power modulo $\ttt_{i,j}, j\geq 2$.  In this step we construct the next level of $q$-bounded and $q$-unbounded paths.  At the ``end'' of the construction every prime of any $K_{n}$ not dividing $q$,  will lie along the ``left-most'' $q$-bounded path, and the ``right-most'' $q$-unbounded path. (In fact, every prime not dividing $q$ will lie along infinitely many $q$-bounded and $q$-unbounded paths.)

We now let $K_{\inff} = \bigcup_{i=1}^{\infty} K_i$. 

 It is easy to see that for all $K \in I_G$ every factor of $q$ is unramified and of relative degree 1.  At the same time, for any $p\not =q$, any positive integer $m$,  and any $\pp_i$ prime to $q$, there is a field $K \in I_G$ where all the factors of $\pp_i$ have a ramification degree over $\pp_i$ divisible by $p^m$.  
 
Further, for $i \in \Z_{>0}$, let $d_{i}=\max_{\pp_{K_{i+1}} \in \calC_{K_{i+1}}(\pp_i)}\{\ord_q(d(\pp_{K_{i+1}}/\pp_i))\}$, and note that for any $\pp_i$, for any $K \in I_{G} $ there exists a $K$-factor $\pp_K$ of $\pp_{i}$ such that $\ord_q(d(\pp_K/\pp_i))\leq d_{i}$, while at the same time for any  $m \in \Z_{>0}$,  there exist a field $M\in I_G$ and an $M$-factor $\pp_M$ of $\pp_i$ such that $f(\pp_M/\pp_i) \equiv 0 \mod q^m$.

\end{example}

We can also produce an example where one would need Theorem \ref{general}.  The construction is similar to the one above and, in particular, the existence of extensions we need can be justified by similar arguments.
\begin{example}[Also not very natural fields]
Let $Q=\{q_1,\ldots,q_m\}$ be a finite collection of rational primes.  Let $\{p_1, \ldots \}$ be a listing of all rational primes excluding the primes in $Q$.  Let $\pi_i=\prod_{j=1}^ip_j$.  Let $G$ be any number field and divide all the primes of $G$ not lying above any prime of $Q$ into $m$ classes with $\{\pp_{i,j}, i=1,\ldots,m, j \in \Z_{>0}\}$. We now construct a tower of fields $\{K_i\}$ with $K_{\inff}$, as above, being the union of the tower. Let $K_0=G$ and assume that $K_{n}$ for some $n \geq 0$ has been constructed.  We construct $K_{n+1}$ in $m+1$ steps.  First let $M_{0,n}/K_n$ be an extension of degree $\pi_{n+1}$ such that 
\be
\item All the primes above primes of $Q$ split completely.
\item All the primes in the set $\{\pp_{i,j}, i =1,\ldots, m, j = 1,\ldots, n+1\}$ ramify completely.
\ee
Next we construct construct $M_{i,n}/M_{i-1,n}$ for $i=1,\ldots,m$.  First of all, the degree of the extension will be $q_i$.  Secondly, all the primes above the primes of $Q$ and all the primes above the primes in the set $\{\pp_{i,j}, j=1,\ldots, n+1\}$  split completely.  Thirdly, all the primes in the set $\{\pp_{r,j}, r=1,\ldots, m, r \not = i, j=1,\ldots, n+1\}$ remain prime.  Finally, $K_{n+1}=M_{m,n}$.  

It is not hard to see that for each $i=1,\ldots, m$ the primes $\{\pp_{i,j}, j=1,\ldots, \}$ of $G$  are completely $q_i$-bounded and these primes are $p$-unbounded for any prime $p \not=q_i$.  Further, all the primes above primes of $Q$ are completely $q$-bounded for any prime $q$.  Thus we need to use Theorem \ref{general} here to get the desired definitions.

\end{example}

We should finish this section with a listing of some obvious fields which are {\bf not} $q$-bounded: the algebraic closure of $\Q$, the maximal abelian extension of $\Q$, the field of all totally real numbers, the field of real algebraic numbers. (Add here the definable result over the field.) In general examples of such fields are also not hard to generate. We remind the reader that one would expect the field of all totally real numbers not to be $q$-bounded since, as has been noted above, the first-order theory of the field of all totally real integers is decidable, while this is not the case for the ring of integers of this field.  Thus, the ring of integers of the field of all totally real integers does not have a first-order definition over it fraction field.

\section{From Undecidability of Rings to Undecidability of Fields}
\label{undecidability}
We start with reviewing results we are going to use due to L. Kronecker, J. Robinson and the author of this paper.  We start with reviewing the results of Julia Robinson from \cite{Rob3}.
\begin{theorem}[JR]
\label{Julia}
 The natural numbers can be defined arithmetically in any totally
real algebraic integer ring $R$ such that there is a smallest interval $(0, s)$, $s$
real or $\infty$, which contains infinitely many sets of conjugates of numbers of
$R$, i.e., infinitely many $x \in R$ with all the conjugates (over $\Q$) in $(0, s)$.
\end{theorem}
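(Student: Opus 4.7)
The plan is to exhibit a first-order formula $\varphi(x)$ in the language of rings such that $\{x\in R:\varphi(x)\}=\N$. Since $R\supseteq\Z$, this amounts to carving $\N$ out of $R$ via a formula that uses only the ring operations and constants built from $1$, while exploiting the abundance of conjugate sets in the interval $(0,s)$ guaranteed by the hypothesis.

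I would proceed in two moves. First, Lemma \ref{always non-negative} provides the existentially definable predicate
\[
P(y)\equiv \exists y_1,y_2,y_3,y_4\in R\,(y=y_1^2+y_2^2+y_3^2+y_4^2),
\]
which picks out exactly the elements of $R$ that are non-negative at every embedding into $\tilde\Q\cap\R$. Second, Kronecker's theorem supplies an abstract description of $\Z$ inside $R$: an element $t\in R$ lies in $\Z$ if and only if all conjugates of $t$ lie in some interval of length strictly less than $2$. Indeed, if the conjugates of $t$ all lie in $(a,b)$ with $b-a<2$, one picks $n\in\Z$ with $(a-n,b-n)\subset(-1,1)$, so that $t-n$ is an algebraic integer with all conjugates of absolute value less than one, forcing $t-n=0$ by Kronecker.

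To turn this abstract characterization into a first-order condition, I would use the ``Julia Robinson window''
\[
W:=\{y\in R:P(y)\wedge P(5-y)\},
\]
which, by the hypothesis on $s$ together with the bound $s\geq 4$ of Kronecker, is infinite; its elements are precisely those $y\in R$ whose conjugates all lie in $[0,5]$, and their conjugate sets populate this interval densely. From $W$ one constructs a formula $\varphi_\Z(n)$ expressing, in the ring language, the Kronecker criterion above: there is an integer ``offset'' (presented via a witness built from pairs of elements of $W$) making the conjugate spread of $n$ less than $2$. The quantifier pattern combines a universal run over $y\in W$ with existential witnesses $z\in W$ whose differences $z-y$ approximate the translations $n\mapsto n-m$ by each $m\in\Z$. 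Setting $\varphi(x)\equiv \varphi_\Z(x)\wedge P(x)$ then gives $\{x\in R:\varphi(x)\}=\N$.

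The main obstacle is the construction of $\varphi_\Z$: one must express ``there exists a rational integer offset making the conjugate spread of $n$ small'' without having $\Z$ at hand to quantify over it. The resolution, which is the technical heart of Julia Robinson's argument, is to bootstrap using the infinite window $W$ as a surrogate --- each pair $(y,y')\in W\times W$ provides a candidate ``translate'' $y-y'$, and the density of conjugate sets among such pairs, together with a Weak Approximation / pigeonhole argument across embeddings, forces the $n\in R$ satisfying $\varphi_\Z$ to be exactly the rational integers. For the edge case $s=\infty$ one adjusts the window size but runs the same argument. With $\N$ so defined, the first-order theory of $R$ interprets Peano arithmetic, hence is undecidable by G\"odel's theorem.
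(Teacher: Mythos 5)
The paper does not prove Theorem \ref{Julia}: it is quoted verbatim from J. Robinson's 1962 paper \cite{Rob3} (hence the ``[JR]'' tag), and the surrounding text uses it as a black box. So there is no proof in the paper to compare yours against, and what you have written has to be judged on its own merits as a reconstruction of Robinson's argument. Judged that way, it has two serious gaps.

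First, your use of the four-squares predicate $P(y)$ is not justified over the ring $R$. Lemma \ref{always non-negative} is a statement about a \emph{field} $H$: it rests on Hasse--Minkowski and the universality of a quaternary form over local fields, which require inverting denominators. Over a ring of totally real algebraic integers, a totally non-negative element need not be a sum of four squares of \emph{ring} elements (Siegel's four-square theorem for totally real fields is also a field statement; the integral version fails in general and only holds above a bound depending on the different). So the set $\{y\in R:\exists y_1,\dots,y_4\in R,\,y=\sum y_i^2\}$ is in general a proper subset of the totally non-negative elements, and your ``window'' $W$ is not the set you claim it is. This is not merely cosmetic: the whole argument leans on $W$ being exactly the totally-$[0,5]$ elements, and on there being infinitely many of them.

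Second, and more fundamentally, the step that \emph{is} the theorem --- constructing the formula $\varphi_\Z$ --- is left as a gesture. Saying one should encode ``there exists an integer offset $m$ making the conjugate spread of $n$ small'' by ``witnesses built from pairs of elements of $W$'' and a ``pigeonhole argument'' is circular as stated (it quantifies over integers before $\Z$ is defined) and does not explain how the witnesses actually enforce the Kronecker criterion. You also invoke density of conjugate sets in $(0,s)$, but the hypothesis gives only infiniteness, not density. Robinson's actual argument exploits the \emph{minimality} of $s$ in an essential way: for every $t<s$ the interval $(0,t)$ traps only finitely many conjugate sets, and this finiteness--infiniteness dichotomy is what lets her define an order relation and bootstrap to a model of arithmetic. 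Your sketch never uses minimality at all, which is a sign that the argument as outlined cannot close. To make this a proof you would need to replace the four-squares shortcut by a definition of total non-negativity valid over the ring, and then actually carry out Robinson's combinatorial construction (or an equivalent) rather than describe it.
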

J. Robinson showed in \cite{Rob3} that certain infinite towers of totally really quadratic extensions have rings of integers with $s=\infty$ and thus the first-order theory of these rings is undecidable.  C. Videla used this result in \cite{V1} to show that the archimedean hull of $\Q$ is undecidable.  Further, J. Robinson (\cite{Rob3}), C. Videla (\cite{V2}), and K. Fukuzaki  (\cite{Fuk}) make use of the following proposition which is a consequence of a result by L. Kronecker from \cite{Kron}.
\begin{proposition}[Kronecker]
\label{Kron}
The interval $(0, 4)$
contains infinitely many sets of conjugates of totally real algebraic integers
and  no sub-interval of $(0, 4)$ does.
\end{proposition}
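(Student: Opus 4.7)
The plan is to reduce both halves of the claim, via the substitution $\beta = \alpha - 2$, to statements about totally real algebraic integers $\beta$ with all Galois conjugates in $(-2, 2)$, and then to invoke Kronecker's classical theorem (every algebraic integer with all Galois conjugates on the unit circle is zero or a root of unity) to classify all such $\beta$.

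For the existence half, I would produce an explicit infinite family. Fix a primitive $n$-th root of unity $\xi_n$ and set $\alpha_n = \xi_n + \xi_n^{-1} + 2$. This is a totally real algebraic integer whose full Galois orbit over $\Q$ is
\[
\{\,2 + 2\cos(2\pi k/n) : 1 \le k \le n/2,\ \gcd(k,n)=1\,\},
\]
contained strictly inside $(0, 4)$ for every $n \ge 3$. Letting $n$ range over odd primes gives $[\Q(\alpha_n):\Q] = (n-1)/2$, which is unbounded, so infinitely many distinct conjugate-classes already sit inside $(0, 4)$.

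For the exclusion half, let $\beta$ be any totally real algebraic integer with all conjugates in $(-2, 2)$ and solve $\gamma^2 - \beta\gamma + 1 = 0$: then $\gamma$ is an algebraic integer whose Galois conjugates all lie on the unit circle, so Kronecker's theorem forces $\gamma$ to be a root of unity and hence $\beta = 2\cos(2\pi k/n)$ for some coprime pair $(k, n)$, with Galois orbit $\{\,2\cos(2\pi j/n) : \gcd(j, n) = 1\,\}$. Now assume a proper sub-interval $(a', b') \subsetneq (-2, 2)$ contains infinitely many such orbits: either $b' < 2$ or $a' > -2$. If $b' < 2$, the choice $j = 1$ (coprime to every $n \ge 2$) forces $2\cos(2\pi/n) \in (a', b')$, contradicting $2\cos(2\pi/n) \to 2$. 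If $a' > -2$, for each large $n$ in the hypothetical family I need some $j$ coprime to $n$ with $|j/n - 1/2|$ small enough that $2\cos(2\pi j/n) < a'$; this follows from the elementary estimate that in any window $|j - n/2| < \varepsilon n$ the number of integers coprime to $n$ grows like $2\varepsilon\,\varphi(n) \to \infty$ (via $\varphi(n) \gg n/\log\log n$). Translating back by $+2$ concludes the proof on $(0, 4)$. The main obstacle is precisely this second sub-case: the upper cut-off is handled uniformly by $j = 1$, but the lower cut-off demands a coprime residue close to $n/2$, which is not automatic when $n$ has many small prime factors (for instance, $n$ even outright kills $j = n/2$), and must be supplied by the density/sieve input just indicated.
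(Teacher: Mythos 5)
Your proof is correct, and since the paper simply cites Kronecker's 1857 note rather than giving an argument, there is no proof in the paper to compare against; the reduction to Kronecker's root-of-unity theorem via $\gamma^2 - \beta\gamma + 1 = 0$, together with the shift by $2$, is the classical route, and both halves go through.

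The one soft spot is the density estimate you flag yourself in the lower cut-off case. The heuristic count $2\varepsilon\varphi(n)$ for coprime residues in the window $|j-n/2|<\varepsilon n$ comes from inclusion--exclusion over $d\mid n$ and carries an error term of size $O(2^{\omega(n)})$, so one must in addition verify $2^{\omega(n)}=o(\varphi(n))$. This can all be made rigorous, but it is unnecessary: for every $n\ge 3$ there is an explicit $j$ coprime to $n$ with $|j/n-1/2|\le 2/n$. If $n$ is odd take $j=(n-1)/2$; any common divisor of $j$ and $n$ divides $2j=n-1$, hence divides $\gcd(n,n-1)=1$. If $n\equiv 2\pmod 4$ take $j=n/2-2$; then $j$ is odd, and $\gcd(j,n/2)$ divides both $2$ and the odd number $n/2$, hence is $1$, so $\gcd(j,n)=1$. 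If $4\mid n$, write $n=2^a m$ with $m$ odd and take $j=n/2+1$; then $j$ is odd and $j\equiv 1\pmod m$, so again $\gcd(j,n)=1$. In all three cases $2\cos(2\pi j/n)\to -2$ as $n\to\infty$, so the orbit escapes any interval bounded away from $-2$ with no sieve input at all. The existence half and the upper cut-off argument (via $j=1$) are exactly right as written.
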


An immediate consequence of Theorems \ref{Julia} and \ref{Kron} is that any ring of totally real integers containing a set of the form $\{\cos\frac{2\pi}{m}, m  \in \calZ \}$ with $\calZ$ infinite, one can give a first-order definition of integers.  Thus, extending results of K. Fukuzaki, we now have the following theorems.
\begin{theorem}
\label{undecidable}
Let $q$ be a rational prime, let $m>0$ be an integer and let 
\[
K_{\inff}=\Q(\cos(2\pi/n),  n =\prod_{i=1}^s p_i^{\ell_i},  p_i \not \equiv 1 \mod q^m, s, \ell_1,\ldots, \ell_s \in \Z_{>0}),
\]
 where $p_i$ range over all primes satisfying the condition $p \not \equiv 1 \mod q^m$.  In this case the first-order theory of $K_{\inff}$ is undecidable and $\Z$ is first-order definable $K_{\inff}$.
\end{theorem}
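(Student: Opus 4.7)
The plan is to combine the definability of $O_{K_{\inff}}$ (from the main results of Section \ref{inf ext}) with the Julia Robinson--Kronecker criterion for totally real integer rings.

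First I would check that $K_{\inff}$ falls under the second paragraph of Example \ref{Galois}. Observe that $K_{\inff}$ is contained in the cyclotomic extension
\[
\tilde K_{\inff} = \Q\bigl(\xi_{p^{\ell}} : \ell \in \Z_{>0},\ p \text{ prime},\ p \neq q,\ p \not\equiv 1 \!\!\!\pmod{q^m}\bigr),
\]
where the restriction $p \neq q$ is the one implicit in the statement and is needed for $q$-boundedness at the factors of $q$. For any such $p$ we have $v_q(p-1) \leq m-1$, and $\Q(\xi_{p^{\ell}})/\Q$ is ramified only at $p$, with ramification index $\phi(p^{\ell}) = p^{\ell-1}(p-1)$ of $q$-part at most $q^{m-1}$. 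For any rational prime $r \neq p$, the residue degree of $r$ in $\Q(\xi_{p^{\ell}})$ divides $\phi(p^{\ell})$, so again its $q$-part is at most $q^{m-1}$. Passing to the infinite compositum, the $q$-part of every local degree of every rational prime (including $q$ itself) remains uniformly bounded by $q^{m-1}$, so every such prime is completely $q$-bounded in $\tilde K_{\inff}$, and hence also in its subfield $K_{\inff}$. Applying Theorem \ref{thm:maindef} with $H = \Q$, $H_{\inff} = K_{\inff}$, $\calS_H = \emptyset$ and $p = q$ then yields a first-order definition of $O_{K_{\inff}}$ in $K_{\inff}$.

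Next I would run the Julia Robinson--Kronecker argument on $O_{K_{\inff}}$. Since every generator $\cos(2\pi/n)$ is real with all conjugates real, $K_{\inff}$ is totally real, so $O_{K_{\inff}}$ is a ring of totally real algebraic integers. By Dirichlet's theorem there are infinitely many primes $p \neq q$ with $p \not\equiv 1 \pmod{q^m}$, and for each such $p$ the element $\alpha_p := 2 - 2\cos(2\pi/p) \in O_{K_{\inff}}$ is a totally real algebraic integer all of whose conjugates lie in the open interval $(0,4)$. Combined with Proposition \ref{Kron}, this shows that the Julia Robinson number of $O_{K_{\inff}}$ is exactly $4$, which is finite; Theorem \ref{Julia} then supplies a first-order definition of $\N$, and therefore of $\Z$, inside $O_{K_{\inff}}$ in the language of rings.

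Finally, substituting the formula defining $O_{K_{\inff}}$ in $K_{\inff}$ into the formula defining $\Z$ in $O_{K_{\inff}}$ gives a first-order definition of $\Z$ in $K_{\inff}$, which by Gödel's theorem forces the first-order theory of $K_{\inff}$ to be undecidable. I expect the only real work to be in the first step, namely the explicit $q$-boundedness calculation for the cyclotomic tower; once that uniform bound on the $q$-part of local degrees is in place, the remaining steps are essentially a direct assembly of Theorem \ref{thm:maindef}, Proposition \ref{Kron} and Theorem \ref{Julia}.
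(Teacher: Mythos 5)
Your proposal is correct and follows essentially the same route the paper intends: combine the $q$-boundedness of $K_{\inff}$ (Example \ref{Galois} feeding Theorem \ref{thm:maindef}) with the Kronecker--Julia Robinson criterion (Proposition \ref{Kron} and Theorem \ref{Julia}) applied to the infinitely many totally real integers $2 - 2\cos(2\pi/p)$, exactly as sketched in the paragraph preceding the theorem statement. One minor imprecision: for a ramified prime $r = p_i$ the $q$-part of its local degree in a finite subfield of the compositum can reach $q^{2(m-1)}$ rather than $q^{m-1}$, since the ramification index $\phi(p_i^{\ell_i})$ and the residue degree each independently contribute a $q$-part up to $q^{m-1}$; this does not affect the conclusion, which only needs a finite uniform bound on the $q$-part (and, since the extension is abelian, such a bound does imply complete $q$-boundedness).
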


Since the ring of all totally real integers is undecidable, every ring of totally real integers is trivially contained in an undecidable ring.  However, this is not automatically clear for the fields, since the first-order theory of the field of all totally real numbers is decidable.   While we cannot show that the first-order theory of any $q$-bounded totally real field is undecidable, we can show the following.
\begin{theorem}
Any $q$-bounded totally real field is contained in a totally real field that has a first-order definition of rational integers and thus has an undecidable first-order theory. 
\end{theorem}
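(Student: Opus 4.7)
The plan is to set $L_{\inff} := K_{\inff} \cdot F_{\mathrm{JR}}$, where $F_{\mathrm{JR}} \subset \R$ is a totally real infinite abelian extension of $\Q$ chosen simultaneously (i) to contain infinitely many $\Q$-conjugate sets of totally real algebraic integers lying in $(0,4)$, and (ii) to be completely $q$-bounded at every prime, with a uniform bound on the $q$-part of local degrees. A concrete choice is a field from Example \ref{Galois} or Theorem \ref{undecidable}: fix $m\ge 1$ and let $F_{\mathrm{JR}}$ be the maximal totally real subfield of $\Q\bigl(\xi_{p^\ell}: \ell\in\Z_{>0},\ p\ne q\text{ prime},\ q^m\nmid(p-1)\bigr)$, noting that infinitely many such $p$ exist by Dirichlet's theorem. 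Since the $q$-exponent of the Galois group of every finite subextension is bounded by $q^{m-1}$, every decomposition group has $q$-part bounded by the same quantity, and hence every prime of every number field inside $F_{\mathrm{JR}}$---including every factor of $q$---is completely $q$-bounded. The compositum $L_{\inff}$ is totally real by construction and contains $K_{\inff}$.

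The main step is to verify that $L_{\inff}$ remains $q$-bounded. Given a number field $G\subset L_{\inff}$, after enlarging one writes $G=G_K\cdot G_F$ with $G_K\subset K_{\inff}$ and $G_F\subset F_{\mathrm{JR}}$; fix a $G$-prime $\pp_G$ not dividing $q$. Using Lemma \ref{bounded path} one picks a $q$-bounded path $\{K_n\}$ in $K_{\inff}/G_K$ above $\pp_{G_K}:=\pp_G\cap G_K$, and a completely $q$-bounding field $M_F\in I_{G_F}(F_{\mathrm{JR}})$ for $\pp_{G_F}:=\pp_G\cap G_F$; take any cofinal chain $\{F_n\}$ in $F_{\mathrm{JR}}$ with $F_n\supseteq M_F$, so $\{K_nF_n\}$ is cofinal in $L_{\inff}/G$. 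From the tower $G\subset K_nG_F\subset K_nF_n$ and the standard local-field compositum estimate one obtains
\[
d(\pp_{K_nF_n}/\pp_G)\ \le\ d(\pp_{K_n}/\pp_{G_K})\cdot d(\pp_{F_n}/\pp_{G_F}),
\]
so on taking $\ord_q$ and using that $\ord_q d(\pp_{F_n}/\pp_{G_F})$ is eventually constant (by complete $q$-boundedness at $\pp_{G_F}$) and $\ord_q d(\pp_{K_n}/\pp_{G_K})$ is eventually constant along the chosen $q$-bounded path, the $\ord_q$ of the left-hand side is a non-decreasing bounded sequence, hence eventually constant, making $\{\pp_{K_nF_n}\}$ a $q$-bounded path for $\pp_G$ in $L_{\inff}$. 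Repeating the same argument at factors of $q$, with a completely $q$-bounding field $M_K\subset K_{\inff}$ replacing the $q$-bounded path $\{K_n\}$, shows that $q$ is completely $q$-bounded in $L_{\inff}$; hereditary $q$-boundedness is automatic since the argument applies to an arbitrary $G$ and an arbitrary prime of $G$. Thus $L_{\inff}$ is $q$-bounded.

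With $L_{\inff}$ totally real and $q$-bounded, Theorem \ref{thm:maindef} furnishes a first-order definition of $O_{L_{\inff}}$ over $L_{\inff}$. On the other hand, $O_{L_{\inff}}\supseteq O_{F_{\mathrm{JR}}}$ contains $2+2\cos(2\pi/p^\ell)=4\cos^2(\pi/p^\ell)$ for every admissible prime power $p^\ell$, and all $\Q$-conjugates of this algebraic integer lie inside $(0,4)$; Proposition \ref{Kron} then pins the Julia Robinson number of $O_{L_{\inff}}$ at exactly $4$, so Theorem \ref{Julia} provides a first-order definition of $\N$, and hence of $\Z$, inside $O_{L_{\inff}}$. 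Composing with the definition of $O_{L_{\inff}}$ in $L_{\inff}$ yields a first-order definition of $\Z$ in $L_{\inff}$, from which undecidability of the first-order theory of $L_{\inff}$ follows. The hard part is the compositum bookkeeping in the middle paragraph: one must check that combining a $q$-bounded path in the possibly wild field $K_{\inff}$ with a completely $q$-bounded cofinal chain in $F_{\mathrm{JR}}$ really produces a $q$-bounded path in $L_{\inff}$, and similarly for factors of $q$; once this is in hand, the remaining ingredients are essentially off-the-shelf applications of Theorems \ref{thm:maindef} and \ref{Julia} together with Proposition \ref{Kron}.
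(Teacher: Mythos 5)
Your proposal is correct and follows essentially the same strategy as the paper's proof: adjoin to $K_{\inff}$ a totally real abelian field containing $\cos(2\pi/n)$ for infinitely many $n$ and having uniformly bounded $q$-parts of local degrees, verify that the compositum is still $q$-bounded, and then combine Theorem~\ref{thm:maindef} with Proposition~\ref{Kron} and Theorem~\ref{Julia}. The paper achieves this more economically by adjoining a single tower $\Q(\cos(2\pi/p^k),\ k\in\Z_{>0})$ for one fixed prime $p\ne q$, so that the only new $q$-divisibility comes from the bottom level of degree $(p-1)/2$; your larger auxiliary field $F_{\mathrm{JR}}$ is harmless, and you supply the compositum bookkeeping that the paper only gestures at (one small point worth making explicit there: the compatible choice of a $q$-bounded path above $\pp_G$ in $K_nF_n$ uses that $F_{\mathrm{JR}}/\Q$ is Galois and that one may enlarge $G_K$ to $G\cap K_{\inff}$).
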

\begin{proof}
Let $K_{\inff}$ be a $q$-bounded field and observe that $K_{\inff}(\cos(2\pi/p^k), k \in \Z_{>0})$ for some $p \not = q$ is also $q$-bounded, since we will introduce at most a finite number of subextensions of degree divisible by $q$.  (In other words, the increase in divisibility by $q$ of relative or ramification degrees can come only from adding the extension $\Q(\cos 2 \pi/p)$ of degree $(p-1)/2$ over $\Q$.) But  the ring of integers of the extended field is now undecidable and has a first-order definition of the rational integers, by the discussion above.  Thus, since the extended field is still $q$-bounded, we have that the extended field has a first-order definition of rational integers and an undecidable first-order theory.

\end{proof}
We now turn our attention to non-real fields.  In \cite{V2}, C. Videla showed that the ring of integers is definable in infinite Galois extensions of $\Q$ where the degree of every finite subextension is a product of a fixed finite set of primes.   Further, as mentioned above, in \cite{V3}, Videla proved using  a theorem 
of J. Robinson that the ring of integers of $\Q(\xi_{p^r}, r \in \Z_{>0})$ is undecidable.  Combining the two results, he also obtained the first-order undecidability of $\Q(\xi_{p^r}, r \in \Z_{>0})$.    

Below we prove the following theorem.
\begin{theorem}
\label{cyclotomics}
Rational integers are first-order definable in any abelian extension of $\Q$ with finitely many ramified primes, and therefore the first-order theory of such fields is undecidable.
\end{theorem}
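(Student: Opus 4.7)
The plan is to combine Theorem \ref{thm:maindef} with the author's previously obtained existential undecidability results for rings of $\calS$-integers in infinite algebraic extensions of $\Q$. Let $K_{\inff}$ be an abelian extension of $\Q$ with finitely many ramified rational primes $p_1, \ldots, p_k$. By the Kronecker--Weber theorem, $K_{\inff}$ is contained in the compositum of the fields $\Q(\xi_{p_i^n})$, $1 \le i \le k$, $n \in \Z_{>0}$; since $\Gal(\Q(\xi_{p_i^\infty})/\Q) \cong \Z_{p_i}^{*}$, every finite quotient has order dividing $p_i^{n_i - 1}(p_i - 1)$ for some $n_i$, so every finite Galois subextension $K$ of $K_{\inff}/\Q$ has degree $[K:\Q]$ whose prime factors lie in the finite set
\[
A = \{p_1, \ldots, p_k\} \cup \bigcup_{i=1}^{k}\{\ell \text{ prime} : \ell \mid p_i - 1\}.
\]
Pick any rational prime $q \notin A$.

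For such $q$, every finite Galois subextension $K/\Q$ of $K_{\inff}/\Q$ has degree coprime to $q$, hence every local degree $d(\pp_K/p)$ over $\Q$ is coprime to $q$; this extends to non-Galois intermediate fields by passing to the Galois closure inside $K_{\inff}$. Consequently, for every tower $G \subseteq K \subseteq K_{\inff}$ of number fields and every pair $\pp_G \in \calC_G(p)$, $\pp_K \in \calC_K(\pp_G)$, the local degree $d(\pp_K/\pp_G) = d(\pp_K/p)/d(\pp_G/p)$ is coprime to $q$. Thus every rational prime --- in particular every factor of $q$ and every member of any finite set $\calS$ of rational primes --- is completely $q$-bounded in $K_{\inff}$ (take the completely $q$-bounding field to be $G$ itself). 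Hence $K_{\inff}$ is $q$-bounded in the sense of Definition \ref{unbounded}, and Theorem \ref{thm:maindef} (applied with $p = q$) yields a first-order definition of the integral closure $O_{K_{\inff}, \calS}$ of $O_{\Q, \calS}$ in $K_{\inff}$ for every finite $\calS$.

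To finish, I would apply the author's prior existential undecidability results, referenced in the overview of the paper, which for a suitably chosen finite $\calS$ produce an existential definition of $\Z$ in $O_{K_{\inff}, \calS}$. Composing this with the first-order definition of $O_{K_{\inff}, \calS}$ over $K_{\inff}$ gives a first-order definition of $\Z$ over $K_{\inff}$, and the undecidability of the first-order theory of $K_{\inff}$ follows from that of the first-order theory of $\Z$. The main technical obstacle I foresee is not the $q$-boundedness argument --- which is straightforward once $q$ has been selected outside $A$ --- but rather confirming that the splitting or ramification hypotheses imposed by the prior existential undecidability theorem on primes in $\calS$ are compatible with our setting. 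However, because $K_{\inff}$ is $q$-bounded at \emph{every} prime simultaneously, there is complete freedom in the choice of $\calS$, so compatibility should reduce to the standard hypotheses already verified in the author's earlier work.
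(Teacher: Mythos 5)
Your proposal is correct and follows essentially the same route as the paper: Kronecker--Weber places $K_{\inff}$ inside a cyclotomic extension with finitely many ramified primes, one selects $q$ coprime to the finite set of primes that can divide degrees of finite subextensions, concludes every rational prime is completely $q$-bounded, invokes Theorem \ref{thm:maindef} to define $O_{K_{\inff},\calS}$, and then cites the author's earlier existential definability of $\Z$ in such rings (Theorem \ref{exist}). Your worry in the final paragraph is unfounded: the cited result (from \cite{Sh36}, \cite{Sh17}) applies to \emph{any} nonempty finite $\calS_A$ in any abelian extension of $\Q$ with finitely many ramified primes and imposes no further hypotheses on $\calS_A$, so no compatibility check is needed.
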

  Rather than relying on the result of J. Robinson, we use existential definability and undecidability results from \cite{Sh36} and \cite{Sh17}, where the following result was proven.
\begin{theorem}
\label{exist}
 Let $A_{\inff}$ be an abelian (possibly infinite) extension of $\Q$ with finitely many ramified
primes. Then for any number field $A \subseteq A_{\inff}$ and any finite non-empty set $\calS_A$ of its primes, we have that
$\Z$ is existentially definable in the integral closure of $O_{A,\calS_A}$ in $A_{\inff}$.
\end{theorem}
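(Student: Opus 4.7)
The plan is to produce an \emph{existential} definition of $\Z$ inside the ring $R$ equal to the integral closure of $O_{A,\calS_A}$ in $A_{\inff}$, by chaining two ingredients: an existential norm-equation construction that carves $R$ out of $A_{\inff}$, and an elliptic-curve construction that picks out $\Z$ inside $R$. The hypotheses (abelian $A_{\inff}/\Q$, finitely many ramified primes, $\calS_A$ finite and non-empty) are exactly what is needed to collapse the universal quantifiers of the norm-equation machinery of Section \ref{Algebraic} and to guarantee a suitably well-behaved elliptic curve.

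The first step is to adapt Section \ref{Algebraic} to an existential form. By Kronecker-Weber the abelian field $A_{\inff}$ lies inside the compositum of the fields $\Q(\xi_{p^{\infty}})$ over the finitely many ramified primes $p$; therefore for every rational prime $q$ and every unramified $\Q$-prime $\ell$, the decomposition group of $\ell$ in $\Gal(A_{\inff}/\Q)$ is procyclic and the local extensions at $A_{\inff}$-primes above $\ell$ are cyclic and unramified. One can then fix once and for all a small finite list of auxiliary parameters $(b_i,c_i) \in A$, depending only on $q$, on $\calS_A$, and on the finite ramified set, so that solvability of the associated system of norm equations
\[
{\mathbf N}_{L_i(\sqrt[q]{c_i})/L_i}(y_i)=b_ix^q+b_i^q,\qquad i=1,\dots,k,
\]
forces $x$ to have non-negative order at every $A_{\inff}$-prime outside $\calS_{A_{\inff}}$. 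The non-emptiness of $\calS_A$ is used to produce such parameters via the Strong Approximation Theorem and to absorb the finitely many ramified primes. Combining with an existential definition of integrality at the (finitely many) ramified primes in the style of Theorem \ref{finite}, this yields an existential definition of $R$ in $A_{\inff}$.

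The second step is to define $\Z$ existentially inside $R$. Choose an elliptic curve $E/\Q$ with complex multiplication by an imaginary quadratic field $K$ such that $K\not\subseteq A_{\inff}$ (available because $A_{\inff}$ has only finitely many ramified primes, so one may take $K$ ramified at a prime outside this set). A theorem of Ribet forces $E(A_{\inff})$ to be finitely generated. Arranging further that $E$ has a point $P$ of infinite order defined over a number subfield $A'\subseteq A_{\inff}$, one writes down an existential predicate on $z\in R$ asserting the existence of $n$-th and $nz$-th multiples of $P$ whose $x$-coordinates satisfy the divisibility relations of Section \ref{elliptic curves} (in the spirit of Lemmas \ref{le:anydivisor} and \ref{le:equiv}). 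These relations pin down $z$ to a positive rational integer; zero and negatives are appended separately.

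The main obstacle is the quantifier-complexity drop from first-order to purely existential. The machinery of Section \ref{Algebraic} universally quantifies over $b$ and $c$ in order to catch every bad prime individually; here one must exhibit a single \emph{finite} list of $(b_i,c_i)$ that simultaneously handles all primes outside $\calS_{A_{\inff}}$, and this is precisely what abelianness plus finite ramification makes possible through the uniform cyclic-unramified local structure. A secondary difficulty is choosing a CM curve $E/\Q$ with both $K\not\subseteq A_{\inff}$ and positive rank over some subfield of $A_{\inff}$; this is handled by first selecting $K$ using the finiteness of ramified primes and then producing a non-torsion point by explicit descent on a suitable $E/\Q$ of known positive rank.
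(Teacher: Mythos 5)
There is a genuine gap, and it starts with a misreading of the statement. The theorem asserts that $\Z$ is existentially definable \emph{in} the ring $R$ = integral closure of $O_{A,\calS_A}$ in $A_{\inff}$ — that ring is the ambient structure, and nothing needs to be defined inside the field $A_{\inff}$. Your Step 1, an existential definition of $R$ inside $A_{\inff}$ via a \emph{finite} list of norm equations, is both unnecessary and unsound: in Proposition \ref{prop:badprime} the parameter $b$ must satisfy $\ord_{\pp}b\not\equiv 0 \bmod q$ at the very prime $\pp$ one wants to exclude as a pole, and any fixed $b$ satisfies this at only finitely many primes. Hence no finite list $(b_i,c_i)$ can block the infinitely many primes outside $\calS_{A_{\inff}}$; this is precisely why every definition in Section \ref{inf ext} carries universal quantifiers over $b$ and $c$, and abelianness does nothing to remove that obstruction. (An existential definition of a ring of integers inside its fraction field is not known even for $\Q$.) Step 2 is also too thin to stand on its own: Ribet's theorem controls \emph{torsion} over cyclotomic fields, not finite generation of the Mordell--Weil group, for which one needs Rohrlich/Kato-type input as in \cite{Rohrlich}; and even granting a finitely generated curve, the mechanism of Lemmas \ref{le:anydivisor} and \ref{le:equiv} is deployed in Section \ref{elliptic curves} inside a $\forall\exists$ formula, so converting it to a purely existential definition of $\Z$ over a ring of $\calS$-integers requires the separate Denef--Poonen bounding arguments, which you do not supply.

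For comparison: the paper does not prove this theorem at all. It is imported verbatim from \cite{Sh17} and \cite{Sh36}, where it is established by Pell-/norm-equation techniques over totally real fields and their degree-two extensions — every abelian extension of $\Q$ being of one of these two types — and the present paper only uses the statement as a black box to deduce Theorem \ref{cyclotomics}. So the expected ``proof'' here is a citation; a from-scratch argument would have to reproduce the substantial existential-definability machinery of those earlier papers, which your sketch does not.
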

Now Theorem \ref{cyclotomics} follows from the fact that any abelian extension with finitely many ramified primes must by L. Kronecker's Theorem be a subfield of a cyclotomic extension with finitely many ramified primes, i.e. an extension where prime divisors of the degrees of all finite subextensions come from a  finite set of primes.  Such an extension is $q$-bounded for any odd $q$ not occurring in the above mentioned finite set of primes, by Example \ref{Galois}.  Further, all the primes of $\Q$ are completely $q$-bounded for such a $q$.  Thus, any small subring is first order definable over any abelian extension of $\Q$ with finitely many ramified primes, and therefore by Theorem \ref{exist} we conclude that rational integers are first-order definable over any abelian extension of $\Q$ with finitely many ramified primes.  Since the set of non-zero integers is definable over any ring of algebraic integers, we can ``simulate'' the field over the ring of integers, and therefore obtain the following corollary:
\begin{corollary}
\label{cor:int}
Rational integers are first-order definable in the  ring of integers of any abelian extension of $\Q$ with finitely many ramified primes, and therefore the first-order theory of such a ring is undecidable.
\end{corollary}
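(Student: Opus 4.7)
The plan is to derive Corollary~\ref{cor:int} from Theorem~\ref{cyclotomics} by a standard interpretation of the field $A_{\inff}$ inside its ring of integers. Let $A_{\inff}$ denote an abelian extension of $\Q$ with finitely many ramified primes, set $R = O_{A_{\inff}}$, and note that $R$ is an integral domain whose fraction field is exactly $A_{\inff}$, so that every $y \in A_{\inff}$ can be written as $a/b$ with $a,b \in R$ and $b \neq 0$.

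First, Theorem~\ref{cyclotomics} supplies a first-order formula $\phi(x)$ in the language of rings such that, interpreted over $A_{\inff}$, the set $\{x \in A_{\inff} : A_{\inff} \models \phi(x)\}$ is precisely $\Z$. The next step is to translate $\phi$ into an equivalent formula $\psi(x)$ interpreted over $R$. I would proceed recursively on the structure of $\phi$: replace every subformula $\exists y\, \Phi(y)$ by $\exists a\, \exists b\, (b \neq 0 \land \Phi^{\ast}(a,b))$ and every $\forall y\, \Phi(y)$ by $\forall a\, \forall b\, ((b \neq 0) \to \Phi^{\ast}(a,b))$, where $\Phi^\ast(a,b)$ is obtained from $\Phi(y)$ by substituting the formal quotient $a/b$ for $y$. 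Atomic polynomial relations $P(y_1,\ldots,y_n)=0$ with $y_i = a_i/b_i$ are then converted into honest ring equations $\tilde P(a_1,b_1,\ldots,a_n,b_n) = 0$ by multiplying through by a sufficiently high monomial in the $b_i$'s; the equivalence with the original relation is valid precisely when each $b_i \neq 0$, which is already enforced by the quantifier guards. Since $\Z \subset R$ and the predicate ``$b \neq 0$'' is atomic over $R$, the resulting $\psi(x)$ is a formula in the first-order language of rings that defines $\Z$ inside $R$.

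Undecidability of the first-order theory of $R$ then follows at once: any decision procedure for $\mathrm{Th}(R)$, combined with $\psi(x)$ (and with the standard ring formulas for addition and multiplication on $\Z$, inherited from $R$), would yield a decision procedure for $\mathrm{Th}(\Z,+,\cdot)$, contradicting G\"odel's incompleteness theorem. There is no substantial number-theoretic obstacle in this argument; the only point that requires care is that the translation of universal quantifiers must use an implication rather than a conjunction guard, so that the translated sentence remains faithful to $\phi$ when $b = 0$ is encountered. In particular, no new definability or valuation-theoretic input is needed beyond what is already contained in Theorem~\ref{cyclotomics}, and the translation is uniformly effective in $\phi$, so no properties of $A_{\inff}$ beyond being the fraction field of $R$ are used in this final step.
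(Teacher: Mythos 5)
Your proof is correct and takes essentially the same route as the paper: both interpret the fraction field $A_{\inff}$ inside the ring $R = O_{A_{\inff}}$ (pairs $(a,b)$ with the atomic guard $b\neq 0$) and transport the Theorem~\ref{cyclotomics} definition of $\Z$ across this interpretation to get a definition of $\Z$ in $R$. The paper compresses this to the single remark that one can ``simulate'' the field over the ring of integers, whereas you spell out the syntactic translation of quantifiers and atomic formulas, but there is no substantive difference in the argument.
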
  
 \section{Using Elliptic Curves with Finitely Generated Groups}
 \label{elliptic curves}
 \setcounter{equation}{0}
 In this section we show that over the fields with finitely generated elliptic curves, assuming there exists at least one completely $q$-bounded prime, we can define $\Z$ and conclude that the first-order theory is undecidable.  
 
 The use of elliptic curves to investigate definability and decidability has a long history.  Perhaps the first mention of elliptic curves in the context of the first-order definability belongs to R. Robinson in \cite{RRobinson} and in the context of existential definability  to J. Denef in \cite{Den3}.  Using elliptic curves B. Poonen has shown in \cite{Po} that if  for a number field extension $M/K$ we have an elliptic curve $E$ defined over $K$, of rank one over $K$, such that
the rank of $E$ over $M$ is also one,  then $O_K$ (the ring of integers of $K$) is Diophantine over $O_M$.   G. Cornelissen, T. Pheidas and K. Zahidi weakened somewhat assumptions of B. Poonen's theorem. Instead of requiring a rank 1
curve retaining its rank in the extension, they require existence of a rank 1 elliptic curve over the bigger field
and an abelian variety over the smaller field retaining its positive rank in the extension (see \cite{CPZ}).  Further,
B. Poonen and the author have independently shown that the conditions of B. Poonen's theorem can be weakened to
remove the assumption that the rank is one and require only that the rank in the extension is positive and the same as the rank over the ground field
(see \cite{Sh33} and \cite{Po3}).   In \cite{CS} G. Cornelissen and the author of this paper used elliptic curves to define a subfield of a number field using one universal and existential quantifiers.  

Elliptic curves specifically of rank 1 have been used in several papers in connection to discussions of definability and decidability over big subrings of number fields (i.e. subrings where infinitely many, though not all, primes are inverted).  See \cite{Po2}, \cite{PS}, \cite{EE}, \cite{Perlega}, \cite{EES} and \cite{Sh38}.

  Following J. Denef in \cite{Den2}, as has been mentioned above, the author also considered the situations where elliptic curves had finite rank in
infinite extensions and showed that when this happens in a totally real field one can existentially define $\Z$ over the ring of integers of this field
and the ring of integers of any extension of degree 2 of such a field (see \cite{Sh37}).

Recently, in \cite{MR}, B. Mazur and K. Rubin showed that if Shafarevich-Tate conjecture held over a number field $K$, then for any prime degree cyclic extension $M$ of $K$, there existed an elliptic curve of rank one over $K$, keeping its rank over $M$.  Combined with B. Poonen's theorem, this new result shows that Shafarevich -Tate conjecture implied HTP is undecidable over the rings of integers of any number field.  

C. Videla also used finitely generated elliptic curves to produce undecidability results.  His approach, as discussed above, was based on an elaboration by C. W. Henson of a proposition of J. Robinson and  results of D. Rohrlich (see \cite{Rohrlich}) concerning finitely generated elliptic curves in infinite algebraic extensions. 

The main ideas for the proof below have been articulated in \cite{CS} for the number field case.  Here only a minor adjustment is required.  We start with reviewing two technical lemmas which can be found in \cite{Po}.  Let $E$ be an elliptic curve defined over a number field $K$ and fix an affine Weierstrass equation for the curve.  Let $P \in E(K)$ be a point of infinite order, let $n \in \Z_{\not=0}$, and let $(x_n, y_n)$ be the coordinates corresponding to $[n]P$ under the chosen Weierstrass model.  Given $x \in K$, let $\nn(x)$ be the integral divisor which is the numerator of the divisor of $x$ in $K$.  Further let $\dd(x) = \nn(x^{-1})$.

 \begin{lemma}
\label{le:anydivisor}%
Let $\mathfrak A$ be any integral divisor of $K$ and let $m$ be a positive integer.  Then there exists  $k \in \Z_{>0}$ such that $\mathfrak A \Big{|} \dd(x_{km})$ in the integral divisor semigroup of $K$.
\end{lemma}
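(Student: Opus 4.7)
The plan is to analyze the reduction of $E$ at each prime in the support of $\mathfrak A$ and to use the formal group of $E$ at these primes to control the poles of $x_{km}$. Write $\mathfrak A=\prod_{i=1}^{s}\pp_i^{e_i}$ and let $p_i$ denote the rational prime below $\pp_i$. The strategy is, for each $i$, to exhibit a positive integer $N_i$ such that $[N_i]P$ reduces to the identity modulo $\pp_i$ (so that $\pp_i\mid\dd(x_{N_i})$) and then to deepen its position in the formal-group filtration at $\pp_i$ to guarantee a pole of order at least $e_i$. Taking a suitable common multiple of the $N_i$, together with appropriate $p_i$-power factors, will produce the desired $k$.

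For the first step, since $P$ has infinite order while the group of non-singular points $\tilde E_{ns}(\kappa_{\pp_i})$ of the reduction at $\pp_i$ is finite (and likewise for the component group at primes of bad reduction), some positive multiple $N_i$ of $P$ necessarily lies in the kernel of reduction $E_1(K_{\pp_i})$; its formal parameter $t_{N_i}:=-x_{N_i}/y_{N_i}$ then satisfies $\ord_{\pp_i}(t_{N_i})\ge 1$. For the second step, I would invoke the classical fact about the formal group of an elliptic curve: multiplication by the residue characteristic strictly increases the depth of points in the filtration $E_1(K_{\pp_i})\supset E_2(K_{\pp_i})\supset\cdots$, so that $[p_i^{M_i}]\bigl([N_i]P\bigr)$ has formal parameter of $\pp_i$-valuation at least $\lceil e_i/2\rceil$ once $M_i$ is chosen large enough.

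It then suffices to set
\[
k=\prod_{i=1}^{s} N_i\cdot\prod_{i=1}^{s} p_i^{M_i}
\]
with each $M_i$ as above. Then $km$ is divisible by each $N_i$, so $[km]P\in E_1(K_{\pp_i})$ for every $i$, and is also divisible by each $p_i^{M_i}$, forcing $[km]P$ into $E_{\lceil e_i/2\rceil}(K_{\pp_i})$. Using the relation $\ord_{\pp_i}(x_n)=-2\ord_{\pp_i}(t_n)$, valid whenever $[n]P\in E_1(K_{\pp_i})$, this gives $\ord_{\pp_i}(x_{km})\le -e_i$ for every $i$, hence $\mathfrak A\mid\dd(x_{km})$ in the integral divisor semigroup of $K$. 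The step that demands the most care is the bookkeeping inside the formal group, particularly at primes of bad reduction (where one first has to pass through the N\'eron component group into the identity component before the formal-group filtration is available); these are standard local results on elliptic curves, so the argument amounts to patching together classical local statements at the finitely many primes in the support of $\mathfrak A$.
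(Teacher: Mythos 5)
The paper does not actually prove this lemma; it cites it (together with Lemma~\ref{le:equiv}) from Poonen \cite{Po}. Your argument is essentially the standard one used there: push $P$ into the kernel of reduction at each $\pp_i$, then descend through the formal-group filtration using multiplication by the residue characteristic, and use $\ord_{\pp_i}(x_n)=-2\,\ord_{\pp_i}(t_n)$ to translate valuation of the formal parameter into pole order of $x_n$. The pieces you invoke are all correct: $E_1(K_{\pp_i})$ has finite index in $E(K_{\pp_i})$ (via the component group and the finite group $\tilde E_{ns}(\kappa_{\pp_i})$), the $E_j(K_{\pp_i})$ are subgroups so multiplication by the residual factor $r$ cannot raise the valuation of the formal parameter below the filtration level already attained, and $[p_i]$ strictly increases that valuation (from $v([p]t)\ge\min(v(p)+v(t),\,p\,v(t))>v(t)$). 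One small point worth flagging: the identity $\ord_{\pp_i}(x_n)=-2\,\ord_{\pp_i}(t_n)$ is stated for a $\pp_i$-minimal Weierstrass model, whereas the paper fixes a single global affine model that need not be minimal at every $\pp_i$; the discrepancy is a constant shift at each of the finitely many $\pp_i$, absorbed by taking $M_i$ slightly larger, so the argument survives, but a complete write-up should say so. With that caveat, your proof is correct and matches the approach underlying the cited result.
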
%

 \begin{lemma}%
\label{le:equiv}
There exists a positive integer $m$ such that for any positive integers $k,l $,
\[%
\dd(x_{lm}) \Big{|} \nn\left (\frac{x_{lm}}{x_{klm}}-k^2\right)^2
\]%
in the integral divisor semigroup of $K$. 
\end{lemma}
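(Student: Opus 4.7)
The plan is to choose $m$ from the arithmetic of $E$ over $K$, and then turn the asserted divisor divisibility into a local statement proved via the formal group of $E$ at each prime of $K$.

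\textbf{Choice of $m$.} Let $S$ be the finite set of primes of $K$ where $E$ has bad reduction. For each $\pp \in S$ the component group of the special fibre of the N\'eron model of $E$ at $\pp$ is finite, so there exists $m_\pp \in \Z_{>0}$ such that $[m_\pp]P$ reduces into the identity component. Take $m$ to be the least common multiple of the integers $m_\pp$ for $\pp \in S$. With this choice, for every $n$ divisible by $m$ and every prime $\pp$ of $K$, if $\ord_\pp(x_n) < 0$ then $[n]P$ reduces to the identity of the special fibre and lies in the identity component, hence $[n]P \in E_1(K_\pp)$, the kernel of reduction (the formal group).

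\textbf{Local computation.} Fix $k,l \in \Z_{>0}$ and a prime $\pp$ dividing $\dd(x_{lm})$, so $\ord_\pp(x_{lm}) < 0$. Let $O_\pp$ denote the valuation ring of $K_\pp$. By the previous paragraph $[lm]P \in E_1(K_\pp)$, so we work in the formal group $\hat{E}$ with parameter $T = -X/Y$. Setting $t_n = -x_n/y_n$ for $n = lm, klm$, the standard expansions give
\[
\ord_\pp(t_{lm}) = -\tfrac{1}{2}\ord_\pp(x_{lm}) > 0, \qquad \frac{1}{x_n} = t_n^{\,2}(1 + u_n), \quad u_n \in t_n O_\pp.
\]
The formal group law satisfies $[k](T) = kT + O(T^2)$ with coefficients in $O_\pp$, so $t_{klm} = [k](t_{lm}) = k\, t_{lm} + \delta$ for some $\delta \in t_{lm}^{\,2} O_\pp$, forcing $t_{klm} \in t_{lm} O_\pp$ and therefore $u_{klm} \in t_{lm} O_\pp$. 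Combining,
\[
\frac{x_{lm}}{x_{klm}} = \left(\frac{t_{klm}}{t_{lm}}\right)^{\!2} \cdot \frac{1+u_{klm}}{1+u_{lm}} \equiv k^{2} \pmod{t_{lm}\, O_\pp},
\]
which gives $\ord_\pp\!\left(\tfrac{x_{lm}}{x_{klm}} - k^{2}\right) \ge \ord_\pp(t_{lm})$. Squaring,
\[
\ord_\pp\!\left(\left(\tfrac{x_{lm}}{x_{klm}} - k^{2}\right)^{\!2}\right) \ge 2\ord_\pp(t_{lm}) = -\ord_\pp(x_{lm}) = \ord_\pp\bigl(\dd(x_{lm})\bigr).
\]
Running $\pp$ over the support of $\dd(x_{lm})$ packages these local inequalities into the desired divisibility of integral divisors.

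\textbf{Main obstacle.} The delicate point is the uniform choice of $m$ at primes of bad reduction: one must ensure that no multiple $[lm]P$ with a pole in $x$ at some bad $\pp$ reduces into a non-identity component of the N\'eron special fibre, since only on the identity component does the formal group give us access to the parameter $t$ and the expansion $[k](T) = kT + O(T^2)$. The component-group argument handles this but is the one step requiring genuine input from the N\'eron model rather than elementary manipulation of the Weierstrass equation. Once past this, the formal group expansion is routine and the global divisor conclusion is immediate.
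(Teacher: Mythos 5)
The paper does not prove this lemma; it merely cites Poonen's paper \cite{Po}, where it appears (as a lemma in the section on denominators of $x$-coordinates of multiples of a point). So there is no in-paper proof to compare against, and I will evaluate your argument on its own.

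Your formal-group computation in the second paragraph is correct and is the right engine for the lemma: the expansion $1/x_n=t_n^2(1+u_n)$ with $u_n\in t_nO_\pp$, the estimate $t_{klm}=[k](t_{lm})=kt_{lm}+\delta$ with $\delta\in t_{lm}^2O_\pp$, and the resulting $\frac{x_{lm}}{x_{klm}}\equiv k^2\pmod{t_{lm}O_\pp}$ are all fine, and squaring then gives exactly the required local inequality.

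However, the reduction to that computation has a genuine gap. Your choice of $m$ --- killing the image of $P$ in the component group $E(K_\pp)/E_0(K_\pp)$ at the bad primes --- does not do the work you claim. If the fixed Weierstrass equation has coefficients in $O_\pp$ (whether or not $E$ has good reduction at $\pp$, and whether or not the model is minimal), then $\ord_\pp(x_n)<0$ already forces $[n]P$ to reduce to the point at infinity of the (possibly singular) Weierstrass special fibre, which is always a smooth point and is the identity; hence $[n]P$ lies in the kernel of reduction, and the formal group of that integral model applies with $O_\pp$-coefficients. The component group is irrelevant here, and $m=1$ would suffice. Conversely, if the fixed equation has a non-integral coefficient at $\pp$ (or, after passing to the minimal integral model $x=u^2x''+r$, the transformation data $u,r$ have poles), then neither a pole of $x_n$ at $\pp$ nor membership in $E_0(K_\pp)$ gives you the expansion $1/x_n=t_n^2(1+u_n)$ with $u_n\in t_nO_\pp$, and the sentence ``if $\ord_\pp(x_n)<0$ then $[n]P$ reduces to the identity of the special fibre and lies in the identity component, hence $[n]P\in E_1(K_\pp)$'' is essentially assuming what must be proved. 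What is actually needed at these finitely many delinquent primes is a stronger condition on $m$: choose $m_\pp$ so that $[m_\pp]P$ lies sufficiently deep in the filtration $E_1(K_\pp)\supset E_2(K_\pp)\supset\cdots$ (this is possible because each quotient $E(K_\pp)/E_n(K_\pp)$ is finite), deep enough that the coordinate change to a minimal model acts by a fixed valuation shift. Killing only $E/E_0$, as you do, is not enough, and conflating ``in the identity component'' with ``in the kernel of reduction'' hides exactly the step where the argument must be tightened.
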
%

The following proposition can be found in \cite{CS}.  We include its short proof for the convenience of the reader.
 \begin{proposition}%
\label{extensions}
Let $N/K$ be a number field extension of degree $n$.  Let $\mathfrak Q$ be a prime of $K$ and let
$\qq_1, \ldots, \qq_m$ be all the  primes of $N$ lying above $\mathfrak Q$.   Let $u
\in N$ be integral at $\mathfrak Q$.  Assume further  there exists a sequence $\{(k_i, y_i)\}$ where $k_i \in \Z_{>0}$,
$k_{i+1} >k_i$, $y_i \in K$ with $\ord_{\qq_j}y_i \geq 0$ for all $i$ and $j$, and such that for all $i,j$ we
have that $\ord_{\qq_j}(u-y_i) > k_i$.    Then $u \in K$.
\end{proposition}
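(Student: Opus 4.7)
The plan is to pass to the Galois closure and exploit the fact that $y_i \in K$ is fixed by every $K$-automorphism, so each Galois conjugate of $u$ is $\qq$-adically as close to $y_i$ as $u$ is. Concretely, let $\hat N$ be the Galois closure of $N/K$ and set $G=\Gal(\hat N/K)$. Fix any prime $\hat\qq$ of $\hat N$ lying above $\qq_1$. For any $\sigma\in G$, since $y_i\in K$ is fixed by $\sigma$,
\[
\ord_{\hat\qq}\bigl(\sigma(u)-y_i\bigr)=\ord_{\hat\qq}\bigl(\sigma(u-y_i)\bigr)=\ord_{\sigma^{-1}(\hat\qq)}(u-y_i).
\]
The prime $\sigma^{-1}(\hat\qq)$ of $\hat N$ lies above some $\qq_j$ with $j\in\{1,\dots,m\}$ (because the primes of $N$ above $\mathfrak Q$ are exactly $\qq_1,\dots,\qq_m$), and therefore
\[
\ord_{\sigma^{-1}(\hat\qq)}(u-y_i)\geq \ord_{\qq_j}(u-y_i)>k_i.
\]

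Next, for any $\sigma,\tau\in G$ I would write
\[
\sigma(u)-\tau(u)=\bigl(\sigma(u)-y_i\bigr)-\bigl(\tau(u)-y_i\bigr),
\]
and apply the non-archimedean triangle inequality together with the bound just established to conclude that $\ord_{\hat\qq}(\sigma(u)-\tau(u))>k_i$ for every $i$. Since $k_i\to\infty$ and $\sigma(u)-\tau(u)$ is a fixed element of $\hat N$, the only way this can hold is if $\sigma(u)-\tau(u)=0$. Hence every Galois conjugate of $u$ over $K$ equals $u$, so $u$ is fixed by $G$ and therefore lies in $K$.

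The argument is essentially a Krasner-type closeness trick and presents no real obstacle; the only points requiring minor care are (i) observing that $\sigma^{-1}(\hat\qq)$ necessarily sits above one of the listed primes $\qq_j$ (so that the hypothesis on $\ord_{\qq_j}(u-y_i)$ applies), and (ii) noting that the ramification index $e(\sigma^{-1}(\hat\qq)/\qq_j)$ is positive so that the order in $\hat N$ dominates the order in $N$. No quantitative control on $n$ or on the ramification of $\mathfrak Q$ is needed, since we only use that the lower bounds tend to infinity.
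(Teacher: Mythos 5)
Your proof is correct, and it takes a genuinely different route from the paper's. The paper works directly in $N$ with a fixed power basis $\alpha$ of $N/K$ integral at $\mathfrak Q$: it expands $u=\sum_{r=0}^{n-1}a_r\alpha^r$ with $a_r\in K$, uses the discriminant $D$ of the power basis to control denominators (so that $D a_r$ is integral at $\mathfrak Q$ whenever $u-y_i$ is), and then shows $\ord_{\mathfrak Q}a_r$ is unbounded hence $a_r=0$ for $r\geq1$. This is an elementary coordinate computation that never leaves $N$, at the price of a little bookkeeping with $D$ and the approximation element $B$. Your argument instead passes to the Galois closure $\hat N/K$ and observes that, because $y_i$ is $G$-invariant, every conjugate $\sigma(u)$ is $\hat\qq$-adically within $k_i$ of $y_i$ (the key point, which you flag, being that $\sigma^{-1}(\hat\qq)$ lies over some $\qq_j$ and the hypothesis covers all $j$, with $e(\sigma^{-1}(\hat\qq)/\qq_j)\geq1$ only helping). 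The ultrametric inequality then forces all conjugates to coincide, so $u$ lies in the fixed field $K$. This Krasner-style argument is cleaner and avoids the discriminant entirely; it also makes transparent that the integrality hypotheses on $u$ and $y_i$ are not really needed for the conclusion, whereas the paper's proof does use them to invoke the standard integrality of $D a_r$. The one thing the paper's version buys is that it stays inside $N$ and is more "quantitative" (explicitly relating $k_i$ to $n$, $\ell$, and $\ord_{\mathfrak Q}D$), which is irrelevant here since both arguments only need the lower bounds to be unbounded.
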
  %
\begin{proof}%
 Let $\alpha \in N$ be a
generator of $N$ over $K$ such that  $\alpha$ is  integral with respect to $\mathfrak Q$.  Let $D$ be the discriminant of the power basis of $\alpha$.
Using this power basis we can represent any $w \in N$ in the following form:
\[%
w=\sum_{r=0}^{n-1}b_r\alpha^r
\]%
with $Db_r \in K$ and integral at $\mathfrak Q$. Note that for some $a_0, a_1, \ldots, a_{n-1} \in K$ we have that 
\[%
u-y_i=(a_0-y_i) +\sum_{r=1}^{n-1}a_r\alpha^r
\]%
and
\[%
\ord_{\qq_j}(u-y_i) >k_i, j=1,\ldots, m.
\]%
Let $\ell$ be a positive integer and choose $i$ such that $k_i >n(\ell+\ord_{\mathfrak Q}D)$.  In this case 
\[
u-y_i \equiv 0 \mod \mathfrak Q^{\ell+\ord_{\mathfrak Q}D}
\]
in the integral closure of the valuation ring of $\mathfrak Q$ in $N$.  Let $B \in K$ be such that 
\[
\ord_{\mathfrak Q}B =\ell +\ord_{\mathfrak Q}D.
\]
  Observe that $\displaystyle \frac{u-y_i}{B}$ is integral at $\mathfrak Q$,  and therefore $D\frac{a_r}{B}$ is integral at $\mathfrak Q$ implying that $\ord_{\mathfrak Q}a_r \geq \ell$ for $r=1,\dots, n-1$. Since $\ell$ can be arbitrarily large, $a_r=0, r=1,\ldots,n-1$ and $u \in K$.
\end{proof}%
We now use  our results on defining integrality at a single number field prime to obtain the following proposition.
 \begin{theorem}%
Let $\pp_G$ be a  completely $q$-bounded  in $K_{\inff}$ prime of $G$.   If there exists an elliptic curve $E$ defined over $G$ such that $\rank (E(K_{\inff})) >0$ and $E(K_{\inff})=E(G)$, then
$G$ is first-order definable over $K_{\inff}$ with only one variable in the range of the universal quantifier.
\end{theorem}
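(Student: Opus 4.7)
The approach combines three ingredients from earlier in the paper: the existential definition of $\pp_K$-integrality supplied by Theorem \ref{finite}(1) (available because $\pp_G$ is completely $q$-bounded), the divisibility properties of the $x$-coordinates of multiples of a rational point on $E$ (Lemmas \ref{le:anydivisor} and \ref{le:equiv}), and the weak vertical principle of Proposition \ref{extensions}.

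First I would fix a point $P \in E(G)$ of infinite order (which exists since $\rank E(K_{\inff}) > 0$ and $E(K_{\inff}) = E(G)$), write $(x_n, y_n)$ for the coordinates of $[n]P$ in a fixed affine Weierstrass model, and let $m$ be the integer produced by Lemma \ref{le:equiv}. For a fixed factor $\pp_K$ of $\pp_G$ in $K_{\inff}$, Theorem \ref{finite}(1) yields, for each $b \in K_{\inff}$ whose only pole is $\pp_K$ with order prime to $q$, an existential definition of the set $\mbox{Int}(b, \pp_K, q)$. Allowing the pole of $b$ to grow inside the existential block assembles an existentially definable analogue of ``$\pp_K$-integrality''.

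Next, following the sketch in Section~2, I would use the formula $\phi(u)$: for every $z \in K_{\inff}$ there exist $x, y, \hat x, \hat y \in K_{\inff}$ such that $(x, y), (\hat x, \hat y)$ lie on $E$ and both $1/(zx)$ and $x(u^2 - x/\hat x)^2$ are $\pp_K$-integral in the above sense. The two integrality conditions force $\ord_{\pp_K}(x) \leq -\ord_{\pp_K}(z)$ and $2\ord_{\pp_K}(u^2 - x/\hat x) \geq -\ord_{\pp_K}(x)$, which combine to $\ord_{\pp_K}(u^2 - x/\hat x) \geq \tfrac{1}{2}\ord_{\pp_K}(z)$. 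Since $E(K_{\inff}) = E(G)$, we have $x, \hat x \in G$, so $x/\hat x \in G$ approximates $u^2$ at $\pp_K$ to arbitrary precision as $z$ varies, forcing $u^2 \in G$ by Proposition \ref{extensions}. Conversely, for $k \in \Z$, Lemma \ref{le:anydivisor} supplies $l$ such that $\dd(x_{lm})$ dominates any required $\pp_K$-power, and Lemma \ref{le:equiv} then witnesses $\phi(k)$ via $x := x_{lm}$ and $\hat x := x_{klm}$. Thus $\phi$ cuts out a set $S$ with $\Z \subseteq S \subseteq \{u \in K_{\inff}:  u^2 \in G\}$. Intersecting with the existentially definable $\pp_K$-integers pins down $\Z$ itself, and from $\Z$ one passes to $\Q$ by quotients and then to $G$ as the $\Q$-span of a fixed primitive element, all under the same single universal quantifier $\forall z$.

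The main technical obstacle is keeping the universal block to a single variable: the $\pp_K$-integrality predicate is itself only existentially defined with auxiliary parameters, so one must verify that the parameters for each integrality condition, together with the two $E$-points and the $k, l$ data, can all be packed inside one $\exists$-block indexed by $z$. A secondary delicate point is the precision accounting in Lemma \ref{le:equiv}: the $\dd(x_{lm})$ produced by Lemma \ref{le:anydivisor} must be controlled so that $\ord_{\pp_K}(x_{lm}/x_{klm} - k^2)$ actually grows with $\ord_{\pp_K}(z)$ and not just with some fixed power, and the final bootstrap $\Z \rightsquigarrow \Q \rightsquigarrow G$ must remain existential so as not to introduce a second universal quantifier.
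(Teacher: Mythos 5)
Your overall architecture matches the paper's Section~7 proof --- Theorem~\ref{finite}(1) for a $\pp$-integrality predicate, the divisibility Lemmas~\ref{le:anydivisor} and~\ref{le:equiv}, Proposition~\ref{extensions} for the weak vertical step, and then a bootstrap to $G$ --- but the final bootstrap as you have written it has a genuine gap, and the root cause is the choice of $u^2$ rather than $u$ in the formula.

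You write $\phi(u)$ with the expression $x(u^2 - x/\hat x)^2$, which (after Proposition~\ref{extensions}, applied to $u^2$) yields $u^2\in G$, not $u\in G$. So your sandwich is $\Z\subseteq S\subseteq\{u\in K_{\inff}\mid u^2\in G\}$, and the outer set is \emph{not} contained in $G$ (e.g.\ it contains any square root of a $G$-element that happens to lie in $K_{\inff}$). Nothing in $\phi$ nor in $\pp_K$-integrality forces an element of $S\cap\{\pp_K\text{-integers}\}$ to lie in $\Z$, or even in $G$: the claim that ``intersecting with the existentially definable $\pp_K$-integers pins down $\Z$ itself'' is unjustified, and the subsequent $\Z\rightsquigarrow\Q\rightsquigarrow G$ passage does not go through.

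The paper's proof avoids this: the formula uses $(u - a_1/a_2)^2 a_1$, i.e.\ $u$ in place of $u^2$, together with an auxiliary integrality hypothesis ($ub\in \mbox{Int}(b,\pp_G,q)$) so that Proposition~\ref{extensions} applies to $u$ directly and gives $u\in G$. Combined with the converse direction (for $u=k^2$, take $a_1=x_{rm},a_2=x_{krm}$) this gives a sandwich $\{k^2:k\in\Z\}\subseteq S\subseteq G$, with \emph{both} bounds inside $G$. The bootstrap then never tries to identify $\Z$ exactly; instead it exploits that a sandwich between $\Q$ and $G$ closes up under forming $\Q$-linear combinations of a fixed power basis of $G/\Q$: sums of four elements of $S$ are squeezed between $\Z_{\geq 0}$ (Lagrange) and $G$; differences and quotients squeeze the result between $\Q$ and $G$; and linear combinations with the powers of a primitive element $\alpha$ of $G/\Q$ are squeezed between $G$ and $G$, i.e.\ give exactly $G$. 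All the extra quantifiers introduced in this bootstrap are existential, and the finitely many $\forall z$ blocks coming from the finitely many instances of $\phi$ can be merged into a single $\forall z$, yielding the advertised $\exists\cdots\exists\,\forall\,\exists\cdots\exists$ form.

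To repair your argument with minimal change: replace $u^2$ by $u$ in $\phi$ (and add the integrality hypothesis on $u$, e.g.\ $ub\in\mbox{Int}(b,\pp_G,q)$, before the universal quantifier so Proposition~\ref{extensions} is applicable), note $\{k^2:k\in\Z\}\subseteq S\subseteq G$, and then run the sandwich bootstrap above rather than attempting to isolate $\Z$. Alternatively, keep $u^2$ but bootstrap from $\{s^2: s\in S\}$ (an existential image of $S$), which \emph{is} squeezed between $\{k^2:k\in\Z\}$ and $G$; this is the correct reading of the informal sketch in Section~2. Your concerns about packing the auxiliary existential parameters for each integrality condition into a single block, and about the precision accounting in Lemma~\ref{le:equiv}, are legitimate but are handled in the paper exactly as you anticipate.
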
%
\begin{proof}%
Fix an affine Weierstrass equation $y^2=x^3+ax+c$ for $E$ and identify non-zero points of $E(K_{\inff})$ with pairs of solutions to the Weierstrass equations as above.  Let $b \in K_{\inff}$ be such that it satisfies conditions of Theorem \ref{finite}, Part 1 with respect to all prime factors of $\pp_G$ in $M(b)$, i.e. $\ord_{\pp_{M(b)}}b<0$ and $\ord_{\pp_{M(b)}}b\not \equiv 0 \mod q$ for all $\pp_{M(b)} \in \calC_{M(b)}(\pp_G)$, where $M$ is a completely bounding field for $\pp_G$.  Let $u \in K_{\inff}$ be such that $ub \in Int(b,\pp_G, q)$ and 
\[%
\forall z \in K_{\inff} \exists (a_1, b_1), (a_2, b_2)  \in E(K_{\inff}):
\]%
\begin{equation}
\label{elliptic}
\frac{b^2}{za_1} \in Int(b,\pp_G, q) \land (u-\frac{a_1}{a_2})^2a_1 \in Int(b,\pp_G, q).
\end{equation}
 We claim that if  the formula is true for some $u \in N=M(b,u)$, then,    by
Proposition \ref{extensions}, we have that $u \in G$.  Indeed, given a $z \in N$ and $\displaystyle \frac{b^2}{za_2} \in Int(b,\pp_G, q)$, we have that for all $\pp_{N}$ lying above $\pp_{G}$, it is the case that 
\[
\ord_{\pp_N}\frac{b^2}{za_1} > \left( \frac{q-1}{q}\right )\ord_{\pp_N}b
\]
implying
\[
-\ord_{\pp_N}z +\ord_{\pp_N}\frac{1}{a_1} > \left (\frac{q-1}{q}-2\right )\ord_{\pp_N}b =\left (-1-\frac{1}{q}\right )\ord_{\pp_N}b > -\ord_{\pp_N}b >0. 
\]
 So that we have 
 \[
 \ord_{\pp_N}\frac{1}{a_1} > \ord_{\pp_N}z -\ord_{\pp_N}b > \ord_{\pp_N}z.
 \]
 The second part of the conjunction in \eqref{elliptic} now implies
 \[
 \ord_{\pp_N}(u-\frac{a_1}{a_2})^2a_1 >\frac{q-1}{q}\ord_{\pp_N}b,
 \]
\[
2 \ord_{\pp_N}(u-\frac{a_1}{a_2})>\frac{q-1}{q}\ord_{\pp_N}b + \ord_{\pp_N}\frac{1}{a_1} >\frac{q-1}{q}\ord_{\pp_N}b -\ord_{\pp_N}b +\ord_{\pp_N}z>\ord_{\pp_N}z .
 \]
 Since $z$ can be any element of $N$ and $\frac{a_1}{a_2} \in G$, it follows at once from Proposition \ref{extensions} that $u \in G$.

Now assume that $u=k^2$ with $ k\in\Z$.    Let $(x_1,y_1) \in E(G)$ be the
affine coordinates with respect to a chosen Weierstrass equation  of a point $P \in E(G)$  of infinite order, as above.   Then
by Lemma \ref{le:equiv} there exists a positive integer $m$ such that for any positive integer
$l $,
\[%
\dd(x_{lm}) \Big{|} \nn\left (\frac{x_{lm}}{x_{klm}}-k^2\right)^2
\]%
in the integral divisor semigroup of $G$.  Further, by Lemma \ref{le:anydivisor} we have that for any positive $C$, for some  $r$ it is the case
that $\ord_{\pp_N}x_{rm} < -C$ for any $\pp_N$.  So given a $z \in K_{\inff}$, let $a_1=x_{rm}, a_2=x_{krm}$ with $r$ chosen so that $\dd(b^{2})\nn(z) | \dd(x_{rm})$ in the integral divisor semigroup of $G(b,z)$ and observe that the first part of the Conjunction \eqref{elliptic} is satisfied.  Next we note that for $N=G(b,z)$, since $\ord_{\pp_N}b <0$ we have that $\ord_{\pp_N}x_{rm}<0$, and since 
\[
\dd(x_{rm}) \Big{|} \nn\left (\frac{x_{rm}}{x_{krm}}-k^2\right)^2,
\]
 we also must have that 
\[
\ord_{\pp_N}\left (\left (\frac{x_{rm}}{x_{krm}}-k^2\right )^2x_{rm}\right ) \geq 0
\]
 and thus the second part of the conjunction \eqref{elliptic} is satisfied.

Finally we note that any positive integer can be
written as a sum of four squares, and any element of $G$ can be expressed as a linear combination of
some basis elements with rational coefficients.  The resulting formula for $G$ is of the form $\exists \ldots \exists \forall \exists \ldots \exists P$, where $P$ is a polynomial equation.
\end{proof}%
In view of the above proposition we have the following theorem.
\begin{theorem} 
\label{thm elliptic}
Let $q$ be a rational prime and let $K_{\inff}$ be an infinite algebraic extension of $\Q$ with at least one prime of a number field contained in $\Q$ completely $q$-bounded.  Assume also there exist an elliptic curve defined over $K_{\inff}$ such that its Mordell-Weil group has positive rank and is finitely generated.  In this case $\Z$ is first-order definable over this field and there fore the first-order theory of this field is undecidable.
\end{theorem}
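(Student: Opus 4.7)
The plan is to reduce Theorem \ref{thm elliptic} to the preceding theorem, which already defines a number field $G$ inside $K_{\inff}$ provided that $G$ carries both a completely $q$-bounded prime and an elliptic curve whose Mordell--Weil group does not grow upon passage to $K_{\inff}$. Once such a $G$ has been first-order defined inside $K_{\inff}$, one invokes J.~Robinson's classical result (\cite{Rob1,Rob2}) to first-order define $\Z$ inside the number field $G$, and composing the two definitions yields a first-order definition of $\Z$ in $K_{\inff}$. Undecidability of the first-order theory of $K_{\inff}$ then follows from the well-known undecidability of the first-order theory of $\Z$.

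The only real work is to enlarge the base field so that the hypotheses of the previous theorem are satisfied simultaneously. Let $G_0 \subset K_{\inff}$ be a number field carrying the completely $q$-bounded prime $\pp_0$, and let $G_1 \subset K_{\inff}$ be a number field large enough to contain a fixed Weierstrass model for $E$ together with a finite generating set for the (by assumption) finitely generated Mordell--Weil group $E(K_{\inff})$. Set $G = G_0 G_1$, still a number field inside $K_{\inff}$. Then $E$ is defined over $G$ and $E(G) \supseteq E(K_{\inff})$ by the choice of $G_1$, while trivially $E(G) \subseteq E(K_{\inff})$, so $E(G) = E(K_{\inff})$ and this group has positive rank.

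It remains to check that some factor $\pp_G$ of $\pp_0$ in $G$ is completely $q$-bounded in $K_{\inff}$. Let $M \in I_{G_0}$ be a completely $q$-bounding field for $\pp_0$, and set $M' = MG$. For any $K \in I_{M'}$ and any $\pp_K \in \calC_K(\pp_G)$, multiplicativity of local degrees gives
\[
d(\pp_K/\pp_{M'}) \cdot d(\pp_{M'}/\pp_M) = d(\pp_K/\pp_M),
\]
and since $\ord_q d(\pp_K/\pp_M) = 0$ by the defining property of $M$, both factors on the left are prime to $q$. Hence $M'$ is a completely $q$-bounding field for $\pp_G$, confirming that $\pp_G$ is completely $q$-bounded in $K_{\inff}$.

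With these verifications in place, the preceding theorem produces a first-order formula $\Psi(x)$ over $K_{\inff}$ whose solution set equals $G$, and J.~Robinson supplies a first-order formula $\Phi(x)$ in the language of rings whose solution set inside $G$ equals $\Z$. The composed formula $\Psi(x) \wedge \Phi(x)$ (with any existentially quantified parameters in $\Phi$ additionally required to satisfy $\Psi$) defines $\Z$ inside $K_{\inff}$, and undecidability of the first-order theory of $K_{\inff}$ follows. The only potential obstacle is the verification that complete $q$-boundedness survives the finite base-field enlargement from $G_0$ to $G$, and as shown above this is an immediate consequence of the multiplicativity of local degrees; no genuinely new ideas beyond those already developed for the previous theorem are required.
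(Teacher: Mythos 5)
Your proposal is correct and fills in exactly the reduction the paper leaves implicit (the paper merely says ``In view of the above proposition we have the following theorem''): enlarge to a number field $G$ containing both the completely $q$-bounded prime and the generators of the finitely generated Mordell--Weil group, verify via multiplicativity of local degrees that complete $q$-boundedness persists, apply the preceding theorem to define $G$ in $K_{\inff}$, and then compose with J.~Robinson's definition of $\Z$ in $G$. The only small imprecision is in the final relativization step: to transport Robinson's formula $\Phi$ from $G$ to $K_{\inff}$ you must restrict \emph{all} quantified variables (universal as well as existential) to the set defined by $\Psi$, not just the existential ones, but this is routine and does not affect the soundness of the overall argument.
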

This theorem provides another way to improve results due to C.7 Videla in \cite{V3}, where finitely generated elliptic curves are used over cyclotomics with one ramified rational prime to generate a model of $\Z$ using results of Julia Robinson.  Using these elliptic curves as described above we would also get the first-order definition of $\Z$ as a subset.  

    Another example of a family of infinite extensions of $\Q$ where one can find finitely generated elliptic curves can be found in \cite{Sh37} where the curves are used to prove existential undecidability of rings of integers.  One should note that the fields described in that paper are all $q$-bounded with respect to almost all rational primes and thus one could also derive the results on the first-order undecidability of these fields using the norm equation method above.  In general the full strength of the elliptic curves method is unknown since we don't have the complete picture concerning elliptic curves in infinite algebraic extensions of $\Q$.  

One can also use Theorem \ref{thm elliptic} to obtain information about existence of finitely generated curves in infinite extensions.  If an infinite extension of $\Q$ with a completely $q$-bounded prime has a decidable first-order theory, then our theorem implies that any elliptic curve defined over the field either has rank 0 or is not finitely generated.    An example of such a field, pointed out to us by Moshe Jarden, can be found in \cite{Er5}.  Fix a prime number $p$ and consider the field of all algebraic numbers $\Q^{alg}_p$ contained in $\Q_p$, the $p$-adic completion of $\Q$.  The field $\Q^{alg}_p$ is not fixed under conjugation and we can set $K_{\inff}$ to be the intersection of all the conjugates of $\Q^{alg}_p$ over $\Q$.  Ershov showed that the first-order theory of such a field is decidable.  Further, $p$ splits completely in every finite extension contained in such a $K_{\inff}$ and therefore it is $q$-bounded for any $q$.

\end{document}